\providecommand{\U}[1]{\protect\rule{.1in}{.1in}}
\theoremstyle{definition}
\theoremstyle{remark}
\numberwithin{equation}{section}
\newtheorem{theorem}{Theorem}[section]
\newtheorem*{theorem*}{Theorem}
\newtheorem{lemma}{Lemma}[section]
\newtheorem*{lemma*}{Lemma}
\newtheorem{corollary}{Corollary}[section]
\newtheorem{remark}{Remark}[section]
\newtheorem{proposition}{Proposition}[section]
\newtheorem{definition}{Definition}[section]
\let\pdfoutput=\undefined\fi
\begin{document}
\title[Existence of Conic Sasaki-Einstein Metrics ]{On the Existence of Conic Sasaki-Einstein Metrics on Log Fano Sasakian
Manifolds of Dimension Five}
\author{$^{\ast}$Shu-Cheng Chang}
\address{$^{\ast}$Mathematical Science Research Center, Chongqing University of
Technology, 400054, Chongqing, P.R. China}
\email{scchang@math.ntu.edu.tw}
\author{$^{\dag}$Fengjiang Li}
\address{$^{\dag}$Mathematical Science Research Center, Chongqing University of
Technology, 400054, Chongqing, P.R. China}
\email{lianyisky@163.com}
\author{$^{\dag\dag}$Chien Lin}
\address{$^{\dag\dag}$Mathematical Science Research Center, Chongqing University of
Technology, 400054, Chongqing, P.R. China}
\email{chienlin@cqut.edu.cn}
\author{$^{\ast\ast}$Chin-Tung Wu}
\address{$^{\ast\ast}$Department of Applied Mathematics, National Pingtung University,
Pingtung 90003, Taiwan}
\email{ctwu@mail.nptu.edu.tw }
\thanks{$^{\ast}$Shu-Cheng Chang is partially supported by Funds of MSRC, CQUT
0625199005. $^{\ast\ast}$Chin-Tung Wu is partially supported by NSTC
112-2115-M-153-001. $^{\dag}$Fengjiang Li is partially supported by NSFC
12301062, CQUT 2021ZDZ023 and CQEC KJZD-K202201102. $^{\dag\dag}$Chien Lin is
partially supported by KJQN202201165 and QN2022035003L}

\begin{abstract}
In this paper, we derive the uniform $L^{4}$-bound of the transverse conic
Ricci curvature along the conic Sasaki-Ricci flow on a compact transverse log
Fano Sasakian manifold $M$ of dimension five and the space of leaves of the
characteristic foliation is not well-formed. Then we first show that any
solution of the conic Sasaki-Ricci flow converges in the Cheeger-Gromov sense
to the unique singular orbifold conic Sasaki-Ricci soliton on
$M_{\mathrm{\infty}\text{ }}$which is a $\mathbb{S}^{1}$-orbibundle over the
unique singular conic K\"{a}hler-Ricci soliton on a log del Pezzo orbifold
surface. As a consequence, there exists a K\"{a}hler-Ricci soliton orbifold
metric on its leave space which is a log del Pezzo orbifold surface. Second,
we show that the conic Sasaki-Ricci soliton is the conic Sasaki-Einstein if
$M$ is transverse log $K$-polystable. In summary, we have the existence
theorems of orbifold Sasaki-Ricci solitons and Sasaki-Einstein metrics on a
compact quasi-regular Sasakian manifold of dimension five.

\end{abstract}
\keywords{Conic Sasaki-Ricci flow, Conic Sasaki-Ricci soliton, Log Fano Sasakian
manifold, Log del Pezzo orbifold surface, Log Sasaki-Mabuchi $K$-energy, Log
Sasaki-Donaldson-Futaki invariant, Transverse log $K$-polystable.}
\subjclass{Primary 53E50, 53C25; Secondary 53C12, 14E30.}
\maketitle
\tableofcontents

\section{Introduction}

Let $(M,\eta,\xi,\Phi,g)$ be a compact Sasakian manifold of dimension $2n+1.$
If the orbits of the Reeb vector field $\xi$ are all closed and hence circles,
then integrates to an isometric $U(1)$ action on $(M,g)$. Since it is nowhere
zero this action is locally free, that is, the isotropy group of every point
in $S$ is finite. If the $U(1$) action is in fact free then the Sasakian
structure is said to be regular. Otherwise, it is said to be quasi-regular. If
the orbits of $\xi$ are not all closed,\textbf{\ }the Sasakian structure is
said to be irregular. However, by the second structure theorem (\cite{ru},
\cite{bg}), any Sasakian structure $(\xi,\eta,\Phi)$ on $(M,g)$ is either
quasi-regular or there is a sequence of quasi-regular Sasakian structures
$(M,\xi_{i},\eta_{i},\Phi_{i},g_{i})$ converging in the compact-open
$C^{\infty}$-topology to $(\xi,\eta,\Phi,g).$ It means that there always
exists a quasi-regular Sasakian structure $(\xi,\eta,\Phi)$ on $(M,g)$.

A Sasakian $(2n+1)$-manifold is served as the odd-dimensional analogous of
K\"{a}hler manifolds. For instance, the K\"{a}hler cone of a Sasaki-Einstein
$5$-manifold is a Calabi-Yau $3$-fold. It provides interesting examples of the
AdS/CFT correspondence. On the other hand, the class of simply connected,
closed, oriented, smooth $5$-manifolds is classifiable under diffeomorphism
due to Smale-Barden \cite{s}, \cite{b}.

In a compact quasi-regular Sasakian manifold, the unit Reeb vector field
induces a\textbf{\ }$\mathbb{S}^{1}$-action which generates the finite
isotropy groups. It is the regular free action if the isotropy subgroup of
every point is trivial. In general, as in \cite{clw}, the space of leaves has
at least the codimension two fixed point set of every non-trivial isotropy
subgroup or the codimension one fixed point set of some non-trivial isotropy subgroup.

In a series of previous articles, it is our goal to address the related issues
on the geometrization and classification problems of Sasakian manifolds of
dimension five of Type I foliation singularities (\cite{chlw}, \cite{clw},
\cite{cchlw}) in the sense that the space of leaves has at least codimension
two fixed point set of every non-trivial isotropy subgroup.

In this article, we deal with the old dimensional counterpart of
Yau-Tian-Donaldson conjecture (\cite{cds1}, \cite{cds2}, \cite{cds3},
\cite{t5}) for conic Sasaki-Einstein metrics on a compact Sasakian manifolds
of dimension five of Type II foliation singularities in the sense that the
space of leaves admits the codimension one fixed point set of some non-trivial
isotropy subgroup. It can be viewed as a Sasaki analogue of Liu-Zhang
\cite{lz} for the conic K\"{a}hler-Ricci flow (\ref{2023}).

We recall the starting issue from our previous paper. We assume that $M$ is a
compact quasi-regular transverse Fano $5$-Sasakian manifold and the space $Z$
of leaves is a del Pezzo orbifold surface of cyclic quotient singularities
(well-formed) which means its orbifold singular locus and algebro-geometric
singular locus coincide, equivalently $Z$ has no branch divisors. Then we have

\begin{proposition}
\label{P11} (\cite{cchlw}) Let $(M,\xi,\eta_{0},g_{0})$ be a compact
quasi-regular transverse Fano Sasakian manifold of dimension five and
$(Z_{0}=M/\mathcal{F}_{\xi},h_{0},\omega_{h_{0}})$ denote the space of leaves
of the characteristic foliation which is a del Pezzo orbifold surface of
cyclic quotient klt singularities with codimension two orbifold singularities
$\Sigma_{0}$. Then, by a $D$-homothetic deformation, under the Sasaki-Ricci
flow
\[
\left\{
\begin{array}
[c]{lcl}%
\frac{\partial}{\partial t}\omega(t) & = & -\mathrm{Ric}^{\mathrm{T}}%
(\omega(t))+\omega(t),\\
\omega(0) & = & \omega_{0}.
\end{array}
\right.
\]
$(M(t),\xi,\eta(t),g(t))$ converges to a compact quasi-regular transverse Fano
Sasaki-Ricci soliton orbifold $(M_{\infty},\xi,\eta_{\infty},g_{\infty})$ with
the leave space of a del Pezzo orbifold surface $(Z_{\infty}=M_{\infty
}/\mathcal{F}_{\xi},h_{\infty})$ which can have at worst codimension two klt
orbifold singularities $\Sigma_{\infty}.$ Furthermore, $g^{T}(t_{i})$
converges to a Sasaki-Ricci soliton orbifold metric $g_{\infty}^{T}$ on
$M_{\infty}$ with $g_{\infty}^{T}=\pi^{\ast}(h_{\infty})$ such that
$h_{\infty}$ is the smooth gradient K\"{a}hler-Ricci soliton metric in the
Cheeger-Gromov topology on $Z_{\infty}\backslash\Sigma_{\infty}.$ In
additional, if $M$ is transverse $K$-stable, then $M(t)$ converges to a
compact transverse Fano Sasakian manifold $M_{\infty}$ which is isomorphic to
$M$ endowed with a smooth Sasaki-Einstein metric.
\end{proposition}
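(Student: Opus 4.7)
The plan is to reduce the Sasaki-Ricci flow on $(M,\xi,\eta(t),g(t))$ to the transverse K\"{a}hler-Ricci flow on the leaf space $Z_{0}$, which is a log del Pezzo orbifold with only codimension-two cyclic quotient klt singularities, and then invoke orbifold analogues of the K\"{a}hler-Ricci flow convergence theory on Fano manifolds (Perelman, Sesum-Tian, Tian-Zhang, Tian-Zhu).

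First, the $D$-homothetic deformation normalizes the Sasakian structure so that the flow $\partial_{t}\omega=-\mathrm{Ric}^{\mathrm{T}}(\omega)+\omega$ preserves the transverse K\"{a}hler class $c_{1}^{B}(\mathcal{F}_{\xi})$. Since the $\mathbb{S}^{1}$-action generated by $\xi$ has only finite isotropy with fixed-point set of codimension $\geq 2$, the transverse flow descends to an orbifold K\"{a}hler-Ricci flow on $Z_{0}$ in the class $c_{1}^{orb}(Z_{0})$. Long-time existence follows from the transverse analogue of Cao's theorem adapted to the well-formed orbifold setting.

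Second, I would establish Perelman-type estimates on the orbifold level: a uniform $C^{0}$ bound for the transverse scalar curvature $R^{\mathrm{T}}$, a uniform diameter bound for $(Z_{0},h(t))$, and non-collapsing of geodesic balls. These come from the monotonicity of Perelman's $\mathcal{W}$- and $\mu$-functionals transplanted to the transverse setting, together with a transverse Sobolev inequality that tolerates the cyclic quotient singularities. By Cheeger-Gromov compactness for K\"{a}hler orbifolds (Anderson, Tian, Chen-Wang) a subsequence of $(Z_{0},h(t_{i}))$ converges in the Cheeger-Gromov sense to a K\"{a}hler-Ricci soliton metric $h_{\infty}$ on a limit log del Pezzo orbifold $Z_{\infty}$ with at worst codimension-two klt singularities $\Sigma_{\infty}$. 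The partial $C^{0}$ estimate (Donaldson-Sun, Tian) identifies $Z_{\infty}$ as a normal $\mathbb{Q}$-Fano variety and shows $h_{\infty}$ is smooth on $Z_{\infty}\backslash\Sigma_{\infty}$; uniqueness of K\"{a}hler-Ricci solitons up to automorphism (Tian-Zhu, Berndtsson) makes the limit canonical. Lifting via the Boothby-Wang construction over $Z_{\infty}\backslash\Sigma_{\infty}$ and extending across the codimension-two orbifold locus yields the limit Sasakian structure $(M_{\infty},\xi,\eta_{\infty},g_{\infty})$.

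Finally, under the additional hypothesis of transverse $K$-stability, the modified transverse Futaki invariant vanishes on every holomorphic vector field on $Z_{\infty}$, forcing the soliton drift field to be trivial, so $h_{\infty}$ is K\"{a}hler-Einstein; stability rules out degeneration of the complex structure, hence $Z_{\infty}\cong Z_{0}$ and $M_{\infty}\cong M$. The main obstacle is controlling the limit orbifold structure: verifying that only codimension-two klt singularities persist (no codimension-one branch divisor can form under the flow) and pushing the partial $C^{0}$ estimate across the orbifold singular locus. This is precisely where the hypothesis that $Z_{0}$ is well-formed with codimension-two fixed-point set of the $\mathbb{S}^{1}$-action is essential, and it is what prevents this Type I situation from sliding into the genuinely conic Type II regime treated in the present paper.
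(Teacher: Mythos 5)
Your overall scheme (descend to the leaf space, run the orbifold K\"ahler--Ricci flow, extract a soliton limit, lift back by Boothby--Wang, and use stability to kill the soliton field) is the natural one, but the central analytic step is asserted rather than proved, and it is exactly the step the cited proof in \cite{cchlw} (and its conic analogue developed in Section~3 of this paper) is built around. Perelman's estimates give uniform bounds on the (transverse) scalar curvature, the diameter and a non-collapsing constant, but no pointwise or integral control on the Ricci curvature; Cheeger--Gromov compactness for K\"ahler orbifolds in the sense of Anderson--Tian does not apply under these hypotheses, and concluding that a subsequence of $(Z_0,h(t_i))$ converges to a K\"ahler--Ricci soliton orbifold with codimension-two singular set is precisely the (orbifold) Hamilton--Tian conjecture, which is not off-the-shelf. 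The actual argument first establishes a uniform $L^{4}$ bound on the transverse Ricci curvature along the flow (the analogue of Theorem~\ref{T31} here; this is where the dimension-five/complex-surface restriction enters), and it is this integral bound that makes the $L^{p}$ version of the Cheeger--Colding--Tian structure theory (Petersen--Wei, Tian--Zhang, Tian--Wang) applicable, yielding a limit length space whose singular set has codimension at least two; Perelman's pseudolocality then upgrades the convergence to smooth convergence on the regular part, and the partial $C^{0}$-estimate (proved upstairs via H\"ormander $L^{2}$-estimates for basic sections of $(K_M^T)^{-m}$) identifies the limit as a normal projective orbifold with klt singularities. Without the $L^{4}$ Ricci bound, or an explicit appeal to and orbifold extension of Chen--Wang/Bamler, your compactness step has no support, so the proposal as written has a genuine gap.

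A secondary point: the proposition asserts convergence of the Sasakian data $(M(t),\xi,\eta(t),g(t))$ and of the transverse metrics $g^{T}(t_i)$ with $g^{T}_{\infty}=\pi^{\ast}(h_{\infty})$, not merely convergence of the quotient metrics. The cited proof works upstairs with basic/transverse quantities throughout (transverse balls, transverse distance, $\mathbb{S}^1$-equivariant embeddings), so the Sasakian convergence comes for free from the first structure theorem; in your downstairs formulation the lift "via Boothby--Wang and extension across the codimension-two locus" needs an argument that the contact forms and potentials converge as well, which you only gesture at. Your treatment of the $K$-stable case (vanishing of the Futaki invariant of the limit, no jumping of the transverse holomorphic structure) is in the right spirit, but in the paper this too is routed through the lower bound of the Sasaki--Mabuchi $K$-energy obtained from the partial $C^{0}$-estimate and Paul's theorem, so it again sits downstream of the missing integral curvature bound.
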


Now we focus on a compact quasi-regular Sasakian $5$-manifold of the Type II
foliation singularities. It means that its leave space $Z$ has the orbifold
structure $\mathcal{Z=}(Z,\Delta)$ with the codimension one fixed point set of
some non-trivial isotropy subgroup with branch divisors $\Delta$. Then, up to
conjugation, a representation $\mu_{p}$ of $\mathbb{Z}_{r}$ on $\mathbb{C}%
^{2}$ is conjugate to the $\mathbb{S}^{1}$-action
\[%
\begin{array}
[c]{c}%
\mu_{\mathbb{Z}_{r}}:(z_{1},z_{2})\rightarrow(e^{2\pi\sqrt{-1}\frac{a_{1}%
}{r_{1}}}z_{1},e^{2\pi\sqrt{-1}\frac{a_{2}}{r_{2}}}z_{2})
\end{array}
\]
for some positive integers $r_{1,}$ $r_{2}$ whose least common multiplier is
$r$, and $a_{i},$ $i=1,$ $2$ are integers coprime to $r_{i},$ $i=1,$ $2$. By
the first structure theory as in Proposition \ref{P21}, the foliation singular
set contains some $3$-dimensional Sasakian submanifolds of $M$, the
corresponding foliation singularities $\Delta^{T}$ in $(M,\Delta^{T}%
)$\textbf{\ }is the Hopf $\mathbb{S}^{1}$-orbibundle over a Riemann surface
$\Sigma_{h}.$ We refer to section $2$ in some detail (\cite{clw}).

More precisely, suppose that $(M,\xi,\eta,g)$ is a compact quasi-regular
Sasakian $5$-manifold. Then, by the first structure theorem, its leave space
of the characteristic foliation is a log del Pezzo orbifold surface
$(Z,\Delta)$ with klt singularities and
\[%
\begin{array}
[c]{c}%
\Delta=\sum_{\alpha=1}^{k}(1-\frac{1}{m_{\alpha}})\Delta_{\alpha},
\end{array}
\]
where $m_{\alpha}$ is called the ramification index of $\Delta_{\alpha}$.
Moreover, for its submersion
\[
\pi:(M,\Delta^{T},g,\omega)\rightarrow(Z,\Delta,h,\omega_{h})
\]
with the transverse K\"{a}hler structure $\frac{1}{2}d\eta=\pi^{\ast}%
(\omega_{h})=\omega$ and the transverse divisor (\cite{chlw}) $\Delta_{\alpha
}^{T}$ such that $\Delta^{T}=\pi_{\ast}^{-1}(\Delta)$ with
\[%
\begin{array}
[c]{c}%
\Delta^{T}=\sum_{\alpha=1}^{k}(1-\frac{1}{m_{\alpha}})\Delta_{\alpha}^{T}%
\end{array}
\]
and the canonical (determinant) bundle $K_{M}^{T}$ of $T^{1,0}(M)$ which is a
basic CR holomorphic line bundle (\cite{clw})
\[
K_{M}^{T}=\pi^{\ast}(K_{Z}^{orb})=\pi^{\ast}(K_{Z}+\Delta).
\]

However, it follows from Remark \ref{R21} that if there is a Sasaki-Einstein
metric $\omega$ on a log Fano quasi-regular\textbf{\ }Sasakian manifold
$(M,\Delta^{T})$ with klt foliation singularities, then $\widetilde{\phi
}^{\ast}\omega$ will be a so-called conic Sasaki-Einstein metric on a log Fano
regular Sasakian manifold $\widetilde{M}$\ with the transverse cone angle
$2\pi(1-\beta)$ along $D^{T}$ for a log resolution $\widetilde{\phi
}:(\widetilde{M},D^{T})\rightarrow(M,\Delta^{T}).$ Therefore we can turn
around to use this to construct a Sasaki-Einstein metric on a log Fano
quasi-regular Sasakian manifold $(M,\Delta^{T})$.

In view of the observation above, instead we shall work on the conic
Sasaki-Ricci flow
\begin{equation}
\left\{
\begin{array}
[c]{lcl}%
\frac{\partial}{\partial t}\omega(t) & = & -\mathrm{Ric}^{\mathrm{T}}%
(\omega(t))+\beta\omega(t)+(1-\beta)[D^{T}]\\
\omega(0) & = & \omega_{\ast}%
\end{array}
\right.  \label{2023}%
\end{equation}
in a compact regular transverse Fano Sasakian manifold $(M,\xi,\eta_{0}%
,\Phi_{0},g_{0},\omega_{0})$ of dimension five (subsection $2.1$ and $2.2$).
More precisely, it follows from Theorem \ref{T61}, Theorem \ref{T63}, and
Corollary \ref{C62} that we first derive the existence theorem of the gradient
conic Sasaki-Ricci soliton orbifold metric:

\begin{theorem}
\label{T66} Let $(M,\xi,\eta_{0},g_{0},(1-\beta)D^{T})$ be a compact
transverse log Fano regular Sasakian manifold of dimension five and
$(Z_{0}=M/\mathcal{F}_{\xi},h_{0},\omega_{h_{0}},(1-\beta)D)$ be the space of
leaves of the characteristic foliation which is a log del Pezzo surface with
the klt singularities. Then the solution $(M(t),\xi,\eta(t),g(t),(1-\beta
)D^{T})$ of the conic Sasaki-Ricci flow converges smoothly to a compact
transverse log Fano quasi-regular Sasakian orbifold manifold $(M_{\infty}%
,\xi,\eta_{\infty},g_{\infty},(1-\beta)D^{T})$ with the leave space of log del
Pezzo orbifold surface $(Z_{\infty}=M_{\infty}/\mathcal{F}_{\xi},h_{\infty
},(1-\beta)D)$ outside $\Sigma_{\infty}\cup(1-\beta)D$, where the
singularities $\Sigma_{\infty}$ is at worst of the codimension two orbifold
singularities $\Sigma_{\infty}$ in $Z_{\infty}.$ Furthermore, $g^{T}(t_{i})$
converges smoothly to a gradient conic Sasaki-Ricci soliton orbifold metric
$g_{\infty}^{T}$ on $M_{\infty}$ with $g_{\infty}^{T}=\pi^{\ast}(h_{\infty})$
such that $h_{\infty}$ is the smooth conic K\"{a}hler-Ricci soliton metric in
the Cheeger-Gromov topology on $Z_{\infty}\backslash(\Sigma_{\infty}%
\cup(1-\beta)D).$
\end{theorem}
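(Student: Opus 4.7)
The plan is to mimic the scheme used in the regular case (Proposition \ref{P11}) and in Liu--Zhang for the conic K\"{a}hler-Ricci flow: descend (\ref{2023}) to a conic K\"{a}hler-Ricci flow on the leaf space $(Z_0,(1-\beta)D)$, extract a Cheeger-Gromov limit there, and lift the limit back to a Sasakian orbibundle through the $\mathbb{S}^1$-action given by the first structure theorem. Since the initial structure is regular, the projection $\pi:(M,\omega)\to (Z_0,\omega_{h_0})$ intertwines (\ref{2023}) with a conic K\"{a}hler-Ricci flow on $Z_0$, so the theorem will be reduced to studying the long-time behavior of a transverse conic K\"{a}hler-Ricci flow together with the compatible $\mathbb{S}^1$-geometry.

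\textbf{A priori estimates along the flow.} Next I would combine the uniform $L^4$-bound on the transverse conic Ricci curvature announced in the abstract with Perelman-type arguments in the conic transverse K\"{a}hler setting: uniform non-local-collapsing, the uniform bound on the transverse Ricci potential and its gradient, and the uniform diameter bound. Theorem \ref{T61} and Corollary \ref{C62} (invoked in the statement) supply long-time existence and the transverse $C^0$- and scalar-curvature bounds in the conic class, and together with the $L^4$-bound they control the transverse Sobolev constant uniformly in $t$.

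\textbf{Extracting and identifying the limit.} With these bounds in place, an Anderson/Cheeger-Colding type $\varepsilon$-regularity argument applied to the transverse K\"{a}hler metrics $h(t_i)$ produces a subsequential Gromov-Hausdorff limit $(Z_\infty,h_\infty)$, smooth outside a singular set $\Sigma_\infty\cup(1-\beta)D$ whose first component is of real codimension at least four, hence supports at worst codimension-two orbifold singularities. Equicontinuity of the Reeb $\mathbb{S}^1$-action allows it to pass to the limit, yielding the Sasakian orbibundle structure on $M_\infty$ with $g_\infty^T=\pi^*(h_\infty)$. Finally, Theorem \ref{T63}, which provides the monotonicity and lower semicontinuity of the transverse conic $W$-functional, forces $h_\infty$ to solve the gradient shrinking conic K\"{a}hler-Ricci soliton equation on $Z_\infty\setminus(\Sigma_\infty\cup(1-\beta)D)$, and lifting by $\pi$ gives the conic Sasaki-Ricci soliton orbifold metric claimed.

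\textbf{Main obstacle.} The hard part is that along $D^T$ the transverse Ricci tensor carries the distributional piece $(1-\beta)[D^T]$, so the standard Cheeger-Colding compactness, which requires a two-sided pointwise Ricci bound, is not directly applicable. This is precisely why the uniform $L^4$-estimate on the smooth part of $\mathrm{Ric}^T$ is indispensable: it allows an $L^p$-curvature / harmonic-radius argument away from the divisor, while transverse to $D^T$ the local model $\mathbb{C}\times\mathbb{C}_\beta$ has to be matched in order to rule out hidden collapse along the conic locus. Making this divisor-compatible regularity uniform in $t$, and verifying that the resulting transverse singular set is a genuine finite union of codimension-two orbifold points rather than something wilder, is the principal technical difficulty, and the one where the $L^4$-bound of the abstract does all the real work.
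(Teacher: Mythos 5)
Your overall skeleton --- the $L^{4}$-bound on the transverse conic Ricci curvature (Theorem \ref{T31}), Perelman-type estimates (Proposition \ref{UP1}), an $L^{p}$ Cheeger--Colding--Tian compactness, identification of the limit as a gradient soliton through $\mathcal{W}$-functional considerations, and lifting through the $\mathbb{S}^{1}$-structure --- is indeed how the paper deduces Theorem \ref{T66} from Theorem \ref{T61}, Theorem \ref{T63} and Corollary \ref{C62}. But two essential steps are missing and several attributions are wrong. The most serious gap: the conclusion that $Z_{\infty}$ is a normal projective (log del Pezzo) variety whose singular set is at worst codimension-two orbifold klt singularities cannot be extracted from an Anderson/Cheeger--Colding $\varepsilon$-regularity argument, as you claim; metric-measure arguments only bound the Hausdorff codimension of the metric singular set and give neither an orbifold structure, nor projectivity, nor the klt condition. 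In the paper this is precisely the role of the partial $C^{0}$-estimate, Theorem \ref{T65} (Donaldson--Sun/Tian--Wang type), which embeds the limit into $\mathbb{CP}^{N}$ by orthonormal basic sections of $(L^{T})^{m}$ and identifies the singular set $\overline{\mathcal{S}}$ as a subvariety; Corollary \ref{C62} is the \emph{output} of that step, not an a priori input as you use it. Your proposal never mentions the partial $C^{0}$-estimate, so the orbifold/klt part of the statement is unsubstantiated.

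Second, the smooth convergence $g^{T}(t_{i})\rightarrow g_{\infty}^{T}$ away from $\Sigma_{\infty}\cup(1-\beta)D$ does not follow from $C_{B}^{\alpha}\cap L_{B}^{2,p}$ convergence plus elliptic regularity of the limiting equation; in the paper it is Theorem \ref{T64}, obtained from Perelman's pseudolocality theorem together with Shi's derivative estimates, and this is also what shows $d_{\infty}^{T}$ is induced by a smooth soliton metric on the regular set. Your attributions are also off: Theorem \ref{T63} is the regularity/bootstrap statement for the elliptic system (\ref{c3})--(\ref{c4}), not the monotonicity of the $\mathcal{W}$-functional; the soliton equation enters through (\ref{a2}) and (\ref{b2}), which come from the twisted $\mathcal{W}_{\theta_{\epsilon}}^{T}$-monotonicity (Proposition \ref{P2024}). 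Likewise Theorem \ref{T61} is the compactness theorem, not a source of long-time existence or scalar-curvature bounds; those come from approximating the conic flow by the smooth twisted flows (\ref{2023-1}) and from Proposition \ref{UP1}, uniformly in $\epsilon$. This twisted-flow approximation is bypassed entirely in your proposal, yet it is what makes the $L^{4}$-bound and the Perelman estimates legitimate across the conic locus, i.e.\ it is the mechanism by which "the $L^{4}$-bound does the real work" that you correctly identify as the main obstacle.
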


All these results confirm the Hamilton-Tian conjecture that $(M,g(t))$
converges (along a subsequence) to a shrinking Sasaki-Ricci soliton with mild
singularities on a compact transverse Fano Sasakian manifold of dimension five.

Second, we show that it is indeed a conic Sasaki-Einstein metric if
$(M,(1-\beta)D^{T})$ is transverse log $K$-polystable (Definition \ref{d61}).
This is an old dimensional counterpart of Yau-Tian-Donaldson conjecture on a
compact log $K$-polystable K\"{a}hler manifold (\cite{cds1}, \cite{cds2},
\cite{cds3}, \cite{t5}). It can be viewed as a parabolic Sasaki analogue of
Tian-Wang \cite{tw1} and a generalized theorem of Chen-Sun-Wang \cite{csw},
Liu-Zhang \cite{lz} and Collins-Jacob \cite{cj}.

\begin{theorem}
\label{T12}Let $(M,\xi,\eta_{0},g_{0},(1-\beta)D^{T})$ be a compact transverse
log Fano regular Sasakian manifold of dimension five and $(Z_{0}%
=M/\mathcal{F}_{\xi},h_{0},\omega_{h_{0}},(1-\beta)D)$ be the space of leaves
of the characteristic foliation which is a log del Pezzo surface. If
$(M,(1-\beta)D^{T})$ is transverse log $K$-polystable, then there exists a
conic Sasaki-Einstein metric of $(M,(1-\beta)D^{T})$.
\end{theorem}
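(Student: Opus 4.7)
The plan is to combine the convergence result of Theorem \ref{T66} with the algebro-geometric obstruction given by transverse log $K$-polystability, following the parabolic approach of Tian-Wang \cite{tw1} and its Sasaki/conic adaptations in \cite{csw}, \cite{lz}, \cite{cj}. By Theorem \ref{T66} the conic Sasaki-Ricci flow on $(M,\xi,\eta_{0},g_{0},(1-\beta)D^{T})$ converges in the Cheeger-Gromov sense to a gradient conic Sasaki-Ricci soliton $(M_{\infty},\xi,\eta_{\infty},g_{\infty},(1-\beta)D^{T})$ whose leaf space is a log del Pezzo orbifold $(Z_{\infty},(1-\beta)D)$ with at worst codimension-two klt singularities. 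The remaining content of the theorem is to upgrade ``soliton'' to ``Einstein'' and to identify $M_{\infty}$ with $M$ under the $K$-polystability hypothesis.

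First I would set up the soliton data. The limit carries a transverse holomorphic Reeb-commuting vector field $X_{\infty}$ on $M_{\infty}$ (the gradient of the soliton potential), which descends to a holomorphic vector field on $(Z_{\infty},(1-\beta)D)$. Using $X_{\infty}$ I would construct a $\mathbb{C}^{\ast}$-equivariant special test configuration $\mathcal{M}$ whose generic fiber is $(M,(1-\beta)D^{T})$ and whose central fiber is $(M_{\infty},(1-\beta)D^{T})$. The existence of such a degeneration is by now standard in the Kähler setting after Chen-Sun-Wang \cite{csw}; the Sasaki version is obtained by working $\mathbb{S}^{1}$-equivariantly on the affine Kähler cone, or equivalently on the leaf-space orbifold, and then pulling back via the Boothby-Wang type submersion $\pi:(M,\Delta^{T})\to(Z,\Delta)$ described in the introduction, so that the transverse divisor $D^{T}=\pi^{\ast}(D)$ deforms along the family.

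Next I would compute the log Sasaki-Donaldson-Futaki invariant $\mathrm{Fut}(\mathcal{M},(1-\beta)\mathcal{D}^{T};X_{\infty})$ of this test configuration. Because $(M_{\infty},(1-\beta)D^{T})$ is a gradient conic Sasaki-Ricci soliton with soliton vector field $X_{\infty}$, a direct computation (transverse analogue of Tian-Zhu's formula, adapted to the conic setting as in \cite{lz}, \cite{cj}) gives
\[
\mathrm{Fut}(\mathcal{M},(1-\beta)\mathcal{D}^{T};X_{\infty})\leq 0,
\]
with equality if and only if $X_{\infty}\equiv 0$ and the central fibre is isomorphic to the generic one. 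Equivalently, the derivative of the log Sasaki-Mabuchi $K$-energy along the soliton direction vanishes, which combined with its convexity along geodesics in the space of transverse Kähler potentials forces the above inequality. The hypothesis that $(M,(1-\beta)D^{T})$ is transverse log $K$-polystable (Definition \ref{d61}) then says exactly $\mathrm{Fut}\geq 0$, with vanishing only for product configurations.

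Combining these two inequalities forces $\mathrm{Fut}=0$ and $\mathcal{M}$ to be a product test configuration, hence $(M_{\infty},(1-\beta)D^{T})\cong(M,(1-\beta)D^{T})$ and $X_{\infty}\equiv 0$. A soliton with vanishing soliton vector field is a conic Sasaki-Einstein metric, so the limit $g_{\infty}$ yields the desired conic Sasaki-Einstein metric on $(M,(1-\beta)D^{T})$. The main obstacle I anticipate is the rigorous construction and analysis of the equivariant test configuration in the conic transverse Sasakian category, in particular making sense of the special degeneration of the branch divisor $D^{T}$ and matching the orbifold/klt behaviour of $(Z_{\infty},(1-\beta)D)$ with the algebro-geometric test-configuration framework; modulo this, the sign computation for $\mathrm{Fut}$ and the polystability argument follow the now-standard template.
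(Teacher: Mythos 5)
Your route is genuinely different from the paper's, and as written it has a gap at its central step. The paper never constructs a test configuration out of the soliton vector field. Instead it uses the polystability hypothesis analytically: via the embeddings by basic sections of $(L^{T})^{m}$, the partial $C^{0}$-estimate (Theorem \ref{T65}) and a theorem of S. Paul \cite{paul1}, transverse log $K$-polystability gives a uniform lower bound for the log Sasaki-Mabuchi $K$-energy along the conic flow (Theorem \ref{t2022}); this lower bound forces the twisted Ricci potential and its derivatives to decay (the Lemma from \cite{l} and Corollary \ref{c2022}, combined with (\ref{53}) and Proposition \ref{P42}), so the limit is already conic Sasaki--Einstein, with no soliton-to-Einstein upgrade needed; polystability is then used a second time, in a short contradiction argument with Definition \ref{d61}, to conclude $M_{\infty}\cong M$ because the log Sasaki-Futaki invariant of an Einstein limit vanishes.

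The gap in your proposal is precisely the step you flag as "standard after Chen-Sun-Wang": the existence of a $\mathbb{C}^{\ast}$-equivariant special degeneration of $(M,(1-\beta)D^{T})$ with central fibre $(M_{\infty},(1-\beta)D^{T})$, together with the asserted identity $\mathrm{Fut}(\mathcal{M};X_{\infty})\leq 0$ with equality iff $X_{\infty}\equiv 0$. Neither is a direct computation. In \cite{csw} the degeneration produced by the flow is a delicate two-step construction (via the $H$-invariant and partial $C^{0}$-estimates), the first step has \emph{vanishing} Donaldson-Futaki invariant, and the failure to be Einstein is detected by a modified (soliton) invariant relative to the torus generated by $X_{\infty}$, not by the ordinary invariant evaluated "at $X_{\infty}$". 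Your inequality conflates the product configuration of the \emph{central fibre} generated by $X_{\infty}$ (which destabilizes $M_{\infty}$, a manifold to which the polystability hypothesis on $M$ does not directly apply) with the Donaldson-Futaki invariant of the degeneration of $M$ itself, whose generator need not be $X_{\infty}$. Moreover, Definition \ref{d61} formulates transverse log $K$-polystability in Tian's style, through one-parameter subgroups of $SL^{T}(N+1,\mathbb{C})$ acting on a fixed CR Kodaira embedding, so even granting your configuration you would still need the partial $C^{0}$-estimate machinery to realize it in that form before the hypothesis can be invoked. To close the argument along the paper's lines, replace the test-configuration step by the $K$-energy lower bound of Theorem \ref{t2022} and the decay estimates of Corollary \ref{c2022}.
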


It follows from Remark \ref{R21} and Theorem \ref{T12} that we have the
existence theorem of Sasaki-Ricci soliton and Sasaki-Einstein metric on a
compact transverse log Fano quasi-regular Sasakian manifold $(M,\Delta^{T})$
of dimension five.

\begin{corollary}
\label{C12} Let $(M,\Delta^{T})$ be a compact quasi-regular transverse log
Fano Sasakian manifold of dimension five and $(Z,\Delta)$ be the space of
leaves of the characteristic foliation which is not well-formed and the
orbifold structure has the codimension one fixed point set of some non-trivial
isotropy subgroup. Then

\begin{enumerate}
\item There always exists a Sasaki-Ricci soliton orbifold metric on
$(M,\Delta^{T})$.

\item In additional, suppose that $(\widetilde{M},(1-\beta)\widetilde{D}^{T})$
is transverse log $K$-polystable for a log resolution $\widetilde{\phi
}:(\widetilde{M},D^{T})\rightarrow(M,\Delta^{T})$. Then there exists a
Sasaki-Einstein metric on $(M,\Delta^{T}).$
\end{enumerate}
\end{corollary}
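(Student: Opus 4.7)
The plan is to lift the problem to a log resolution $\widetilde{\phi}:(\widetilde{M},D^{T})\rightarrow(M,\Delta^{T})$, apply Theorems \ref{T66} and \ref{T12} on $\widetilde{M}$, and then push the resulting metric back down to $(M,\Delta^{T})$ using the correspondence recorded in Remark \ref{R21}. That remark identifies Sasaki-Einstein orbifold metrics on the log Fano quasi-regular Sasakian manifold $(M,\Delta^{T})$ with conic Sasaki-Einstein metrics on the log Fano regular Sasakian manifold $(\widetilde{M},(1-\beta)D^{T})$, where the transverse cone angle $2\pi(1-\beta)$ along $D^{T}$ is determined by the ramification indices of the branch divisors in $\Delta$; the analogous correspondence is available for gradient Sasaki-Ricci solitons.

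For part (1), I would first verify that $(\widetilde{M},\xi,\eta_{0},g_{0},(1-\beta)\widetilde{D}^{T})$ meets the hypotheses of Theorem \ref{T66}; this is built into the very construction of a log resolution of a log Fano quasi-regular Sasakian manifold. Theorem \ref{T66} then yields a compact transverse log Fano quasi-regular Sasakian orbifold $(\widetilde{M}_{\infty},\xi,\eta_{\infty},g_{\infty},(1-\beta)\widetilde{D}^{T})$ carrying a gradient conic Sasaki-Ricci soliton orbifold metric $\widetilde{g}_{\infty}^{T}$. Applying Remark \ref{R21} in reverse, I would descend $\widetilde{g}_{\infty}^{T}$ through $\widetilde{\phi}$: because the transverse cone angle along $\widetilde{D}^{T}$ is calibrated to the orbifold cone angle of $\Delta^{T}$, the pushforward produces a Sasaki-Ricci soliton orbifold metric on $(M,\Delta^{T})$, up to the natural identification in the Cheeger-Gromov limit.

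For part (2), I would repeat the same lift-and-descend argument with Theorem \ref{T12} in place of Theorem \ref{T66}. Under the transverse log $K$-polystability assumption on $(\widetilde{M},(1-\beta)\widetilde{D}^{T})$, Theorem \ref{T12} upgrades the conic Sasaki-Ricci soliton to a conic Sasaki-Einstein metric on $(\widetilde{M},(1-\beta)\widetilde{D}^{T})$, and the same descent through $\widetilde{\phi}$ delivers a Sasaki-Einstein orbifold metric on $(M,\Delta^{T})$.

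The main obstacle, and the reason the corollary is not purely formal, is justifying the descent step at the regularity level actually needed: one must show that a conic Sasaki-Einstein (respectively Sasaki-Ricci soliton) metric on $\widetilde{M}$ with precisely the prescribed transverse cone angle along $\widetilde{D}^{T}$ pulls back, via the uniformizing charts near $\Delta^{T}$, to a genuine orbifold metric rather than a metric with persistent conic singularities. This requires an explicit local model in a tubular neighborhood of the codimension-one foliation singular set, using the first structure theorem (Proposition \ref{P21}) together with the $\mathbb{S}^{1}$-equivariance along the Reeb flow to match the transverse conic picture on $\widetilde{M}$ with the orbifold uniformization on $(M,\Delta^{T})$. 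Once that local matching is in place, the corollary follows from a direct application of the two theorems on the resolution.
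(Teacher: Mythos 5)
Your proposal matches the paper's argument: Corollary \ref{C12} is obtained precisely by running Theorem \ref{T66} (for the soliton) and Theorem \ref{T12} (under transverse log $K$-polystability) on the log resolution $(\widetilde{M},D^{T})$ and then descending through $\widetilde{\phi}$ via the correspondence of Remark \ref{R21}. The descent/regularity issue you flag is not carried out in detail in the paper either; Remark \ref{R21} delegates it to the cited works of Chang-Han-Tie \cite{cht} and Tian-Wang \cite{tw2} on the compactness and existence theory for conic (Sasaki-)Einstein metrics.
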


Finally, it is proved by Wang-Zhu \cite{wz} that all smooth del Pezzo surfaces
admit a K\"{a}hler-Ricci soliton metric and Shi-Zhu \cite{sz} extended to tori
Fano orbifolds.\ However, as a consequence of Proposition \ref{P11}, Corollary
\ref{C12} and Guo-Phong-Song-Sturm \cite{gpss}, we have

\begin{corollary}
\label{C13}Let $(M,\Delta^{T})$ be a compact quasi-regular transverse log Fano
Sasakian manifold of dimension five and $(Z,\Delta)$ be the space of leaves of
the characteristic foliation which is a log del Pezzo orbifold surface. Then
there exists a K\"{a}hler-Ricci soliton orbifold metric on $(Z,\Delta)$.
\end{corollary}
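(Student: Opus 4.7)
The plan is to split into two cases according to whether the orbifold structure of $(Z,\Delta)$ is well-formed. In the well-formed case (equivalently $\Delta=0$, so that every non-trivial isotropy subgroup has codimension-two fixed-point set), the pair $(M,\xi,\eta_{0},g_{0})$ satisfies the hypotheses of Proposition~\ref{P11}. Invoking that proposition directly yields a Sasaki-Ricci soliton orbifold limit $(M_{\infty},\xi,\eta_{\infty},g_{\infty})$ whose transverse metric descends, through the Riemannian submersion $\pi$ with $\tfrac{1}{2}d\eta=\pi^{\ast}(\omega_{h})$, to a smooth gradient K\"ahler-Ricci soliton orbifold metric $h_{\infty}$ on $(Z_{\infty},\Sigma_{\infty})$ with at worst codimension-two klt orbifold singularities.

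When $(Z,\Delta)$ is not well-formed, so that $\Delta$ contains a non-trivial codimension-one branch divisor, I would instead appeal to Corollary~\ref{C12}(1). This yields a Sasaki-Ricci soliton orbifold metric on $(M,\Delta^{T})$. Because $\pi:(M,\Delta^{T},g,\omega)\to(Z,\Delta,h,\omega_{h})$ is a Riemannian submersion with $\tfrac{1}{2}d\eta=\pi^{\ast}(\omega_{h})$ and the characteristic foliation is transverse K\"ahler, the basic Sasaki-Ricci soliton equation on $M$ passes fiberwise to the orbifold K\"ahler-Ricci soliton equation on $(Z,\Delta)$, producing the desired K\"ahler-Ricci soliton orbifold metric on the leave space.

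The role of Guo-Phong-Song-Sturm~\cite{gpss} is to supply the analytic input, namely uniform diameter and $C^{0}$-type estimates for complex Monge-Amp\`ere equations with klt-type singularities, required to upgrade the Cheeger-Gromov limit provided by the (conic) Sasaki-Ricci flow to a genuine orbifold K\"ahler-Ricci soliton metric on $(Z,\Delta)$, smooth off the codimension-two orbifold stratum. The main obstacle I anticipate is verifying that the transverse conic structure $(1-\beta)D^{T}$ along a log resolution $\widetilde{\phi}:(\widetilde{M},D^{T})\to(M,\Delta^{T})$ actually descends to orbifold cone angles of the form $2\pi/m_{\alpha}$ along each $\Delta_{\alpha}$, so that the cone data is absorbed into the orbifold charts of $(Z,\Delta)$ rather than producing a genuine conic metric outside the orbifold framework; confirming this requires combining the interpretation in Remark~\ref{R21} with the estimates of \cite{gpss}.
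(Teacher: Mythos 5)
Your proposal follows essentially the same route as the paper, which states this corollary as a direct consequence of Proposition~\ref{P11} (the well-formed case), Corollary~\ref{C12} (the non-well-formed case with branch divisor $\Delta$), and Guo-Phong-Song-Sturm \cite{gpss}; your case split and the descent of the transverse soliton through $\pi$ with $g^{T}=\pi^{\ast}(h)$ merely make explicit what the paper leaves implicit in the statements of those results. The only divergence is your gloss on the precise role of \cite{gpss}, but since the paper itself gives no further detail there, this does not constitute a departure from its argument.
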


It follow from Proposition \ref{P11} and Corollary \ref{C13} that

\begin{corollary}
There exists a Sasaki-Ricci soliton orbifold metric on any compact
quasi-regular transverse Fano or log Fano Sasakian manifold of dimension five.
\end{corollary}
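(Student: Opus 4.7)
The plan is to split the proof into two cases according to the dichotomy furnished by the first structure theorem for quasi-regular Sasakian manifolds recalled earlier in the paper. Given a compact quasi-regular Sasakian $5$-manifold $(M,\xi,\eta,g)$, the leaf space of the characteristic foliation inherits a normal projective orbifold structure with klt singularities, and exactly one of the following holds: either (i) every non-trivial isotropy subgroup fixes a subset of codimension at least two, so that the orbifold $Z$ is well-formed and the branch divisor $\Delta$ is empty, or (ii) some non-trivial isotropy subgroup fixes a codimension one subset, giving a non-trivial branch divisor $\Delta\neq 0$ and a transverse divisor $\Delta^{T}=\pi_{\ast}^{-1}(\Delta)$ on $M$.

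In case (i), the hypothesis that $M$ is transverse Fano means $(Z,h_{0},\omega_{h_{0}})$ is a del Pezzo orbifold surface of cyclic quotient klt singularities with only codimension two orbifold singularities $\Sigma_{0}$. I would then invoke Proposition \ref{P11} directly: the Sasaki-Ricci flow starting from $(M,\xi,\eta_{0},g_{0})$ converges in the Cheeger-Gromov sense to a compact quasi-regular transverse Fano Sasaki-Ricci soliton orbifold $(M_{\infty},\xi,\eta_{\infty},g_{\infty})$, whose transverse metric $g_{\infty}^{T}=\pi^{\ast}(h_{\infty})$ is the sought Sasaki-Ricci soliton orbifold metric.

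In case (ii), the hypothesis that $M$ is transverse log Fano means $(Z,\Delta)$ is a log del Pezzo orbifold surface, and $(M,\Delta^{T})$ is a compact quasi-regular transverse log Fano Sasakian manifold with codimension one foliation singularities of the Type II form described in Section 2. Here I would apply part (1) of Corollary \ref{C12}, which asserts precisely the existence of a Sasaki-Ricci soliton orbifold metric on $(M,\Delta^{T})$; this in turn rests on Theorem \ref{T66} obtained via the conic Sasaki-Ricci flow and on Remark \ref{R21} which allows the passage between the log resolution $\widetilde{M}$ and the original quasi-regular orbifold $(M,\Delta^{T})$.

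Since cases (i) and (ii) together exhaust the output of the first structure theorem, the corollary follows. The substantive analytic work — long-time existence, uniform $L^{4}$-bound on the transverse conic Ricci curvature, Cheeger-Gromov compactness, and the identification of the limit as a singular orbifold conic Sasaki-Ricci soliton — has already been carried out in Proposition \ref{P11} and in the proof of Theorem \ref{T66}, so no obstacle remains beyond correctly matching the quasi-regular structure of $M$ with the appropriate existence theorem for its leaf space. The main conceptual point to verify is that the transverse log Fano condition on $(M,\Delta^{T})$ is automatic from the assumption that $M$ is quasi-regular and transverse Fano or log Fano with leaf space a (log) del Pezzo orbifold, so that Corollary \ref{C12}(1) genuinely applies in case (ii) without any further hypothesis.
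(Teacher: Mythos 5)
Your proposal is correct and matches the paper's own (one-line) deduction: the corollary is just the combination of Proposition \ref{P11} for the transverse Fano, well-formed (Type I) case with the log Fano results of this paper for the non-well-formed (Type II) case. The only cosmetic difference is that the paper cites Corollary \ref{C13} (the K\"ahler-Ricci soliton orbifold metric on the leaf space, to be pulled back via $g^{T}=\pi^{\ast}(h)$), whereas you invoke Corollary \ref{C12}(1), which gives the Sasaki-Ricci soliton orbifold metric on $(M,\Delta^{T})$ directly; both rest on Theorem \ref{T66} and Remark \ref{R21}, so your route is, if anything, slightly more direct.
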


In our upcoming project (\cite{cht}) for any compact irregular Fano Sasakian
manifold of dimension five, by the second structure theorem (\cite{ru},
\cite{bg}), we will focus on the compactness properties of a sequence of
orbifold Sasaki-Ricci solitons on a compact quasi-regular transverse Fano
Sasakian $5$-manifold and its leave space is normal projective Fano variety
with Kawamata log terminal (klt) singularities. More precisely, we are
expected to derive the Sasaki analogue of compactness theorems of compact
K\"{a}hler-Ricci solitons (\cite{gpss}, \cite{tz2}) by applying algebraic and
analytic invariants such as the $\alpha$-invariant due to Tian \cite{t3}, the
log canonical threshold due to Cheltsov-Shramov and Demailly \cite{chesh} and
a uniform positive lower bound of the log canonical threshold due to Birkar
\cite{bir}.

\begin{remark}
\label{R11}

\begin{enumerate}
\item It follows from Proposition \ref{P11} and Corollary \ref{C12} that we
have the existence theorem of Sasaki-Einstein metrics on a compact transverse
Fano quasi-regular Sasakian manifold of dimension five. Moreover, there are
non-regular Sasaki-Einstein metrics on connected sums of$\ \mathbb{S}%
^{2}\times\mathbb{S}^{3},$ rational homology $5$-spheres and connected sums of
these. We refer to \cite{bg}, \cite{k2}, \cite{k3}, \cite{sp}, \cite{cfo},
\cite{fow}, \cite{msy} and references therein.

\item In fact, most $5$-manifolds do not admit a positive Sasakian structure
and there is a classification of (quasi-regular) Sasaki-Einstein structures on
a rational homology $5$-sphere which are quasi-regular only. We refer to
\cite{bg}, \cite{k3}, \cite{su}, \cite{pw} and references therein.

\item The key ingredients for our proofs in this article remain hold for a
quasi-regular Sasakian manifold of dimension up to seven with its orbifold
leave space due to the $L^{4}$-Bound of the transverse conic Ricci curvature.

\item The same results hold for the conic K\"{a}hler-Ricci flow on normal log
pair of K\"{a}hler orbifolds of dimension up to three.

\item In general, there is a orbifold stability in a compact quasi-regular
Sasakian manifold in the sense of Ross-Thomas \cite{rt}. Instead, Collins and
Sz\'{e}kelyhidi \cite{csz2} showed that the Sasakian manifold admits a
Sasaki-Einstein metric if and only if its K\"{a}hler cone is $K$-stable by the
continuity method which corresponds to the Type I foliation singularities
(\cite{cchlw}). Along this direction, Boyer and van Coevering \cite{bvc} deal
with the Sasakian extremal metrics.
\end{enumerate}
\end{remark}

The crucial steps are followings. First, we are able to refine the Sasaki
analogue of a log pair quasi-regular Sasakian manifold $(M,\Delta^{T})$ with
klt foliation singularities. Then, for a Sasaki resolution $\widetilde{\phi
}:(\widetilde{M},D^{T})\rightarrow(M,\Delta^{T}),$ we can construct a
Sasaki-Einstein metric on such a log Fano quasi-regular Sasakian manifold $M$
with the foliation singularities $\Delta^{T}$ by finding the conic
Sasaki-Einstein metric on a log Fano regular Sasakian manifold $\widetilde{M}$
with the transverse cone angle $2\pi(1-\beta)$ along $D^{T}$. We refer to
Remark \ref{R21} in some detail.

Second, we study the structure and regularity theory of desired limit spaces
by applying $L^{p}$ version of the Cheeger-Colding-Tian structure theory
(\cite{cct}, \cite{ds}, \cite{tw1}, \cite{cht}), Then, based on Perelman's
uniform noncollapsing condition and pseudolocality theorem of the Ricci flow,
the limit solution must be a gradient conic Sasaki-Ricci soliton orbifold
metric $g_{\infty}^{T}$ on $M_{\infty}$. This is a Sasaki analogue of the
partial $C^{0}$-estimate (\cite{ds}, \cite[Theorem 5.1]{tz}) due to the first
structure theorem on quasi-regular Sasakian manifolds.

Finally, we are able to adapt the notions as in \cite{d2} and \cite{li} in our
Sasakian setting for the log Sasaki-Donaldson-Futaki invariant and log
Sasaki-Mabuchi $K$-energy. Then we can have the Sasaki analogue of log
$K$-polystable on Sasakian manifolds which is served as the different view
point from Collins-Sz\'{e}kelyhidi \cite{csz2} and Ross-Thomas \cite{rt}.

In section $2,$ we recall some preliminaries for Sasakian manifolds and
Sasakian structures. Then define a log pair $(M,\Delta^{T})$ with klt
foliation singularities. Based on this, we introduce the twisted and conic
Sasaki-Ricci flow. In section $3,$ we first prove the $L^{4}$-boundedness of
the transverse Ricci curvature along the twisted Sasaki-Ricci flows. Then, by
$L^{p}$ version of the Cheeger-Colding-Tian structure theory, to study the
structure of desired limit spaces. Finally, by a partial $C^{0}$-estimate, one
can refine the regularity of the desired limit space due to Donaldson
\cite{ds} and Tian-Wang \cite{tw1}. In the final section, we prove the main
theorems. More precisely, we derive all Sasaki analogues of log
Donaldson-Futaki invariant, log Mabuchi $K$-energy and Perelman's
$\mathcal{W}$-functional. We are able to obtain a shrinking Sasaki-Ricci
soliton with mild singularities and show that it is indeed a conic
Sasaki-Einstein metric if it is transverse log $K$-polystable.

\textbf{Acknowledgements.} Part of the project was done during the first named
author visiting to Department of Mathematics under the supported by iCAG
program of Higher Education Sprout Project of National Taiwan Normal
University and the Ministry of Education (MOE) in Taiwan. He would like to
express his gratitude for the warm hospitality there.

\section{Preliminaries}

In this section, we will recall some preliminaries for Sasakian manifolds and
Sasakian structures. Then define a log pair $(M,\Delta^{T})$ with klt
foliation singularities and introduce the twisted and conic Sasaki-Ricci flow.
We refer to \cite{bg}, \cite{fow}, \cite{sp}, \cite{clw}, and references
therein in some detail.

\subsection{Sasakian Geometry}

We review some basic facts from Sasakian geometry (\cite{bg}, \cite{fow},
\cite{chlw}). Let $(M,g)$ be a Riemannian $(2n+1)$-manifold. $(M,g)$ is called
Sasaki if the cone%
\[
(C(M),\overline{g},J):=(\mathbb{R}^{+}\times M\mathbf{,}dr^{2}+r^{2}g)
\]
is K\"{a}hler. In that case $\{r=1\}=\{1\}\times M\subset C(M)$. Define the
Reeb vector field
\[%
\begin{array}
[c]{c}%
\xi=J(\frac{\partial}{\partial r})
\end{array}
\]
and the contact $1$-form
\[
\eta(Y)=g(\xi,Y).
\]
Then $\eta(\xi)=1$ and $d\eta(\xi,X)=0.$ $\xi$ is a killing vector field with
unit length.

There are equivalent statements that there exists a Killing vector field $\xi$
of unit length on $M$ so that the tensor field of type $(1,1)$, defined by
\[
\Phi(Y)=\nabla_{Y}\xi,
\]
satisfies the condition%
\[
(\nabla_{X}\Phi)(Y)=g(\xi,Y)X-g(X,Y)\xi
\]
for any pair of vector fields $X$ and $Y$ on $M$. Furthermore, the Riemann
curvature of $(M,g)$ satisfies the condition%
\begin{equation}
R(X,\xi)Y=g(\xi,Y)X-g(X,Y)\xi. \label{a}%
\end{equation}
Now there is a natural splitting
\[%
\begin{array}
[c]{c}%
TC(M)=L_{r\frac{\partial}{\partial r}}\oplus L_{\xi}\oplus H
\end{array}
\]
and the relation%
\[%
\begin{array}
[c]{c}%
JY=\Phi(Y)-\eta(Y)r\frac{\partial}{\partial r}.
\end{array}
\]
Then%
\[
\Phi^{2}=-I+\eta\otimes\xi
\]
and%
\[
g(\Phi X,\Phi Y)=g(X,Y)-\eta(X)\eta(Y)\text{ \textrm{which is} }g=g^{T}%
+\eta\otimes\eta.
\]
Then $(M,\xi,\eta,g,\Phi)$ is said to be a Sasakian manifold $(M,g)$ with the
Sasakian structure $(\xi,\eta,g,\Phi)$. Let $\{U_{\alpha}\}_{\alpha\in\Lambda
}$ be an open covering of the Sasakian manifold $(M,\xi,\eta,g,\Phi)$ and
\[
\pi_{\alpha}:U_{\alpha}\rightarrow V_{\alpha}\subset%
\mathbb{C}
^{n}%
\]
submersion such that $\pi_{\alpha}\circ\pi_{\beta}^{-1}:\pi_{\beta}(U_{\alpha
}\cap U_{\beta})\rightarrow\pi_{\alpha}(U_{\alpha}\cap U_{\beta})$ is
biholomorphic. On each $V_{\alpha},$ there is a canonical isomorphism
\[
d\pi_{\alpha}:D_{p}\rightarrow T_{\pi_{\alpha}(p)}V_{\alpha}%
\]
for any $p\in U_{\alpha},$ where the contact subbundle $D=\ker\eta\subset TM.$
$d\eta$ is the Levi form on $D.$ It is a global two form on $M$.

Since $\xi$ generates isometries, the restriction of the Sasakian metric $g$
to $D$ gives a well-defined Hermitian metric $g_{\alpha}^{T}$ on $V_{\alpha}$.
In fact it is K\"{a}hler. The K\"{a}hler $2$-form $\omega_{\alpha}^{T}$ of the
Hermitian metric $g_{\alpha}^{T}$ on $V_{\alpha}$ is the same as the
restriction of the Levi form $\frac{1}{2}d\eta$ to $\widetilde{D_{\alpha}^{n}%
}$, the slice $\{x=$ \textrm{constant}$\}$ in $U_{\alpha}.$ The collection of
K\"{a}hler metrics $\{g_{\alpha}^{T}\}$ on $\{V_{\alpha}\}$ is so-called a
transverse K\"{a}hler metric. We often refer to $\frac{1}{2}d\eta$ as the
K\"{a}hler form of the transverse K\"{a}hler metric $g^{T} $ in the leaf space
$\widetilde{D^{n}}.$

We denote $\nabla^{T},$ $\mathrm{Rm}^{\mathrm{T}},$ $\mathrm{Ric}^{\mathrm{T}%
}$ and $\mathrm{R}^{\mathrm{T}}$ for its Levi-Civita connection, the
curvature, the Ricci tensor and the scalar curvature with respect to $g^{T}$.
For $\widetilde{X},$ $\widetilde{Y},$ $\widetilde{W},$ $\widetilde{Z}\in
\Gamma(TD)$ and the $d\pi_{\alpha}$-corresponding $X,$ $Y,$ $W,$ $Z\in
\Gamma(TV_{\alpha}),$ we have the following:%
\[%
\begin{array}
[c]{c}%
\nabla_{X}^{T}Y=d\pi_{\alpha}(\nabla_{\widetilde{X}}\widetilde{Y}),\text{
}\widetilde{\nabla_{X}^{T}Y}=\nabla_{\widetilde{X}}\widetilde{Y}+g(JX,Y)\xi,
\end{array}
\]%
\begin{equation}%
\begin{array}
[c]{ccl}%
\mathrm{Rm}^{\mathrm{T}}(X,Y,Z,W) & = & \mathrm{Rm}(\widetilde{X}%
,\widetilde{Y},\widetilde{Z},\widetilde{W})+2g(J\widetilde{X},\widetilde
{Y})g(J\widetilde{Z},\widetilde{W})\\
&  & +g(J\widetilde{X},\widetilde{Z})g(J\widetilde{Y},\widetilde
{W})-g(J\widetilde{X},\widetilde{W})g(J\widetilde{Y},\widetilde{Z})
\end{array}
\label{b}%
\end{equation}
and
\begin{equation}
\mathrm{Ric}^{\mathrm{T}}(X,Z)=\mathrm{Ric}(\widetilde{X},\widetilde
{Z})+2g(\widetilde{X},\widetilde{Z}). \label{c}%
\end{equation}
Furthermore, the Sasaki metric can be written as
\[
g=g^{T}\oplus\eta\otimes\eta
\]
and transversal Ricci curvature $\mathrm{Ric}^{\mathrm{T}}$ is
\[
\mathrm{Ric}^{\mathrm{T}}=\mathrm{Ric}+2g^{T}.
\]

In terms of the foliation normal coordinate which are simultaneously foliated
and Riemann normal coordinates (\cite{gkn}, \cite{chlw}) along the leaves, we
have a coordinate $\{x,z^{1},z^{2},\cdots,z^{n}\}$ on a neighborhood $U$ of
$p$, such that $x$ is the coordinate along the leaves with $\xi=\frac
{\partial}{\partial x}$ on $U$. Moreover, $\{z^{1},z^{2},\cdots,z^{n}\}$ is
the local holomorphic coordinates on $V_{\alpha}$. We pull back these to
$U_{\alpha}$ and still write the same. Then we have the foliation local
coordinate $\{x,z^{1},z^{2},\cdots,z^{n}\}$ on $U_{\alpha}\ $and $(D\otimes%
\mathbb{C}
)$ is spanned by the fields $Z_{j}=\frac{\partial}{\partial z^{j}}+\sqrt
{-1}h_{j}\frac{\partial}{\partial x},\ j\in\left\{  1,2,\cdots,n\right\}  $
with
\[%
\begin{array}
[c]{c}%
\eta=dx-\sqrt{-1}h_{j}dz^{j}+\sqrt{-1}h_{\overline{j}}d\overline{z}^{j}%
\end{array}
\]
and its dual frame
\[
\{\eta,dz^{j},\ j=1,2,\cdots,n\}.
\]
Here $h$ is a basic function such that $\frac{\partial h}{\partial x}=0$ and
$h_{j}=\frac{\partial h}{\partial z^{j}},$ $h_{j\overline{l}}=\frac
{\partial^{2}h}{\partial z^{j}\partial\overline{z}^{l}}$ with the foliation
normal coordinate%
\begin{equation}
h_{j}(p)=0,\text{ }h_{j\overline{l}}(p)=\delta_{j}^{l},\text{ }dh_{j\overline
{l}}(p)=0. \label{AAA3}%
\end{equation}

Now in terms of the normal coordinate, we have%
\[%
\begin{array}
[c]{c}%
g^{T}=g_{i\overline{j}}^{T}dz^{i}d\overline{z}^{j}%
\end{array}
\]
and
\[%
\begin{array}
[c]{c}%
\omega=2\sqrt{-1}h_{i\overline{j}}dz^{i}\wedge d\overline{z}^{j}.
\end{array}
\]
Here $g_{i\overline{j}}^{T}=g^{T}(\frac{\partial}{\partial z^{i}}%
,\frac{\partial}{\partial\overline{z}^{j}})=d\eta(\frac{\partial}{\partial
z^{i}},\Phi\frac{\partial}{\partial\overline{z}^{j}})=2h_{i\overline{j}}.$
Furthermore,
\begin{equation}%
\begin{array}
[c]{c}%
R_{i\overline{j}}^{T}=-\frac{\partial^{2}}{\partial z^{i}\partial\overline
{z}^{j}}\log\det(g_{\alpha\overline{\beta}}^{T})
\end{array}
\label{A}%
\end{equation}
and the transversal Ricci form $\rho^{T}$
\[%
\begin{array}
[c]{c}%
\rho^{T}=\mathrm{Ric}^{\mathrm{T}}(J\cdot,\cdot)=-\sqrt{-1}R_{i\overline{j}%
}^{T}dz^{i}\wedge d\overline{z}^{j}.
\end{array}
\]
Finally, we define the basic form $\gamma$ in Sasakian manifold so that%
\[
i(\xi)\gamma=0\text{\ \textrm{and} }\mathcal{L}_{\xi}\gamma=0.
\]
In this case the closure of the $1$-parameter subgroup of the isometry group
of $(M,g)$ is isomorphic to a torus $\mathbb{T}^{k}$, for some positive
integer $k$ called the rank of the Sasakian structure. In particular,
irregular Sasakian manifolds have at least a $\mathbb{T}^{2}$ isometry.

The first structure theory of Sasakian manifolds reads as

\begin{proposition}
\label{P21}(\cite{ru}, \cite{sp}, \cite{bg}) Let $(M,\eta,\xi,\Phi,g)$ be a
compact quasi-regular Sasakian manifold of dimension $2n+1$ and $Z$ denote the
space of leaves of the characteristic foliation $\mathcal{F}_{\xi}$ (just as
topological space). Then

\begin{enumerate}
\item $Z$ carries the structure of a Hodge orbifold $\mathcal{Z=}(Z,\Delta)$
with an orbifold K\"{a}hler metric $h$ and K\"{a}hler form $\omega$ which
defines an integral class in $H_{orb}^{2}(Z,\mathbb{Z}\mathbf{)}$ in such a
way that $\pi:(M,g,\omega)\rightarrow(Z,h,\omega_{h})$ is an orbifold
Riemannian submersion, and a principal $\mathbb{S}^{1}$-orbibundle
($V$-bundle) over $Z.$ Furthermore,it satisfies $\frac{1}{2}d\eta=\pi^{\ast
}(\omega_{h}).$ The fibers of $\pi$ are geodesics.

\item $Z$ is also a $\mathbb{Q}$-factorial, polarized, normal projective
algebraic variety.

\item The orbifold $Z$ is Fano if and only if $\mathrm{Ric}_{g}>-2$. In this
case $Z$ as a topological space is simply connected; and as an algebraic
variety is uniruled with Kodaira dimension $-\infty$.

\item $(M,\xi,g)$ is Sasaki-Einstein if and only if $(Z,h)$ is
K\"{a}hler-Einstein with scalar curvature $4n(n+1).$

\item If $(M,\eta,\xi,\Phi,g)$ is regular then the orbifold structure is
trivial and $\pi$ is a principal circle bundle over a smooth projective
algebraic variety.

\item As real cohomology classes, there is a relation between the first basic
Chern class and the first orbifold Chern class
\[
c_{1}^{B}(M):=c_{1}(\emph{F}_{\xi})=\pi^{\ast}c_{1}^{orb}(Z).
\]

\item Conversely, let $\pi:M\rightarrow Z$ be a $\mathbb{S}^{1}$-orbibundle
over a compact Hodge orbifold $(Z,h)$ whose first Chern class is an integral
class defined by $[\omega_{Z}]$, and $\eta$ be a $1$-form with $\frac{1}%
{2}d\eta=\pi^{\ast}\omega_{Z}$. Then $(M,\pi^{\ast}h+\eta\otimes\eta)$ is a
Sasakian manifold if all the local uniformizing groups inject into the
structure group $U(1).$
\end{enumerate}
\end{proposition}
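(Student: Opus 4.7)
The plan is to exploit the quasi-regular hypothesis to upgrade the Reeb flow to an honest $\mathbb{S}^{1}$-action, push every piece of Sasakian structure down to the leaf space via the slice theorem, and then verify each of the seven items in turn; the converse is handled by reversing this procedure as a circle orbibundle construction.

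First I would argue that since every orbit of $\xi$ is closed, the Reeb flow integrates to a locally free isometric $\mathbb{S}^{1}$-action on the compact manifold $M$, with finite cyclic isotropy at every point. The standard slice theorem for compact Lie group actions endows $Z = M/\mathcal{F}_{\xi}$ with an orbifold atlas whose local uniformizing groups are precisely these isotropy groups; the transverse K\"ahler metric $g^{T}$ and the basic $2$-form $\tfrac{1}{2}d\eta$ are both $\mathbb{S}^{1}$-invariant (since $\xi$ is Killing and $\iota_{\xi}d\eta = 0$), so they descend to an orbifold K\"ahler metric $(h,\omega_{h})$ on $Z$ with $\pi^{\ast}\omega_{h} = \tfrac{1}{2}d\eta$. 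This settles (1) modulo integrality, and also (5). The integrality and (2) follow because $\eta$ is a connection $1$-form on the principal $\mathbb{S}^{1}$-orbibundle $\pi: M \to Z$, so its curvature represents $2\pi$ times an integral class in $H^{2}_{\mathrm{orb}}(Z,\mathbb{Z})$; the Baily orbifold Kodaira embedding theorem then yields projectivity, and $\mathbb{Q}$-factoriality is automatic for quotient singularities.

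For (3) and (6) I would use the identity $\mathrm{Ric}^{T} = \mathrm{Ric} + 2g^{T}$ from (\ref{c}) together with the coordinate formula (\ref{A}): the orbifold first Chern class $c_{1}^{\mathrm{orb}}(Z)$ is represented by $\rho^{T}/2\pi$, which pulls back to the basic Chern class $c_{1}^{B}(M)$, giving the relation displayed in (6); combined with (\ref{a}) the positivity $\mathrm{Ric}_{g} > -2$ translates into $\mathrm{Ric}^{T} > 0$ on $D$ and hence into $c_{1}^{\mathrm{orb}}(Z) > 0$, so $Z$ is Fano, while uniruledness and simple connectedness of $Z$ follow from standard structure results for klt Fano varieties. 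For (4), the transverse Einstein equation $\mathrm{Ric}^{T} = 2(n+1)g^{T}$ is, via (\ref{c}), equivalent to $\mathrm{Ric}_{h} = 2(n+1)h$, i.e.\ constant scalar curvature $4n(n+1)$ on $Z$, and the orbifold submersion $\pi$ transports the Einstein condition in both directions.

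For the converse (7), given $(Z,h)$ Hodge with $[\omega_{Z}]$ integral, I would build $M$ as the unit circle orbibundle in the orbifold line bundle with $c_{1} = [\omega_{Z}]$, equipped with a connection $\eta$ whose curvature equals $2\pi^{\ast}\omega_{Z}$; the hypothesis that every local uniformizing group injects into $U(1)$ is exactly what is needed so that the total space is a smooth manifold rather than merely an orbifold, because in each chart $\widetilde{V}/\Gamma$ the finite group $\Gamma$ then acts freely on the $\mathbb{S}^{1}$-fiber. Verification that $(M,\pi^{\ast}h + \eta\otimes\eta)$ is Sasakian reduces to checking that the cone $(C(M),dr^{2}+r^{2}g)$ is K\"ahler, which follows from the K\"ahler condition on $Z$ and the connection identity $d\eta = 2\pi^{\ast}\omega_{Z}$. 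The hardest part of the whole argument will be the orbifold bookkeeping: confirming that the slice theorem really produces a well-defined orbifold atlas across patches, carefully matching basic de Rham classes with pullbacks of orbifold Chern classes, and invoking Baily's embedding in the klt (rather than smooth) setting.
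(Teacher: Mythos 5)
The paper itself offers no proof of this proposition: it is quoted as the first structure theorem with citations to Rukimbira, Sparks, and Boyer--Galicki, and your outline reproduces precisely the standard argument of those references --- the quasi-regular Reeb flow closing up to a locally free isometric $\mathbb{S}^{1}$-action, the slice theorem producing the orbifold quotient onto which the transverse K\"ahler structure descends, $\eta$ viewed as a connection form giving integrality of $[\omega_{h}]$ in $H^{2}_{orb}(Z,\mathbb{Z})$ and Baily's orbifold embedding theorem giving projectivity, the curvature identities $\mathrm{Ric}^{\mathrm{T}}=\mathrm{Ric}+2g^{T}$ and $R(X,\xi)Y=g(\xi,Y)X-g(X,Y)\xi$ handling items (3), (4), (6), and the inverse orbifold Boothby--Wang construction with the $U(1)$-injectivity condition ensuring smoothness of the total space for (7). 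Your proposal is therefore correct and follows essentially the same route as the cited sources.
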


\subsection{Log Pair with the KLT Foliation Singularities}

In this subsection, we follow the notations as in \cite{k1}, \cite{tw1},
\cite{clw} and references therein. Let $(M,\xi,\alpha_{\xi})$ be a compact
quasi-regular Sasakian $5$-manifold and the leave space $Z:=M/\mathbb{S}^{1}$
be an orbifold K\"{a}hler surface. Denote by the quotient map
\[
\pi:M\rightarrow Z
\]
as before, and call such a $5$-manifold $(M,\xi)$ an $\mathbb{S}^{1}%
$-orbibundle. More precisely, $M$ admits a locally free, effective
$\mathbb{S}^{1}$-action
\[%
\begin{array}
[c]{c}%
\alpha_{\xi}:\mathbb{S}^{1}\times M\rightarrow M
\end{array}
\]
such that $\alpha_{\xi}(t)$ is orientation-preserving, for every
$t\in\mathbb{S}^{1}$. Since $M$ is compact, $\alpha_{\xi}$ is proper and the
isotropy group $\Gamma_{p}$ of every point $p\in M$ is finite.

\begin{definition}
The principal orbit type $M_{\operatorname{reg}}$ corresponds to points in
$(M,\xi,\alpha_{\xi})$ with the trivial isotropy group and
$M_{\operatorname{reg}}\rightarrow M_{\operatorname{reg}}/\mathbb{S}^{1}$ is a
principle $\mathbb{S}^{1}$-bundle. Furthermore, the orbit $\mathbb{S}_{p}^{1}$
of a point $p\in M$ is called a regular fiber if $p\in M_{\operatorname{reg}}%
$, and a singular fiber otherwise. In this case, $M_{\mathrm{sing}}%
/\mathbb{S}^{1}\simeq\Sigma^{orb}(Z).$
\end{definition}

\begin{proposition}
(\cite{clw}) Let $(M,\eta,\xi,\Phi,g)$ be a compact quasi-regular Sasakian
$5$-manifold and $Z$ be its leave space of the characteristic foliation. Then
$Z$ is a $\mathbb{Q}$-factorial normal projective algebraic orbifold surface satisfying

\begin{enumerate}
\item if its leave space $(Z,\emptyset)$ has at least codimension two fixed
point set of every non-trivial isotropy subgroup. That is to say $Z$ is
well-formed, then $Z$ has isolated singularities of a finite cyclic quotient
of $\mathbb{C}^{2}$ and the action is
\[%
\begin{array}
[c]{c}%
\mu_{\mathbb{Z}_{r}}:(z_{1},z_{2})\rightarrow(\zeta^{a}z_{1},\zeta^{b}z_{2}),
\end{array}
\]
where $\zeta$ is a primitive $r$-th root of unity. We denote the cyclic
quotient singularity by $\frac{1}{r}(a,b)$ with $(a,r)=1=(b,r)$. In
particular, the action can be rescaled so that every cyclic quotient
singularity corresponds to a $\frac{1}{r}(1,a)$-type singularity with
$(r,a)=1,$ $\zeta=e^{2\pi\sqrt{-1}\frac{1}{r}}$. In particular, it is klt
(Kawamata log terminal) singularities. Moreover, the corresponding
singularities in $(M,\eta,\xi,\Phi,g)$\ is called\ foliation cyclic quotient
singularities of type\textbf{\ }%
\[%
\begin{array}
[c]{c}%
\frac{1}{r}(1,a)
\end{array}
\]
at a singular fibre $\mathbb{S}_{p}^{1}$ in $M$.

\item if its leave space $(Z,\Delta)$ has the codimension one fixed point set
of some non-trivial isotropy subgroup. Then the action is
\[%
\begin{array}
[c]{c}%
\mu_{\mathbb{Z}_{r}}:(z_{1},z_{2})\rightarrow(e^{2\pi\sqrt{-1}\frac{a_{1}%
}{r_{1}}}z_{1},e^{2\pi\sqrt{-1}\frac{a_{2}}{r_{2}}}z_{2}),
\end{array}
\]
for some positive integers $r_{1,}$ $r_{2}$ whose least common multiplier is
$r$, and $a_{i},$ $i=1,$ $2$ are integers coprime to $r_{i},$ $i=1,$ $2$. Then
the foliation singular set contains some $3$-dimensional submanifolds of $M.$
More precisely, the corresponding singularities in $(M,\eta,\xi,\Phi
,g)$\textbf{\ }is called\textbf{\ }the Hopf $\mathbb{S}^{1}$-orbibundle over a
Riemann surface $\Sigma_{h}.$
\end{enumerate}
\end{proposition}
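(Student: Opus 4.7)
The plan is to combine the first structure theorem (Proposition 2.1) with a local slice analysis of the Reeb $\mathbb{S}^{1}$-action near singular orbits. Proposition 2.1 already yields that $Z$ carries the structure of a Hodge orbifold, and that $Z$ is a $\mathbb{Q}$-factorial, polarized, normal projective algebraic variety; what remains is to classify the local orbifold structure at singular points and relate it to the foliation singularities on $M$. Around any point $p\in M$ whose isotropy group $\Gamma_{p}$ is a nontrivial cyclic group $\mathbb{Z}_{r}$, the equivariant slice theorem applied to the proper $\mathbb{S}^{1}$-action produces an equivariant diffeomorphism of a tubular neighborhood of $\mathbb{S}^{1}\!\cdot p$ onto a twisted bundle $\mathbb{S}^{1}\times_{\mathbb{Z}_{r}}V$, where $V$ is the normal slice to the orbit. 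Because the Sasakian structure restricts to a $\Gamma_{p}$-invariant transverse K\"ahler structure on $V$, one identifies $V$ with $\mathbb{C}^{2}$ carrying its standard Hermitian form, and $\Gamma_{p}$ acts by a unitary representation diagonalizable as $(z_{1},z_{2})\mapsto(\zeta^{a_{1}}z_{1},\zeta^{a_{2}}z_{2})$ with $\zeta=e^{2\pi\sqrt{-1}/r}$.

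For part (1), the hypothesis that every nontrivial isotropy has fixed-point set of codimension at least two in $M$ translates, after the quotient by $\mathbb{S}^{1}$, into the statement that no $\Gamma_{p}$-eigenline is pointwise fixed; equivalently $\gcd(a_{i},r)=1$ for $i=1,2$. Thus $Z$ has an isolated singularity of type $\tfrac{1}{r}(a_{1},a_{2})$. Replacing $\zeta$ by $\zeta^{a_{1}^{-1}}$ (well-defined since $\gcd(a_{1},r)=1$) normalizes the action to $\tfrac{1}{r}(1,a)$ with $\gcd(a,r)=1$. The klt property is then a classical computation: the Hirzebruch--Jung resolution of $\tfrac{1}{r}(1,a)$ is a chain of rational curves whose discrepancies, read off from the continued fraction expansion of $r/a$, all exceed $-1$.

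For part (2), the existence of a nontrivial isotropy $\mathbb{Z}_{r}$ with codimension-one fixed-point set in $M$ forces the slice representation to act trivially on a coordinate hyperplane; decomposing the cyclic action through the subgroups generated by the individual diagonal entries yields the form $(z_{1},z_{2})\mapsto(e^{2\pi\sqrt{-1}a_{1}/r_{1}}z_{1},e^{2\pi\sqrt{-1}a_{2}/r_{2}}z_{2})$ with $\mathrm{lcm}(r_{1},r_{2})=r$ and $\gcd(a_{i},r_{i})=1$. The resulting fixed locus in $M$ is a $3$-dimensional $\mathbb{S}^{1}$-invariant smooth submanifold on which the induced Reeb action is again quasi-regular and Sasakian; invoking the first structure theorem in dimension three exhibits it as a principal $\mathbb{S}^{1}$-orbibundle, i.e.\ a Hopf $\mathbb{S}^{1}$-orbibundle over a compact Riemann surface $\Sigma_{h}$.

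The main technical step is to verify rigorously that the $\mathbb{Z}_{r}$-representation on the transverse slice is unitary and diagonalizable with the asserted coprimality conditions. This uses that the transverse metric $g^{T}$ and the complex structure $\Phi|_{D}$ are preserved by the Sasakian automorphism group (hence by the isotropy) and that compact subgroups of $U(2)$ are simultaneously diagonalizable after a unitary change of basis. Once the slice representation is in normal form, the remaining outputs --- $\mathbb{Q}$-factoriality and projectivity of $Z$, and the klt nature of $\tfrac{1}{r}(1,a)$ --- follow directly from Proposition 2.1 together with the classical theory of cyclic quotient surface singularities.
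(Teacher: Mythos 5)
The paper does not actually prove this proposition: it is imported verbatim from \cite{clw}, so there is no internal argument here to compare against. Your reconstruction via the equivariant slice theorem is the natural route and is essentially sound: the isotropy $\Gamma_{p}\cong\mathbb{Z}_{r}$ acts on the slice $V\cong D_{p}\cong\mathbb{C}^{2}$ preserving $g^{T}$ and $\Phi|_{D}$, hence unitarily and diagonalizably; the well-formed versus non-well-formed dichotomy then reduces to the arithmetic of the weights, the klt property of $\tfrac{1}{r}(1,a)$ is classical, and the $3$-dimensional fixed locus in part (2) inherits a quasi-regular Sasakian structure (it is totally geodesic, $\xi$-tangent and $\Phi$-invariant), so the first structure theorem in dimension three gives the $\mathbb{S}^{1}$-orbibundle over $\Sigma_{h}$.

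Two statements do need repair, though neither is fatal. First, in part (1) you claim that well-formedness is equivalent to ``no $\Gamma_{p}$-eigenline is pointwise fixed,'' and that this equals $\gcd(a_{i},r)=1$. Pointwise fixing of an eigenline by the \emph{full} group only says $r\nmid a_{i}$, which is weaker than coprimality; the correct translation applies the hypothesis to every nontrivial \emph{subgroup}: the cyclic subgroup of order $\gcd(a_{i},r)$ fixes the $z_{j}$-eigenline ($j\neq i$) pointwise, so well-formedness forces $\gcd(a_{i},r)=1$. The same care is needed in part (2): it is a nontrivial subgroup of the isotropy, not the whole slice representation, that acts trivially on a coordinate line, which is exactly what produces the branch divisor with its ramification index; your final normal form with $\mathrm{lcm}(r_{1},r_{2})=r$ and $\gcd(a_{i},r_{i})=1$ is nevertheless correct, using faithfulness of the slice representation. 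Second, the codimension bookkeeping: the ``codimension one fixed point set'' in the hypothesis refers to the leaf space $Z$ (equivalently to the transverse slice), and corresponds to a $3$-dimensional, i.e.\ codimension-two, fixed submanifold of the $5$-manifold $M$; your phrase ``codimension-one fixed-point set in $M$'' contradicts your own (correct) later assertion that the fixed locus in $M$ is $3$-dimensional. With these adjustments the argument goes through.
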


Next we recall that a log pair $(Z,\Delta)$ is consisting of a connected
compact projective normal variety $Z$ and an effective $\mathbb{Q}$-divisor
$\Delta$ such that $(K_{Z}+\Delta)$ is $\mathbb{Q}$-Cartier. Along this
direction, we define a log pair $(M,\Delta^{T})$ with klt foliation singularities.

\begin{definition}
Let $(Z,\Delta)$ be a log pair with klt singularities and a log minimal
resolution $\phi:(Y,D_{Y})\rightarrow(Z,\Delta)$.

\begin{enumerate}
\item It means that there exists a unique $\mathbb{Q}$-divisor $D_{Y}=\sum
_{j}b_{j}D_{j}$ with $b_{j}>-1$ such that
\[
K_{Y}=\phi^{\ast}(K_{Z}+\Delta)+D_{Y},\text{ }D_{Y}=E+\Delta_{Y}^{\prime},
\]
where the union of the $\phi$-exceptional divisor $E$ and the strict transform
$\Delta_{Y}^{\prime}$ of $\Delta$ is a simple normal crossing divisor defined
by
\[%
\begin{array}
[c]{c}%
E:=\sum_{0<b_{j}}b_{j}D_{j}=\phi^{-1}(Z_{\text{\textrm{sing}}})\text{\ }%
\end{array}
\]
and
\[%
\begin{array}
[c]{c}%
\Delta_{Y}^{\prime}:=\phi_{\ast}^{-1}\Delta=\sum_{-1<b_{j}\leq0}b_{j}D_{j}.
\end{array}
\]

\item A conic K\"{a}hler metric $\omega$ on a compact K\"{a}hler $n$-manifold
$Z$ along the irreducible divisor $D$ with cone angle $2\pi\beta,$ $0<\beta<1$
is asymptotically equivalent to the model metric on $\mathbb{C}^{n}$
\[%
\begin{array}
[c]{c}%
\omega_{\beta}=\sqrt{-1}\frac{dz_{1}\wedge d\overline{z}_{1}}{|z_{1}%
|^{2(1-\beta)}}+\sum_{j=2}^{n}dz_{j}\wedge d\overline{z}_{j}.
\end{array}
\]
It is called conic K\"{a}hler-Einstein
\[
\mathrm{Ric}(\omega):=\beta\omega(t)+(1-\beta)[D]
\]
in the sense of currents with $D\thicksim-|K_{M}|$. That is, locally near a
point $p\in Z,$ there is a holomorphic coordinate system $(V,z_{1}%
,\cdots,z_{n})$ such that $D\cap V=\{z\in V;$ $z_{1}=0\}.$
\end{enumerate}
\end{definition}

Now given a log minimal resolution $\phi:(Y,D_{Y})\rightarrow(Z,\Delta)$ as
above. Since $Y$ is nonsingular, by the first Sasakian structure theorem
again, there exists a submersion $\widetilde{\pi}:(\widetilde{M}%
,D^{T})\rightarrow(Y,D_{Y})$ with the regular Sasakian manifold $(\widetilde
{M},D^{T}).$ Then we define the so-called log pair with the klt foliation
singularities as follows:

\begin{definition}
Let $(M,\xi,\eta,g)$ be a compact quasi-regular Sasakian manifold with the
submersion $\pi:(M,\Delta^{T})\rightarrow(Z,\Delta)$. Given $\phi,$ $\pi$ and
$\widetilde{\pi}$ as above, the transverse log resolution is defined by
$\widetilde{\phi}:(\widetilde{M},D^{T})\rightarrow(M,\Delta^{T})$ so that it
is basic and the following diagram
\[%
\begin{array}
[c]{ccc}%
(\widetilde{M},\widetilde{E}^{T},(\Delta_{Y}^{\prime})^{T},D_{\widetilde{M}%
}^{T}) & \overset{\widetilde{\phi}}{\longrightarrow} & (M,\Delta^{T})\\
\downarrow\widetilde{\pi} & \circlearrowright & \downarrow\pi\\
(Y,E,\Delta_{Y}^{\prime},D_{Y}) & \overset{\phi}{\longrightarrow} & (Z,\Delta)
\end{array}
\]
is commutative $\pi\circ\widetilde{\phi}=\phi\circ\widetilde{\pi}.$

\begin{enumerate}
\item We call $(M,\Delta^{T})$ is a log pair with klt foliation singularities
if there exists a unique transverse $\mathbb{Q}$-divisor
\[%
\begin{array}
[c]{c}%
D_{\widetilde{M}}^{T}=\sum_{j}b_{j}D_{j}^{T}%
\end{array}
\]
with $b_{j}>-1$ such that
\[%
\begin{array}
[c]{c}%
K_{\widetilde{M}}^{T}=\widetilde{\phi}^{\ast}(K_{M}^{T})+D_{\widetilde{M}}%
^{T},\text{ }D_{\widetilde{M}}^{T}=\widetilde{E}^{T}+(\Delta_{Y}^{\prime}%
)^{T}.
\end{array}
\]
That is
\[%
\begin{array}
[c]{c}%
K_{\widetilde{M}}^{T}=\widetilde{\phi}^{\ast}(K_{M}^{T})+\sum_{-1<b_{j}\leq
0}b_{j}D_{j}^{T}+\widetilde{E}^{T}.
\end{array}
\]
Here the $\widetilde{\phi}$-exceptional divisor $\widetilde{E}^{T}$
\[%
\begin{array}
[c]{c}%
\widetilde{\phi}^{-1}(\pi^{-1}(Z_{\text{\textrm{sing}}}))=\widetilde{E}%
^{T}=\sum_{0<b_{j}}b_{j}D_{j}^{T}%
\end{array}
\]
with $\pi(M_{\text{\textrm{sing}}})=M_{\text{\textrm{sing}}}/\mathbb{S}%
^{1}=\Sigma^{orb}(Z)$ and the strict transform $(\Delta_{Y}^{\prime})^{T}$ of
the transverse branch divisor $\Delta^{T}$
\[%
\begin{array}
[c]{c}%
\widetilde{\phi}_{\ast}^{-1}(\Delta^{T})=\Delta^{\prime T}=\sum_{-1<b_{j}%
\leq0}b_{j}D_{j}^{T}.
\end{array}
\]
Set $a_{j}:=-b_{j},$ if $-1<b_{j}\leq0.$ Then
\[%
\begin{array}
[c]{c}%
K_{Y}^{-1}=\phi^{\ast}(K_{Z}^{-1}-\Delta)+\sum_{0\leq a_{j}<1}a_{j}D_{j}-E
\end{array}
\]
and%
\[%
\begin{array}
[c]{c}%
(K_{\widetilde{M}}^{T})^{-1}-\sum_{0\leq a_{j}<1}a_{j}D_{j}^{T}=\widetilde
{\phi}^{\ast}((K_{M}^{T})^{-1}))-(\Delta_{Y}^{\prime})^{T}-\widetilde{E}%
^{T}=\widetilde{\phi}^{\ast}((K_{M}^{T})^{-1}-\Delta^{T}).
\end{array}
\]

\item $(M,\Delta^{T})$ is a log Fano quasi-regular Sasakian manifold if
$(K_{M}^{T})^{-1}-\Delta^{T}$ is a basic ample $\mathbb{Q}$-line bundle. That
is
\[
c_{1}^{B}(M,\Delta^{T})=c_{1}^{B}(M)-c_{1}^{B}(\Delta^{T})>0.
\]

\item $(\widetilde{M},D^{T})$ is a log Fano regular Sasakian manifold if
$D^{T}$ is a normal crossing smooth divisors such that $L^{T}$ is a basic
ample $\mathbb{Q}$-line bundle:
\[%
\begin{array}
[c]{c}%
L^{T}:=(K_{\widetilde{M}}^{T})^{-1}-\sum_{0\leq a_{j}<1}a_{j}D_{j}%
^{T}=\widetilde{\phi}^{\ast}((K_{M}^{T})^{-1}-\Delta^{T}).
\end{array}
\]

\item Let $(\widetilde{M},D^{T})$ be a log Fano regular Sasakian manifold. A
conic Sasakian metric $\widetilde{\omega}$ is called a conic Sasaki-Einstein
on $(\widetilde{M},D^{T})$ with the transverse cone angle $2\pi(1-a_{j})$
along $D_{j}^{T}$ with $\widetilde{\pi}_{\ast}^{-1}(D_{Y})=D_{j}^{T}$ if
\[%
\begin{array}
[c]{c}%
\widetilde{\mathrm{Ric}}^{\mathrm{T}}(\widetilde{\omega})=\widetilde{\omega
}+\sum_{0<a_{j}<1}a_{j}D_{j}^{T}%
\end{array}
\]
such that $2\pi(1-a_{j})$ is the cone angle along $D_{Y}$ with $\widetilde
{\pi}_{\ast}^{-1}(D_{Y})=D_{j}^{T}.$
\end{enumerate}
\end{definition}

\begin{remark}
\label{R21} If there is a Sasaki-Einstein metric $\omega$ on a log Fano
quasi-regular Sasakian manifold $(M,\Delta^{T})$ with klt foliation
singularities, then $\widetilde{\phi}^{\ast}\omega$ will be a degenerate conic
Sasaki-Einstein metric on a log Fano regular Sasakian manifold $\widetilde{M}$
with the transverse cone angle $2\pi(1-a_{j})$ along $D_{j}^{T}$:
\[%
\begin{array}
[c]{c}%
\mathrm{Ric}^{\mathrm{T}}(\widetilde{\phi}^{\ast}\omega)=\widetilde{\phi
}^{\ast}\omega+\sum_{0<a_{j}<1}a_{j}D_{j}^{T}.
\end{array}
\]
Then, for a log Fano $(M,\Delta^{T})$ with such a resolution $\widetilde{\phi
}:(\widetilde{M},D^{T})\rightarrow(M,\Delta^{T})$, one might expect that we
can turn around to use this to construct a Sasaki-Einstein metric on a log
Fano quasi-regular Sasakian manifold $M$ with the foliation singularities
$\Delta^{T}$. Indeed, this is true due to Chang-Han-Tie \cite{cht} and
Tian-Wang \cite{tw2} on the work of the compactness and existence of conic
Sasaki-Einstein metrics.
\end{remark}

\subsection{The Twisted and Conic Sasaki-Ricci Flow}

Let $(M,D^{T})$ is a log Fano regular Sasakian manifold with a smooth basic
divisors $D^{T}$. We consider the transverse K\"{a}hler class which is not
proportional to the basic first Chern class such that
\[
c_{1}^{B}(M)-\beta\lbrack\omega_{0}]_{B}=[\alpha]_{B}\neq0.
\]
Then for any fixed basic closed $(1,1)$-form $\theta\in\lbrack\alpha]_{B}$, it
is natural to ask if there exists a unique transverse K\"{a}hler form
$\omega\in\lbrack\omega_{0}]_{B}$ so that
\[
\mathrm{Ric}^{\mathrm{T}}(\omega)=\beta\omega+\theta
\]
which is called a twisted Sasaki-Ricci metric. The corresponding twisted
Sasaki-Ricci flow reads as%
\begin{equation}%
\begin{array}
[c]{l}%
\frac{\partial}{\partial t}\omega(t)=-\mathrm{Ric}^{\mathrm{T}}(\omega
(t))+\beta\omega(t)+\theta
\end{array}
\label{2023-2}%
\end{equation}
for $0<\beta<1$.

Let $\left(  M,\eta,g_{0},\left(  1-\beta\right)  D^{T}\right)  $ be a
Sasakian manifold with the transverse cone angle $2\pi\beta$ along $D^{T}$ and
$D^{T}\thicksim-K_{M}^{T}$ such that
\[%
\begin{array}
[c]{c}%
c_{1}^{B}(M,\left(  1-\beta\right)  D^{T})=c_{1}^{B}(M)-\left(  1-\beta
\right)  c_{1}^{B}([D^{T}])=\beta\lbrack\omega_{0}]_{B}>0
\end{array}
\]
where $\left[  D^{T}\right]  $ is the line bundle induced by the transverse
divisor $D^{T}$ and $\omega_{0}=d\eta$ denotes the transverse K\"{a}hler form
of $g_{0}.$ Now we consider the so-called conic Sasaki-Ricci flow%
\[
\left\{
\begin{array}
[c]{lcl}%
\frac{\partial}{\partial t}\omega(t) & = & -\mathrm{Ric}^{\mathrm{T}}%
(\omega(t))+\beta\omega(t)+(1-\beta)[D^{T}],\\
\omega(0) & = & \omega_{\ast}.
\end{array}
\right.
\]
Here $\omega_{\ast}=\omega_{0}+\delta\sqrt{-1}\partial_{B}\overline{\partial
}_{B}||S^{T}||^{2\beta}$ and $S^{T}$ is the basic defined section of $D^{T}$
in transverse Fano log pair $(M,D^{T}),$ $[D^{T}]$ is the current of
integration along $D^{T}$.

\section{Twisted Sasaki-Ricci Flow on Transverse Log Fano Sasakian Manifolds}

\subsection{$L^{4}$-Bound of the Transverse Conic Ricci Curvature}

In this subsection, we consider the $L^{4}$-boundedness of the transverse
Ricci curvature along the twisted Sasaki-Ricci flow (\ref{2023-1}) as follows.

Let $\left(  M,\eta,g_{0},\left(  1-\beta\right)  D^{T}\right)  $ be a
Sasakian manifold with the transverse cone angle $2\pi\beta$ along $D^{T}.$ In
this article, we use the following twisted Sasaki-Ricci flow to approach the
conic Sasaki-Ricci flow (\ref{2023})
\begin{equation}%
\begin{array}
[c]{c}%
\left\{
\begin{array}
[c]{ccl}%
\frac{\partial\omega^{\epsilon}}{\partial t} & = & -\left(  \rho
_{\omega^{\epsilon}}^{T}-\theta_{\epsilon}\right)  +\beta\omega^{\epsilon}\\
\omega^{\epsilon}\left(  0\right)  & = & \widehat{\omega}_{0}^{\epsilon}%
\equiv\omega_{0}+\sqrt{-1}k\partial_{B}\overline{\partial}_{B}\chi_{\epsilon
}(\epsilon^{2}+\left\Vert S\right\Vert _{h}^{2}).
\end{array}
\right.
\end{array}
\label{2023-1}%
\end{equation}
Here $k$ is a sufficiently small positive number, $D^{T}=\left[  S=0\right]
$, $h$ is a Hermitian metric on $\left[  D^{T}\right]  $,
\[%
\begin{array}
[c]{c}%
\chi_{\epsilon}\left(  \epsilon^{2}+t\right)  =\frac{1}{\beta}\int_{0}%
^{t}\frac{\left(  \epsilon^{2}+r\right)  ^{\beta}-\epsilon^{2\beta}}{r}dr
\end{array}
\]
and
\begin{equation}%
\begin{array}
[c]{c}%
\theta_{\epsilon}=\left(  1-\beta\right)  [\omega_{0}+\sqrt{-1}\partial
_{B}\overline{\partial}_{B}\log(\left\Vert S\right\Vert _{h}^{2}+\epsilon
^{2})].
\end{array}
\label{2023-4}%
\end{equation}
Then the parabolic Monge-Ampere equations for potentials are
\begin{equation}%
\begin{array}
[c]{c}%
\left\{
\begin{array}
[c]{lcl}%
\frac{\partial\varphi_{\varepsilon}}{\partial t} & = & \log\frac
{\omega_{\varphi_{\varepsilon}}^{n}\wedge\eta_{0}}{\omega_{\varepsilon}%
^{n}\wedge\eta_{0}}+F_{\varepsilon}+\beta(k\chi+\varphi_{\varepsilon}),\\
\varphi_{\varepsilon}(0) & = & c_{\varepsilon}(0).
\end{array}
\right.
\end{array}
\label{2023-1A}%
\end{equation}
Here $c_{\varepsilon}(0)$ are uniformly bounded for $\varepsilon$
\[%
\begin{array}
[c]{c}%
c_{\varepsilon}(0)=\frac{1}{\beta}\int_{0}^{\infty}e^{-\beta s}||\nabla
^{T}u_{\varepsilon}(s)||_{L^{2}}^{2}ds-\frac{1}{V}\int_{M}F_{\varepsilon}%
d\mu_{\varepsilon}-\frac{k\beta}{V}\int_{M}\chi d\mu_{\varepsilon}%
\end{array}
\]
and
\[%
\begin{array}
[c]{c}%
F_{\varepsilon}=F_{0}+\log[\frac{\omega_{_{\varepsilon}}^{n}\wedge\eta_{0}%
}{\omega_{0}^{n}\wedge\eta_{0}}(|S^{T}|^{2}+\varepsilon^{2})^{1-\beta}]
\end{array}
\]
with $F_{0}$ is the Ricci potential of $\omega_{0}$ so that $-\mathrm{Ric}%
^{\mathrm{T}}(\omega_{0})+\omega_{0}=\sqrt{-1}\partial_{B}\overline{\partial
}_{B}F_{0}$ with $\int e^{-F_{0}}dV_{0}=V.$

It follows from the standard argument that we have the long-time existence of
the solution to the twisted Sasaki-Ricci flow (\ref{2023-1}). Note that
$\omega^{\epsilon}\left(  t\right)  $ evolves in the same K\"{a}hler class
$\left[  \omega_{0}\right]  $. So $\omega^{\epsilon}\left(  t\right)  $ could
be expressed in terms of the transverse K\"{a}hler potential%
\[%
\begin{array}
[c]{c}%
\omega^{\epsilon}\left(  t\right)  =\widehat{\omega}_{0}^{\epsilon}+\sqrt
{-1}\partial_{B}\overline{\partial}_{B}\varphi_{\epsilon}\left(  t\right)  .
\end{array}
\]
Take $u_{\epsilon}\left(  t\right)  $ to be the transverse twisted Ricci
potential, i.e.
\begin{equation}%
\begin{array}
[c]{c}%
-\left(  \rho_{\omega^{\epsilon}}^{T}-\theta_{\epsilon}\right)  +\beta
\omega^{\epsilon}=\sqrt{-1}\partial_{B}\overline{\partial}_{B}u_{\epsilon
}\left(  t\right)
\end{array}
\label{U1}%
\end{equation}
which is normalized by
\[%
\begin{array}
[c]{c}%
\frac{1}{V}\int_{M}\exp\left(  -u_{\epsilon}\left(  t\right)  \right)
d\mu_{\epsilon}\left(  t\right)  =1
\end{array}
\]
where $V=\int_{M}d\mu_{\epsilon}\left(  t\right)  $ and $d\mu_{\epsilon
}\left(  t\right)  =\left(  \omega^{\epsilon}\left(  t\right)  \right)
^{n}\wedge\eta$. It's easy to see $\int_{M}d\mu_{\epsilon}\left(  t\right)  $
is independent of the choice of $\epsilon$ and $t$.

\begin{proposition}
\label{UP2}We have the equality%
\[%
\begin{array}
[c]{c}%
\frac{1}{2}\square_{B}[\left(  \Delta_{B}u_{\epsilon}\left(  t\right)
\right)  ^{2}]=-\left\vert \nabla^{T}\Delta_{B}u_{\epsilon}\left(  t\right)
\right\vert _{\omega^{\epsilon}\left(  t\right)  }^{2}+\Delta_{B}u_{\epsilon
}\left(  t\right)  \langle\rho_{\omega^{\epsilon}}^{T}-\theta_{\epsilon}%
,\sqrt{-1}\partial_{B}\overline{\partial}_{B}u_{\epsilon}\left(  t\right)
\rangle_{\omega^{\epsilon}\left(  t\right)  }.
\end{array}
\]
Here $\Delta_{B}$ means the basic Laplacian with respect to $\omega^{\epsilon
}\left(  t\right)  $,$\ \square_{B}$ denotes the operator $(\frac{\partial
}{\partial t}-\Delta_{B})$.
\end{proposition}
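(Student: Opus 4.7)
The plan is to reduce the identity to the standard product rule $\frac{1}{2}\square(f^{2}) = f\,\square f - |\nabla f|^{2}$ applied to $f=\Delta_{B}u_{\epsilon}(t)$, where the key work is deriving the evolution equation $\square_{B}(\Delta_{B}u_{\epsilon}) = \langle\rho^{T}_{\omega^{\epsilon}}-\theta_{\epsilon},\sqrt{-1}\partial_{B}\bar{\partial}_{B}u_{\epsilon}\rangle_{\omega^{\epsilon}(t)}$.

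First, I would derive an evolution equation for the twisted Ricci potential $u_{\epsilon}(t)$. Taking $\partial_{t}$ of the definition $-(\rho^{T}_{\omega^{\epsilon}}-\theta_{\epsilon})+\beta\omega^{\epsilon}=\sqrt{-1}\partial_{B}\bar{\partial}_{B}u_{\epsilon}$ and using that $\theta_{\epsilon}$ is time-independent, together with the standard identity $\partial_{t}\rho^{T} = -\sqrt{-1}\partial_{B}\bar{\partial}_{B}(\mathrm{tr}_{\omega^{\epsilon}}\partial_{t}\omega^{\epsilon})$ and the flow equation which gives $\partial_{t}\omega^{\epsilon}=\sqrt{-1}\partial_{B}\bar{\partial}_{B}u_{\epsilon}$ and hence $\mathrm{tr}_{\omega^{\epsilon}}\partial_{t}\omega^{\epsilon}=\Delta_{B}u_{\epsilon}$, I obtain
\[
\sqrt{-1}\partial_{B}\bar{\partial}_{B}\bigl(\partial_{t}u_{\epsilon}-\Delta_{B}u_{\epsilon}-\beta u_{\epsilon}\bigr)=0,
\]
so $\partial_{t}u_{\epsilon}=\Delta_{B}u_{\epsilon}+\beta u_{\epsilon}+c(t)$ for a time-dependent constant $c(t)$ coming from the $L^{1}$-normalization.

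Second, I would compute $\partial_{t}(\Delta_{B}u_{\epsilon})$ via the variation of the basic Laplacian. Since $\partial_{t}g^{T}_{i\bar{j}}=(u_{\epsilon})_{i\bar{j}}$, differentiating $\Delta_{B}u_{\epsilon}=g^{i\bar{j}}(u_{\epsilon})_{i\bar{j}}$ gives
\[
\partial_{t}(\Delta_{B}u_{\epsilon}) = -\bigl|\sqrt{-1}\partial_{B}\bar{\partial}_{B}u_{\epsilon}\bigr|^{2}_{\omega^{\epsilon}} + \Delta_{B}(\partial_{t}u_{\epsilon}) = -\bigl|\sqrt{-1}\partial_{B}\bar{\partial}_{B}u_{\epsilon}\bigr|^{2}_{\omega^{\epsilon}} + \Delta_{B}^{2}u_{\epsilon} + \beta\Delta_{B}u_{\epsilon}.
\]
Subtracting $\Delta_{B}^{2}u_{\epsilon}$ yields $\square_{B}(\Delta_{B}u_{\epsilon}) = \beta\Delta_{B}u_{\epsilon} - |\sqrt{-1}\partial_{B}\bar{\partial}_{B}u_{\epsilon}|^{2}_{\omega^{\epsilon}}$. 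Now substituting $\sqrt{-1}\partial_{B}\bar{\partial}_{B}u_{\epsilon}=\beta\omega^{\epsilon}-\rho^{T}_{\omega^{\epsilon}}+\theta_{\epsilon}$ into one factor of the squared norm and pairing with $\omega^{\epsilon}$ to produce $\beta\Delta_{B}u_{\epsilon}$ cancels the first term and leaves precisely
\[
\square_{B}(\Delta_{B}u_{\epsilon})=\langle\rho^{T}_{\omega^{\epsilon}}-\theta_{\epsilon},\sqrt{-1}\partial_{B}\bar{\partial}_{B}u_{\epsilon}\rangle_{\omega^{\epsilon}(t)}.
\]

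Finally, I would apply the elementary identity $\frac{1}{2}\square_{B}(f^{2})=f\,\square_{B}f-|\nabla^{T}f|^{2}_{\omega^{\epsilon}(t)}$ with $f=\Delta_{B}u_{\epsilon}(t)$; substituting the formula for $\square_{B}(\Delta_{B}u_{\epsilon})$ yields the claim. The main technical point to check is the commutativity of $\partial_{t}$ and $\Delta_{B}$ modulo the metric-variation term in the transverse/basic category, i.e., that the first-structure-theorem framework allows us to treat $u_{\epsilon}$, $\rho^{T}_{\omega^{\epsilon}}$ and $\theta_{\epsilon}$ as basic objects so that the Kähler-Ricci flow computation on the leaf space transfers verbatim. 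No essential obstacle arises since $\theta_{\epsilon}$ is a fixed basic $(1,1)$-form and $u_{\epsilon}$ is basic by construction.
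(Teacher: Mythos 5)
Your argument is correct and follows essentially the same route as the paper's proof: you derive $\partial_t u_\epsilon=\Delta_B u_\epsilon+\beta u_\epsilon+c(t)$ from the flow and the definition of the twisted Ricci potential, compute the variation of $\Delta_B u_\epsilon$ to obtain $\square_B(\Delta_B u_\epsilon)=\langle\rho^T_{\omega^\epsilon}-\theta_\epsilon,\sqrt{-1}\partial_B\overline{\partial}_B u_\epsilon\rangle_{\omega^\epsilon(t)}$ (the $\beta\Delta_B u_\epsilon$ terms cancel exactly as in the paper), and finish with the product rule $\tfrac12\square_B(f^2)=f\,\square_B f-|\nabla^T f|^2$. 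No gaps beyond the normalization constant, which you correctly note is irrelevant here.
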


\begin{proof}
Due to $\frac{\partial\omega^{\epsilon}}{\partial t}=\sqrt{-1}\partial
_{B}\overline{\partial}_{B}u_{\epsilon}\left(  t\right)  $, there is a
constant $c\left(  t\right)  $ which depends only on the parameter $t$ such
that $\overset{\centerdot}{\varphi}_{\epsilon}\left(  t\right)  =u_{\epsilon
}\left(  t\right)  +c\left(  t\right)  $. Here after the quantity with the dot
$\cdot$ means its time-derivative. From
\[%
\begin{array}
[c]{ccl}%
\sqrt{-1}\partial_{B}\overline{\partial}_{B}\overset{\centerdot}{u}_{\epsilon}
& = & -\overset{\centerdot}{\rho}_{\omega^{\epsilon}}^{T}+\beta\overset
{\centerdot}{\omega}^{\epsilon}\\
& = & \sqrt{-1}\partial_{B}\overline{\partial}_{B}u_{\epsilon}\left(
t\right)  +\sqrt{-1}\partial_{B}\overline{\partial}_{B}\Delta_{B}u_{\epsilon
}\left(  t\right)  ,
\end{array}
\]
it could be deduced that $\square_{B}u_{\epsilon}=\beta u_{\epsilon
}-a_{\varepsilon}\left(  t\right)  $ for
\begin{equation}%
\begin{array}
[c]{c}%
a_{\varepsilon}\left(  t\right)  =\frac{\beta}{V}\int_{M}u_{\epsilon}\left(
t\right)  \exp\left(  -u_{\epsilon}\left(  t\right)  \right)  d\mu_{\epsilon
}\left(  t\right)  .
\end{array}
\label{2023-6}%
\end{equation}
Because
\[%
\begin{array}
[c]{c}%
\frac{\partial}{\partial t}\left(  \Delta_{B}u_{\epsilon}\right)
=\langle\left(  \rho_{\omega^{\epsilon}}^{T}-\theta_{\epsilon}\right)
-\beta\omega^{\epsilon},\sqrt{-1}\partial_{B}\overline{\partial}%
_{B}u_{\epsilon}\left(  t\right)  \rangle_{\omega^{\epsilon}\left(  t\right)
}+\Delta_{B}\left(  \Delta_{B}u_{\epsilon}+\beta u_{\epsilon}-a_{\varepsilon
}\left(  t\right)  \right)  ,
\end{array}
\]
we obtain
\[%
\begin{array}
[c]{c}%
\square_{B}\left(  \Delta_{B}u_{\epsilon}\right)  =\langle\left(  \rho
_{\omega^{\epsilon}}^{T}-\theta_{\epsilon}\right)  ,\sqrt{-1}\partial
_{B}\overline{\partial}_{B}u_{\epsilon}\left(  t\right)  \rangle
_{\omega^{\epsilon}\left(  t\right)  }.
\end{array}
\]
This implies that
\[%
\begin{array}
[c]{c}%
\frac{1}{2}\square_{B}[\left(  \Delta_{B}u_{\epsilon}\left(  t\right)
\right)  ^{2}]=-\left\vert \nabla^{T}\Delta_{B}u_{\epsilon}\left(  t\right)
\right\vert _{\omega^{\epsilon}\left(  t\right)  }^{2}+\Delta_{B}u_{\epsilon
}\left(  t\right)  \langle\rho_{\omega^{\epsilon}}^{T}-\theta_{\epsilon}%
,\sqrt{-1}\partial_{B}\overline{\partial}_{B}u_{\epsilon}\left(  t\right)
\rangle_{\omega^{\epsilon}\left(  t\right)  }.
\end{array}
\]

\end{proof}

\begin{proposition}
\label{UP3}The functions $\nabla^{T}\nabla^{T}u_{\epsilon}$, $\nabla
^{T}\overline{\nabla}^{T}u_{\epsilon}$, and $|\mathrm{Rm}_{\omega^{\epsilon
}\left(  t\right)  }^{\mathrm{T}}|_{\omega^{\epsilon}}$ all lie in
$L^{2}\left(  M,\omega^{\epsilon}\right)  $.
\end{proposition}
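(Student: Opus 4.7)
The plan is to establish each $L^{2}$ bound in turn: first on $\Delta_{B}u_{\epsilon}$, then on the full Hessian of $u_{\epsilon}$, and finally on $\mathrm{Rm}^{T}$. The three main ingredients are (i) the evolution identity of Proposition \ref{UP2}, (ii) the integrated transverse Bochner formula, and (iii) the Gauss--Bonnet--Chern identity in real dimension four, exploiting that the transverse complex dimension is exactly $n=2$.

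For the Laplacian bound, I would integrate the identity of Proposition \ref{UP2} against $d\mu_{\epsilon}(t)$ over $M$. Using \eqref{U1}, the pairing $\langle \rho^{T}_{\omega^{\epsilon}} - \theta_{\epsilon}, \sqrt{-1}\partial_{B}\overline{\partial}_{B} u_{\epsilon}\rangle_{\omega^{\epsilon}}$ simplifies to $\beta \Delta_{B}u_{\epsilon} - |\sqrt{-1}\partial_{B}\overline{\partial}_{B}u_{\epsilon}|^{2}_{\omega^{\epsilon}}$, so the right-hand side of the evolution identity for $(\Delta_{B}u_{\epsilon})^{2}$ is, up to a sign-definite piece, dominated by the Laplacian squared itself, and a Gronwall argument combined with the $C^{0}$ bound on $u_{\epsilon}$ coming from \eqref{2023-1A} produces $\int_{M}(\Delta_{B}u_{\epsilon}(t))^{2}d\mu_{\epsilon}(t) \le C$ for all $t$. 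For the Hessian bound, I would invoke the integrated transverse Bochner identity
\[
\int_{M} |\nabla^{T}\overline{\nabla}^{T}u_{\epsilon}|^{2}d\mu_{\epsilon} = \int_{M}(\Delta_{B}u_{\epsilon})^{2}d\mu_{\epsilon} - \int_{M}\mathrm{Ric}^{T}_{\omega^{\epsilon}}(\nabla u_{\epsilon},\overline{\nabla}u_{\epsilon})d\mu_{\epsilon},
\]
together with the analogous identity for $|\nabla^{T}\nabla^{T}u_{\epsilon}|^{2}$ that comes from the K\"ahler commutator $[\nabla_{i},\nabla_{\bar{j}}]$ applied to $u_{\epsilon}$. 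Substituting $\mathrm{Ric}^{T}_{\omega^{\epsilon}} = \beta\omega^{\epsilon} + \theta_{\epsilon} - \sqrt{-1}\partial_{B}\overline{\partial}_{B}u_{\epsilon}$ from \eqref{U1} splits the Ricci cross-term into (a) $\beta\int|\nabla u_{\epsilon}|^{2}d\mu_{\epsilon}$, controlled by the $C^{0}$ bound on $u_{\epsilon}$, (b) a smooth-$\theta_{\epsilon}$ contribution which is integrable for each fixed $\epsilon$, and (c) a cubic Hessian piece that is absorbed back into the left-hand side.

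For the curvature estimate, since the transverse K\"ahler dimension is $n=2$, the basic Gauss--Bonnet--Chern identity applied to the basic forms on $M$ (equivalently, descended via the submersion $\pi$ to the orbifold leaf space) yields
\[
\int_{M}|\mathrm{Rm}^{T}_{\omega^{\epsilon}}|^{2}d\mu_{\epsilon} = a\int_{M}|\mathrm{Ric}^{T}_{\omega^{\epsilon}}|^{2}d\mu_{\epsilon} + b\int_{M}(\mathrm{R}^{T}_{\omega^{\epsilon}})^{2}d\mu_{\epsilon} + C_{\mathrm{top}},
\]
where $a, b$ are the universal constants of the Gauss--Bonnet--Chern formula in dimension four and $C_{\mathrm{top}}$ depends only on the basic Chern classes of $(M, D^{T})$. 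Combined with $\mathrm{Ric}^{T}_{\omega^{\epsilon}} = \beta\omega^{\epsilon} + \theta_{\epsilon} - \sqrt{-1}\partial_{B}\overline{\partial}_{B}u_{\epsilon}$ and $\mathrm{R}^{T}_{\omega^{\epsilon}} = \mathrm{tr}_{\omega^{\epsilon}}\mathrm{Ric}^{T}_{\omega^{\epsilon}}$, the $L^{2}$ bounds on $\mathrm{Ric}^{T}$ and $\mathrm{R}^{T}$ reduce to the Hessian bound from the previous step and the (fixed-$\epsilon$) integrability of $\theta_{\epsilon}$, finishing the proof.

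The main obstacle is the Bochner step: the Ricci cross-term in the identity above is a priori sign-indefinite and not manifestly controllable in $L^{1}$, and the decisive move is to invoke the flow identity \eqref{U1} so that the potentially obstructive Hessian contribution is absorbed into the left-hand side, leaving only pieces already bounded. The passage from standard K\"ahler to the transverse Sasakian setting introduces only notational overhead via the basic calculus ($\Delta_{B}$, $\nabla^{T}$, basic $(1,1)$-forms), since every identity used descends directly from orbifold K\"ahler geometry on the leaf space through the Sasakian submersion $\pi$.
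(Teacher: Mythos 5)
Your overall skeleton --- Bochner identities for the Hessians after substituting (\ref{U1}), and a transverse Chern--Gauss--Bonnet identity in transverse complex dimension two for $\mathrm{Rm}^{\mathrm{T}}$ --- is exactly the standard machinery that the paper (following \cite{lz}) leaves implicit. But you miss the one point that constitutes the paper's entire written proof, and it is the point that gives the proposition its content: the $L^{2}$ bounds must be \emph{uniform in $\epsilon$ and $t$}, and the only delicate term is the twisting form $\theta_{\epsilon}$ of (\ref{2023-4}), which is not pointwise bounded as $\epsilon\rightarrow0$ near $D^{T}$. You dispose of it twice by saying it is ``integrable for each fixed $\epsilon$''; with that reading the proposition is vacuous (for fixed $\epsilon$ and $t$ everything in sight is smooth on a compact manifold, hence trivially in $L^{2}$), and the statement could not then be used as it is in Lemma \ref{L32}, Proposition \ref{UP4} and Theorem \ref{T31}, where the constants are required to be independent of $\epsilon$ and $t$. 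The paper's proof is precisely the observation that, since $\theta_{\epsilon}\geq0$ and $[\theta_{\epsilon}]$ is a fixed basic class, $\int_{M}|\theta_{\epsilon}|_{\omega^{\epsilon}(t)}^{2}d\mu_{\epsilon}(t)$ is controlled through the trace integral $\int_{M}tr_{\omega^{\epsilon}(t)}\theta_{\epsilon}\,d\mu_{\epsilon}(t)=n\int_{M}\theta_{\epsilon}\wedge(\omega^{\epsilon}(t))^{n-1}\wedge\eta_{0}$, a cohomological quantity bounded uniformly in $\epsilon$ and $t$ (and converging to $n\int_{D^{T}}$ as $\epsilon\rightarrow0$). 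The same nonnegativity-plus-cohomology device is what makes your Ricci cross term uniform, via $\int_{M}\theta_{\epsilon}(\nabla^{T}u_{\epsilon},\overline{\nabla}^{T}u_{\epsilon})d\mu_{\epsilon}\leq\sup|\nabla^{T}u_{\epsilon}|^{2}\int_{M}tr_{\omega^{\epsilon}}\theta_{\epsilon}\,d\mu_{\epsilon}$ together with Perelman's estimate of Proposition \ref{UP1}; and it is indispensable in your Gauss--Bonnet step, where $\mathrm{Ric}^{\mathrm{T}}=\beta\omega^{\epsilon}+\theta_{\epsilon}-\sqrt{-1}\partial_{B}\overline{\partial}_{B}u_{\epsilon}$ forces you to bound $\int_{M}|\theta_{\epsilon}|^{2}$ itself, uniformly.

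A second, smaller defect is your first step. Integrating Proposition \ref{UP2} and invoking Gronwall cannot give what you claim: by (\ref{U1}) the pairing equals $\beta\Delta_{B}u_{\epsilon}-|\nabla^{T}\overline{\nabla}^{T}u_{\epsilon}|^{2}$, so after multiplying by $\Delta_{B}u_{\epsilon}$ the cubic term $-\Delta_{B}u_{\epsilon}\,|\nabla^{T}\overline{\nabla}^{T}u_{\epsilon}|^{2}$ is not sign definite, and even granting a differential inequality of the form $\frac{d}{dt}X\leq CX+C$, Gronwall only yields bounds growing exponentially in $t$, not the uniform-in-$t$ bound needed when Lemma \ref{L32} integrates over $[t,t+1]$ for arbitrarily large $t$. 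The intended and much simpler route is again Proposition \ref{UP1}: tracing (\ref{U1}) gives $\Delta_{B}u_{\epsilon}=\beta n-(\mathrm{R}^{\mathrm{T}}-tr_{\omega^{\epsilon}}\theta_{\epsilon})$, which is uniformly bounded in $C^{0}$ in both $\epsilon$ and $t$. With these two repairs --- Perelman's estimates in place of Gronwall, and the cohomological (uniform) control of $\theta_{\epsilon}$ in place of fixed-$\epsilon$ integrability --- your outline does become the argument the paper has in mind.
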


\begin{proof}
It suffices to observe the inequality%
\[%
\begin{array}
[c]{c}%
\int_{M}\left\vert \theta_{\epsilon}\right\vert _{\omega^{\epsilon}\left(
t\right)  }^{2}d\mu_{\epsilon}\left(  t\right)  \leq\int_{M}tr_{\omega
^{\epsilon}\left(  t\right)  }\theta_{\epsilon}d\mu_{\epsilon}\left(
t\right)  =\int_{M}n\theta_{\epsilon}\wedge d\mu_{\epsilon}\left(  t\right)
\rightarrow n\int_{D^{T}}d\mu_{0}\left(  t\right)
\end{array}
\]
as $\epsilon\longrightarrow0^{+}$; hence, we have
\[%
\begin{array}
[c]{c}%
\int_{M}\left\vert \theta_{\epsilon}\right\vert _{\omega^{\epsilon}\left(
t\right)  }^{2}d\mu_{\epsilon}\left(  t\right)  \leq C.
\end{array}
\]

\end{proof}

\begin{remark}
Actually, the inequality of the first line in the proof as above is the equality.
\end{remark}

\begin{lemma}
\label{L32}For any $t\geq1$, there exists a positive constant $C$ which is
independent of $\epsilon$ and $t$ such that
\[%
\begin{array}
[c]{c}%
\int_{M\times\left[  t,t+1\right]  }\left\vert \nabla^{T}\Delta_{B}%
u_{\epsilon}\left(  t\right)  \right\vert _{\omega^{\epsilon}\left(  t\right)
}^{2}d\mu_{\epsilon}\left(  t\right)  \leq C.
\end{array}
\]

\end{lemma}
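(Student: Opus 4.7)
The plan is to integrate the pointwise identity of Proposition \ref{UP2} over $M\times[t,t+1]$ against the evolving volume $d\mu_\epsilon$, convert the heat-operator term into boundary contributions plus an evolving-measure error, and then control each of the resulting pieces uniformly in $\epsilon$ and $t$ by invoking Proposition \ref{UP3}.

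First, I integrate the identity of Proposition \ref{UP2} in space against $d\mu_\epsilon(s)$. Using both $\int_M \Delta_B f \, d\mu_\epsilon = 0$ for basic $f$ and the volume evolution $\partial_s d\mu_\epsilon = \Delta_B u_\epsilon \, d\mu_\epsilon$ (a consequence of $\partial_s\omega^\epsilon = \sqrt{-1}\partial_B\overline{\partial}_B u_\epsilon$), I rewrite the heat-operator integral as
\begin{equation*}
\int_M \square_B\bigl[(\Delta_B u_\epsilon)^2\bigr]\,d\mu_\epsilon(s) = \frac{d}{ds}\int_M (\Delta_B u_\epsilon)^2 \,d\mu_\epsilon(s) - \int_M (\Delta_B u_\epsilon)^3 \, d\mu_\epsilon(s).
\end{equation*}
Integrating the identity of Proposition \ref{UP2} over $M\times[t,t+1]$ and rearranging then yields
\begin{equation*}
2\int_t^{t+1}\!\!\int_M |\nabla^T \Delta_B u_\epsilon|_{\omega^\epsilon}^2\, d\mu_\epsilon\, ds = \Bigl[-\!\int_M (\Delta_B u_\epsilon)^2 d\mu_\epsilon\Bigr]_{s=t}^{s=t+1} + \int_t^{t+1}\!\!\int_M (\Delta_B u_\epsilon)^3 \,d\mu_\epsilon\, ds + 2\int_t^{t+1}\!\!\int_M (\Delta_B u_\epsilon)\,\langle \rho_{\omega^\epsilon}^T - \theta_\epsilon,\, \sqrt{-1}\partial_B\overline{\partial}_B u_\epsilon\rangle_{\omega^\epsilon} d\mu_\epsilon\, ds.
\end{equation*}

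The boundary terms are controlled by using the defining equation (\ref{U1}) to rewrite $\Delta_B u_\epsilon = -R^T + \mathrm{tr}_{\omega^\epsilon}\theta_\epsilon + n\beta$: Proposition \ref{UP3} supplies an $L^2$ bound on $|\mathrm{Rm}^{\mathrm{T}}|$ (hence on $R^T$) and on $\theta_\epsilon$, so $\int_M (\Delta_B u_\epsilon(s))^2 d\mu_\epsilon(s)$ is uniformly bounded in $s$ and $\epsilon$. For the cross term, the same relation (\ref{U1}) gives $|\sqrt{-1}\partial_B\overline{\partial}_B u_\epsilon|_{\omega^\epsilon} \leq |\rho_{\omega^\epsilon}^T - \theta_\epsilon|_{\omega^\epsilon} + \beta\sqrt{n}$, so Cauchy--Schwarz combined with the $L^2$ bounds of Proposition \ref{UP3} (applied to both $\Delta_B u_\epsilon$ and to $\rho_{\omega^\epsilon}^T-\theta_\epsilon$) closes this piece.

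The main obstacle is the cubic term $\int_t^{t+1}\!\int_M (\Delta_B u_\epsilon)^3 d\mu_\epsilon\, ds$, which is not directly covered by the $L^2$ bounds at hand. To handle it I would integrate by parts using the self-adjointness of $\Delta_B$ with respect to $d\mu_\epsilon$,
\begin{equation*}
\int_M (\Delta_B u_\epsilon)^3 d\mu_\epsilon = -2\int_M (\Delta_B u_\epsilon)\,\langle \nabla^T u_\epsilon, \nabla^T \Delta_B u_\epsilon\rangle_{\omega^\epsilon} d\mu_\epsilon,
\end{equation*}
and then apply Young's inequality to dominate the right-hand side by $\tfrac{1}{2}\int_M |\nabla^T\Delta_B u_\epsilon|_{\omega^\epsilon}^2 d\mu_\epsilon + C\int_M (\Delta_B u_\epsilon)^2 |\nabla^T u_\epsilon|_{\omega^\epsilon}^2 d\mu_\epsilon$. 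The first piece is absorbed into the left-hand side; the second is controlled by a Perelman-type $C^0$ bound on $|\nabla^T u_\epsilon|_{\omega^\epsilon}$ for the twisted Sasaki-Ricci flow (\ref{2023-1}) (the Sasakian analogue of the Perelman--Sesum--Tian estimates used in Tian--Zhang) together with the $L^2$ bound on $\Delta_B u_\epsilon$ just established. The chief technical difficulty is justifying this Perelman-type gradient bound and the integration by parts uniformly in $\epsilon$, where one works on the smooth approximating flow (\ref{2023-1}) and verifies that all constants remain bounded as $\epsilon\to 0^+$; this is precisely where the smoothing $\chi_\epsilon$ and the lower-order structure of $\theta_\epsilon$ in (\ref{2023-4}) play a role.
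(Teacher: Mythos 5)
Your skeleton is the same as the paper's: integrate the identity of Proposition \ref{UP2} over $M\times[t,t+1]$, convert $\int_M\partial_t(\Delta_B u_\epsilon)^2\,d\mu_\epsilon$ into $\frac{d}{dt}\int_M(\Delta_B u_\epsilon)^2\,d\mu_\epsilon$ plus a measure-evolution term, and bound the remaining pieces. However, your treatment of the cross term has a genuine gap. That term is dominated by $\int_M|\rho_{\omega^\epsilon}^{T}-\theta_\epsilon|\,|\sqrt{-1}\partial_B\overline{\partial}_B u_\epsilon|\,|\Delta_B u_\epsilon|\,d\mu_\epsilon$, and after substituting (\ref{U1}) into the factors it is of \emph{cubic} order in $|\rho_{\omega^\epsilon}^{T}-\theta_\epsilon|$. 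A triple product of quantities known only in $L^2$ cannot be closed by Cauchy--Schwarz: your estimate would need either an $L^4$ bound on $\rho_{\omega^\epsilon}^{T}-\theta_\epsilon$ (circular, since that is the content of Theorem \ref{T31}) or a uniform pointwise bound on one factor. The missing ingredient is Proposition \ref{UP1}(2), the Perelman-type estimate $|\mathrm{R}^{\mathrm{T}}(g_\epsilon(t))-tr_{g_\epsilon(t)}\theta_\epsilon|\le C$ uniformly in $\epsilon$ and $t\ge 1$; tracing (\ref{U1}) this gives $|\Delta_B u_\epsilon|\le C$ pointwise, and then the cross term is at most $C\,\|\rho_{\omega^\epsilon}^{T}-\theta_\epsilon\|_{L^2}\,\|\sqrt{-1}\partial_B\overline{\partial}_B u_\epsilon\|_{L^2}\le C$ by Proposition \ref{UP3}. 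This is exactly how the paper closes the argument: it cites Proposition \ref{UP1} together with Proposition \ref{UP3}.

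The same observation dissolves what you call the chief technical difficulty. The Perelman-type gradient bound you want for the cubic term is not something to be re-derived here; it is part (3) of Proposition \ref{UP1}, stated uniformly in $\epsilon$ and $t$ for the approximating flow (\ref{2023-1}). Moreover, once Proposition \ref{UP1}(2) is quoted, the cubic/measure-evolution term needs no integration by parts at all: since $\partial_t d\mu_\epsilon=\Delta_B u_\epsilon\,d\mu_\epsilon$ and $\Delta_B u_\epsilon=n\beta-(\mathrm{R}^{\mathrm{T}}-tr_{\omega^\epsilon}\theta_\epsilon)$ is uniformly bounded, it is controlled by $C\int_M(\Delta_B u_\epsilon)^2\,d\mu_\epsilon\le C$, which is precisely the term $\frac12\int_M(\Delta_B u_\epsilon)^2\,|\mathrm{R}^{\mathrm{T}}-tr_{\omega^\epsilon}\theta_\epsilon-\beta n|\,d\mu_\epsilon$ appearing in the paper's proof. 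So your integration-by-parts/absorption route for the cubic term is workable but superfluous once Proposition \ref{UP1} is invoked; without invoking it, both the cross term and the cubic term remain open as you have written them.
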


\begin{proof}
By Proposition \ref{UP2} and the integration by parts, we see that
\[%
\begin{array}
[c]{cl}
& \int_{M}\left\vert \nabla^{T}\Delta_{B}u_{\epsilon}\left(  t\right)
\right\vert _{\omega^{\epsilon}\left(  t\right)  }^{2}d\mu_{\epsilon}\left(
t\right) \\
\leq & \int_{M}|\langle\rho_{\omega^{\epsilon}}^{T}-\theta_{\epsilon}%
,\sqrt{-1}\partial_{B}\overline{\partial}_{B}u_{\epsilon}\left(  t\right)
\rangle_{\omega^{\epsilon}\left(  t\right)  }|\left\vert \left(  \Delta
_{B}u_{\epsilon}\left(  t\right)  \right)  \right\vert d\mu_{\epsilon}\left(
t\right) \\
& -\frac{1}{2}\int_{M}\frac{\partial}{\partial t}\left(  \Delta_{B}%
u_{\epsilon}\left(  t\right)  \right)  ^{2}d\mu_{\epsilon}\left(  t\right) \\
\leq & \int_{M}|\langle\rho_{\omega^{\epsilon}}^{T}-\theta_{\epsilon}%
,\sqrt{-1}\partial_{B}\overline{\partial}_{B}u_{\epsilon}\left(  t\right)
\rangle_{\omega^{\epsilon}\left(  t\right)  }|\left\vert \left(  \Delta
_{B}u_{\epsilon}\left(  t\right)  \right)  \right\vert d\mu_{\epsilon}\left(
t\right) \\
& -\frac{1}{2}\frac{d}{dt}\int_{M}\left(  \Delta_{B}u_{\epsilon}\left(
t\right)  \right)  ^{2}d\mu_{\epsilon}\left(  t\right)  +\frac{1}{2}\int
_{M}\left(  \Delta_{B}u_{\epsilon}\left(  t\right)  \right)  ^{2}%
|\mathrm{R}_{\omega^{\epsilon}\left(  t\right)  }^{\mathrm{T}}-tr_{\omega
^{\epsilon}\left(  t\right)  }\theta_{\epsilon}-\beta n|d\mu_{\epsilon}\left(
t\right)  .
\end{array}
\]
Then, integrating both sides over the interval $\left[  t,t+1\right]  $, the
estimate%
\[%
\begin{array}
[c]{c}%
\int_{M\times\left[  t,t+1\right]  }\left\vert \nabla^{T}\Delta_{B}%
u_{\epsilon}\left(  t\right)  \right\vert _{\omega^{\epsilon}\left(  t\right)
}^{2}d\mu_{\epsilon}\left(  t\right)  \leq C
\end{array}
\]
holds with the help of Proposition \ref{UP1} and Proposition \ref{UP3}.
\end{proof}

As in \cite{lz}, the straightforward calculations conclude the following inequalities:

\begin{proposition}
\label{UP4}There exists a universal positive constant $C=C(\omega_{0})$ such
that for $t\geq1$ and $\epsilon>0$

\begin{enumerate}
\item
\[%
\begin{array}
[c]{l}%
\int_{M}|\nabla^{T}\overline{\nabla}^{T}u_{\epsilon}|_{\omega^{\epsilon
}\left(  t\right)  }^{4}d\mu_{\epsilon}\left(  t\right) \\
\leq C\int_{M}[|\overline{\nabla}^{T}\nabla^{T}\nabla^{T}u_{\epsilon}%
|_{\omega^{\epsilon}\left(  t\right)  }^{2}+|\nabla^{T}\nabla^{T}%
\overline{\nabla}^{T}u_{\epsilon}|_{\omega^{\epsilon}\left(  t\right)  }%
^{2}]d\mu_{\epsilon}\left(  t\right)  .
\end{array}
\]

\item
\[%
\begin{array}
[c]{l}%
\int_{M}|\nabla^{T}\nabla^{T}u_{\epsilon}|_{\omega^{\epsilon}\left(  t\right)
}^{4}d\mu_{\epsilon}\left(  t\right) \\
\leq C\int_{M}[|\overline{\nabla}^{T}\nabla^{T}\nabla^{T}u_{\epsilon}%
|_{\omega^{\epsilon}\left(  t\right)  }^{2}+|\nabla^{T}\nabla^{T}%
\overline{\nabla}^{T}u_{\epsilon}|_{\omega^{\epsilon}\left(  t\right)  }%
^{2}+|\nabla^{T}\nabla^{T}\nabla^{T}u_{\epsilon}|_{\omega^{\epsilon}\left(
t\right)  }^{2}]d\mu_{\epsilon}\left(  t\right) \\
\leq C\int_{M}[|\nabla^{T}\Delta_{B}u_{\epsilon}|_{\omega^{\epsilon}\left(
t\right)  }^{2}+|\nabla^{T}\nabla^{T}u_{\epsilon}|_{\omega^{\epsilon}\left(
t\right)  }^{2}+|\mathrm{Rm}_{\omega^{\epsilon}\left(  t\right)  }%
^{\mathrm{T}}|_{\omega^{\epsilon}\left(  t\right)  }^{2}]d\mu_{\epsilon
}\left(  t\right)  .
\end{array}
\]

\end{enumerate}
\end{proposition}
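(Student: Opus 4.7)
The plan is to establish both inequalities by transverse integration by parts combined with the commutation identities of the transverse K\"ahler covariant derivatives, adapting to the Sasaki setting the K\"ahler estimates of Liu--Zhang \cite{lz}. In both cases the starting point is to express the $L^{4}$-norm as a product of two $L^{2}$-type densities and then to shift one derivative off one of the factors, using the identity $\int h\cdot\bar{\partial}_{B}k\wedge\omega^{n-1}\wedge\eta=-\int k\cdot\bar{\partial}_{B}h\wedge\omega^{n-1}\wedge\eta$ for basic functions $h,k$ and the transverse K\"ahler form $\omega=\omega^{\epsilon}(t)$, which holds because $\bar{\partial}_{B}\omega=0$ and the integration reduces to the transverse slice.

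For inequality $(1)$, write
\[
\int_{M}|\nabla^{T}\bar{\nabla}^{T}u_{\epsilon}|^{4}_{\omega^{\epsilon}(t)}\,d\mu_{\epsilon}(t)=\int_{M}f\cdot u_{i\bar{j}}u^{\bar{i}j}\,d\mu_{\epsilon}(t),\qquad f=|\nabla^{T}\bar{\nabla}^{T}u_{\epsilon}|^{2}_{\omega^{\epsilon}(t)},
\]
and note that $u_{i\bar{j}}=\partial_{i}\partial_{\bar{j}}u_{\epsilon}$. Integrating by parts in $\partial_{\bar{j}}$, one derivative lands either on $f$, where it produces a term of type $\nabla^{T}\nabla^{T}\bar{\nabla}^{T}u_{\epsilon}$ together with, after one commutation, a term of type $\bar{\nabla}^{T}\nabla^{T}\nabla^{T}u_{\epsilon}$; or on $u^{\bar{i}j}$, where it produces another $\nabla^{T}\nabla^{T}\bar{\nabla}^{T}u_{\epsilon}$ factor. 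The remaining factor $u_{i}$ is uniformly controlled by the $C^{1}$-bound on $u_{\epsilon}$ supplied by Proposition UP1. Applying Cauchy--Schwarz and absorbing $(\int f^{2})^{1/2}=(\int|\nabla^{T}\bar{\nabla}^{T}u_{\epsilon}|^{4})^{1/2}$ on both sides then yields $(1)$.

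For inequality $(2)$, apply the analogous scheme to $\int u_{ij}u^{\bar{i}\bar{j}}\cdot u_{kl}u^{\bar{k}\bar{l}}\,d\mu_{\epsilon}$, integrating by parts in an antiholomorphic direction on one of the $u^{\bar{i}\bar{j}}$ factors. The third derivative $u_{ij,\bar{k}}$ that appears is of type $\nabla^{T}\nabla^{T}\bar{\nabla}^{T}u_{\epsilon}$; to rewrite it in terms of $\nabla^{T}\bar{\nabla}^{T}\nabla^{T}u_{\epsilon}$, commute $\nabla_{\bar{k}}$ past $\nabla_{j}$ via the transverse curvature identity
\[
[\nabla_{\bar{k}},\nabla_{j}]\nabla_{i}u_{\epsilon}=-R^{T}_{\bar{k}ji}{}^{r}\nabla_{r}u_{\epsilon},
\]
which introduces the term $|\mathrm{Rm}^{T}_{\omega^{\epsilon}(t)}|\cdot|\nabla^{T}u_{\epsilon}|$, accounting for the $|\mathrm{Rm}^{T}|^{2}$ summand on the right. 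The first inequality in $(2)$ now follows exactly as in $(1)$. The second inequality in $(2)$ is obtained by using the transverse Bochner identity $\Delta_{B}\nabla^{T}u_{\epsilon}=\nabla^{T}\Delta_{B}u_{\epsilon}+\mathrm{Ric}^{T}(\nabla^{T}u_{\epsilon})$ to replace $|\nabla^{T}\nabla^{T}\bar{\nabla}^{T}u_{\epsilon}|^{2}$ by $|\nabla^{T}\Delta_{B}u_{\epsilon}|^{2}$ modulo a $|\mathrm{Ric}^{T}|\cdot|\nabla^{T}u_{\epsilon}|$ remainder, which is absorbed into the $|\mathrm{Rm}^{T}|^{2}$ term.

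The principal obstacle is not conceptual but bookkeeping: one must keep track of all curvature terms produced by each commutation of $\nabla^{T}$ and $\bar{\nabla}^{T}$ and verify that every such term fits into the three pieces $|\nabla^{T}\Delta_{B}u_{\epsilon}|^{2}$, $|\nabla^{T}\nabla^{T}u_{\epsilon}|^{2}$ and $|\mathrm{Rm}^{T}_{\omega^{\epsilon}(t)}|^{2}$ on the right of $(2)$. Since $u_{\epsilon}$ is basic, the transverse integration by parts produces no contribution from the Reeb direction and reduces to the standard K\"ahler computation on the transverse slice; and because the approximating metrics $\omega^{\epsilon}$ are smooth with $\|u_{\epsilon}\|_{C^{1}}$ uniformly controlled by Proposition UP1, the constant $C$ depends only on $\omega_{0}$, as required.
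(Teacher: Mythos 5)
Your overall strategy---transverse integration by parts, K\"ahler-type commutation identities, the uniform bound $\|u_{\epsilon}(t)\|_{C^{1}(g_{\epsilon}(t))}\leq C$ from Proposition \ref{UP1}, and Cauchy--Schwarz with absorption---is exactly the route the paper intends: the paper gives no proof of Proposition \ref{UP4} at all, deferring to the ``straightforward calculations'' of Liu--Zhang \cite{lz}, and your treatment of item (1) and of the first inequality in item (2) is a correct sketch of that computation. (One small slip: the displayed identity $\int h\,\overline{\partial}_{B}k\wedge\omega^{n-1}\wedge\eta=-\int k\,\overline{\partial}_{B}h\wedge\omega^{n-1}\wedge\eta$ is not even an integral of a form of top degree $2n+1$; what you actually use, and what is valid for basic tensors, is the divergence theorem against $d\mu_{\epsilon}=(\omega^{\epsilon})^{n}\wedge\eta$.)

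The genuine gap is in your justification of the last inequality of item (2). The commutation $\Delta_{B}\nabla^{T}u_{\epsilon}=\nabla^{T}\Delta_{B}u_{\epsilon}+\mathrm{Ric}^{\mathrm{T}}(\nabla^{T}u_{\epsilon})$ only controls the traced block $g^{k\bar{l}}\nabla_{k}\nabla_{\bar{l}}\nabla_{i}u_{\epsilon}$; it does not ``replace'' the full norm $|\nabla^{T}\nabla^{T}\overline{\nabla}^{T}u_{\epsilon}|^{2}$, and it says nothing about the pure block $|\nabla^{T}\nabla^{T}\nabla^{T}u_{\epsilon}|^{2}$, which also sits on the left of that inequality and cannot be converted into a mixed third derivative by commutations (commutators reorder indices, they do not change type). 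The actual argument is an integrated Weitzenb\"ock computation: pair each third derivative against its conjugate, integrate by parts once, and commute the contracted pair of derivatives to produce $\nabla^{T}\Delta_{B}u_{\epsilon}$; the remainders are then not merely $|\mathrm{Ric}^{\mathrm{T}}||\nabla^{T}u_{\epsilon}|$, but include terms of the schematic form $\mathrm{Rm}^{\mathrm{T}}\ast\nabla^{T}\nabla^{T}u_{\epsilon}\ast\nabla^{T}\overline{\nabla}^{T}u_{\epsilon}$ and, before a further integration by parts, $\nabla\mathrm{Rm}^{\mathrm{T}}\ast\nabla^{T}u_{\epsilon}\ast\nabla^{T}\nabla^{T}u_{\epsilon}$. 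The quadratic-in-Hessian curvature terms do \emph{not} fit directly into the three pieces on the right: one must estimate $\int|\mathrm{Rm}^{\mathrm{T}}||\nabla^{T}\nabla^{T}u_{\epsilon}|^{2}\leq\delta\int|\nabla^{T}\nabla^{T}u_{\epsilon}|^{4}+C_{\delta}\int|\mathrm{Rm}^{\mathrm{T}}|^{2}$, feed the $\delta$-term back through item (1) and the first inequality of item (2), and absorb it into the third-derivative integral on the left with $\delta$ chosen small relative to those constants; likewise the $\nabla\mathrm{Rm}^{\mathrm{T}}$ terms are removed by one more integration by parts using $\|\nabla^{T}u_{\epsilon}\|_{C^{0}}\leq C$. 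In other words, the three inequalities must be run together, and the $(3,0)$ block needs its own integration-by-parts/commutation step producing $\nabla^{T}\Delta_{B}u_{\epsilon}$ plus such curvature remainders; as written, your final step asserts the inequality rather than establishing it.
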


\begin{theorem}
\label{T31} There exists a positive constant $C$ which is independent of
$\epsilon$ and $t$ such that%
\[%
\begin{array}
[c]{c}%
\int_{M}\left\vert \rho_{\omega^{\epsilon}}^{T}-\theta_{\epsilon}\right\vert
_{\omega^{\epsilon}\left(  t\right)  }^{4}d\mu_{\epsilon}\left(  t\right)
\leq C
\end{array}
\]

for any $\epsilon>0$ and $t\geq1$.
\end{theorem}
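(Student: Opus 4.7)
The plan is to reduce the desired $L^{4}$ bound to an $L^{4}$ bound on the transverse complex Hessian of $u_{\epsilon}$, and then further to $L^{2}$ bounds on the transverse third derivatives supplied by the Bochner formalism. By the defining equation \eqref{U1},
\[
\rho_{\omega^{\epsilon}}^{T}-\theta_{\epsilon}=\beta\omega^{\epsilon}-\sqrt{-1}\partial_{B}\overline{\partial}_{B}u_{\epsilon}(t),
\]
so pointwise in $\omega^{\epsilon}(t)$,
\[
|\rho_{\omega^{\epsilon}}^{T}-\theta_{\epsilon}|^{4}_{\omega^{\epsilon}(t)}\leq C\bigl(1+|\nabla^{T}\overline{\nabla}^{T}u_{\epsilon}|^{4}_{\omega^{\epsilon}(t)}\bigr).
\]
Integrating against $d\mu_{\epsilon}(t)$ (whose total mass $V$ is fixed), the theorem reduces to a uniform bound on $\int_{M}|\nabla^{T}\overline{\nabla}^{T}u_{\epsilon}|^{4}_{\omega^{\epsilon}(t)}\,d\mu_{\epsilon}(t)$.

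By Proposition \ref{UP4}(1), this $L^{4}$ quantity is dominated by the $L^{2}$ norms of the mixed third derivatives $\overline{\nabla}^{T}\nabla^{T}\nabla^{T}u_{\epsilon}$ and $\nabla^{T}\nabla^{T}\overline{\nabla}^{T}u_{\epsilon}$. A transverse Bochner/commutator computation reorganizes these as $|\nabla^{T}\Delta_{B}u_{\epsilon}|^{2}$ plus curvature-coupled remainders bounded via Cauchy--Schwarz by $|\mathrm{Rm}_{\omega^{\epsilon}(t)}^{\mathrm{T}}|^{2}$ and $|\nabla^{T}\overline{\nabla}^{T}u_{\epsilon}|^{2}$, which Proposition \ref{UP3} already controls uniformly in $L^{2}$. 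The core task becomes, therefore, a uniform \emph{pointwise-in-time} bound
\[
G_{\epsilon}(t):=\int_{M}|\nabla^{T}\Delta_{B}u_{\epsilon}|^{2}_{\omega^{\epsilon}(t)}\,d\mu_{\epsilon}(t)\leq C \qquad \text{for every } t\geq 1.
\]

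Lemma \ref{L32} supplies only the time-integrated version $\int_{t}^{t+1}G_{\epsilon}(\tau)\,d\tau\leq C$. To upgrade this to a bound at each fixed time, I will differentiate the identity of Proposition \ref{UP2} once more along the flow: combining $\square_{B}u_{\epsilon}=\beta u_{\epsilon}-a_{\epsilon}(t)$ with the evolution of the transverse metric and integrating by parts gives a differential inequality of the schematic form
\[
G_{\epsilon}'(t)\leq A_{1}G_{\epsilon}(t)+A_{2},
\]
where $A_{1},A_{2}$ are uniform in $\epsilon$ thanks to Proposition \ref{UP3}. Combined with Lemma \ref{L32}, a mean-value/Gronwall argument on unit intervals forces $G_{\epsilon}(t)\leq C$ at every $t\geq 1$. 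Feeding this back through the Bochner reduction and Proposition \ref{UP4}(1) then yields the theorem.

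The main obstacle is securing the differential inequality for $G_{\epsilon}$ with constants that remain uniform as $\epsilon\to 0$: one must integrate by parts against $\rho_{\omega^{\epsilon}}^{T}-\theta_{\epsilon}$ and its derivatives while absorbing every cross-term using only the $L^{2}$-level estimates of Proposition \ref{UP3}, since stronger estimates are not yet available at this stage. The conic regularization $\theta_{\epsilon}$ defined by \eqref{2023-4} has fixed basic cohomology class but concentrates near $D^{T}$ as $\epsilon\to 0$, so every constant must ultimately be expressible in terms of $\int_{M}|\theta_{\epsilon}|^{2}_{\omega^{\epsilon}(t)}\,d\mu_{\epsilon}(t)$, whose boundedness is exactly the content of the proof of Proposition \ref{UP3}. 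If the direct differential inequality proves too delicate, the fallback is to bound $\int_{M}(\Delta_{B}u_{\epsilon})^{4}\,d\mu_{\epsilon}(t)$ pointwise-in-time by Moser iteration applied to Proposition \ref{UP2}, and then invoke the elementary pointwise comparison $(\Delta_{B}u_{\epsilon})^{4}\leq n^{2}|\nabla^{T}\overline{\nabla}^{T}u_{\epsilon}|^{4}$.
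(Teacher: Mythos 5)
Your route is the paper's route: reduce via (\ref{U1}) to a uniform $L^{4}$ bound on $\nabla^{T}\overline{\nabla}^{T}u_{\epsilon}$, dominate that through Proposition \ref{UP4} by $\int_{M}\bigl(|\nabla^{T}\Delta_{B}u_{\epsilon}|^{2}+|\nabla^{T}\nabla^{T}u_{\epsilon}|^{2}+|\mathrm{Rm}^{\mathrm{T}}|^{2}\bigr)d\mu_{\epsilon}$ with the last two terms handled by Proposition \ref{UP3}, and then upgrade the time-integrated bound of Lemma \ref{L32} on $G_{\epsilon}(t)=\int_{M}|\nabla^{T}\Delta_{B}u_{\epsilon}|^{2}d\mu_{\epsilon}$ to a bound at each $t\geq1$ by a differential inequality $G_{\epsilon}'\leq CG_{\epsilon}+C$ plus a mean-value argument. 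This is precisely the paper's inequality (\ref{U2}) obtained by differentiating $|\nabla^{T}\Delta_{B}u_{\epsilon}|^{2}$ along the flow, so the skeleton of your proposal and the paper's proof coincide.

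The one step you flag as the ``main obstacle'' is, however, where your proposed mechanism would not close, and it is exactly where the paper inserts a different idea. In the computation of $\frac{\partial}{\partial t}|\nabla^{T}\Delta_{B}u_{\epsilon}|^{2}$ the dangerous contribution is $-\int_{M}(\theta_{\epsilon})_{j\overline{k}}w_{\overline{j}}w_{k}\,d\mu_{\epsilon}$ with $w=\Delta_{B}u_{\epsilon}$. The paper does not absorb this via $\int_{M}|\theta_{\epsilon}|_{\omega^{\epsilon}}^{2}d\mu_{\epsilon}$; it uses the sign: since $\sqrt{-1}\partial_{B}\overline{\partial}_{B}\log(\left\Vert S\right\Vert_{h}^{2}+\epsilon^{2})\geq0$, the form $\theta_{\epsilon}$ in (\ref{2023-4}) is nonnegative, so this term is $\leq0$ and is simply discarded, which is what makes the constants in (\ref{U2}) uniform in $\epsilon$. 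Your suggestion that every constant be expressed through $\int_{M}|\theta_{\epsilon}|^{2}d\mu_{\epsilon}$ would, via Cauchy--Schwarz, bring in $\bigl(\int_{M}|\nabla^{T}\Delta_{B}u_{\epsilon}|^{4}d\mu_{\epsilon}\bigr)^{1/2}$, a quantity not controlled at this stage, so as written that absorption does not work; the positivity observation is the missing ingredient. Your fallback is also logically backwards: controlling $\int_{M}(\Delta_{B}u_{\epsilon})^{4}d\mu_{\epsilon}$ only bounds the trace, and the pointwise inequality $(\Delta_{B}u_{\epsilon})^{4}\leq n^{2}|\nabla^{T}\overline{\nabla}^{T}u_{\epsilon}|^{4}$ goes in the wrong direction --- it yields no bound on $|\nabla^{T}\overline{\nabla}^{T}u_{\epsilon}|^{4}$, hence none on $|\rho_{\omega^{\epsilon}}^{T}-\theta_{\epsilon}|^{4}$.
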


\begin{proof}
By the equation (\ref{U1}), we only need to justify the uniform boundedness of
$\int_{M}|\sqrt{-1}\partial_{B}\overline{\partial}_{B}u_{\epsilon}\left(
t\right)  |_{\omega^{\epsilon}\left(  t\right)  }^{4}d\mu_{\epsilon}\left(
t\right)  $. From the equality
\[%
\begin{array}
[c]{c}%
\square_{B}\left(  \Delta_{B}u_{\epsilon}\right)  =\langle\rho_{\omega
^{\epsilon}}^{T}-\theta_{\epsilon},\sqrt{-1}\partial_{B}\overline{\partial
}_{B}u_{\epsilon}\left(  t\right)  \rangle_{\omega^{\epsilon}\left(  t\right)
}=\beta\Delta_{B}u_{\epsilon}-|\nabla^{T}\overline{\nabla}^{T}u_{\epsilon
}|_{\omega^{\epsilon}\left(  t\right)  }^{2},
\end{array}
\]

we have
\[%
\begin{array}
[c]{cl}
& \frac{\partial}{\partial t}\left\vert \nabla^{T}\Delta_{B}u_{\epsilon
}\left(  t\right)  \right\vert _{\omega^{\epsilon}\left(  t\right)  }^{2}\\
= & -u_{\epsilon}^{j\overline{k}}\left(  t\right)  \nabla_{j}^{T}\Delta
_{B}u_{\epsilon}\nabla_{\overline{k}}^{T}\Delta_{B}u_{\epsilon}+\nabla_{j}%
^{T}\Delta_{B}\overset{\centerdot}{u}_{\epsilon}\nabla_{\overline{j}}%
^{T}\Delta_{B}u_{\epsilon}+\nabla_{j}^{T}\Delta_{B}u_{\epsilon}\nabla
_{\overline{j}}^{T}\Delta_{B}\overset{\centerdot}{u}_{\epsilon}\\
= & -u_{\epsilon}^{j\overline{k}}\left(  t\right)  \nabla_{j}^{T}\Delta
_{B}u_{\epsilon}\nabla_{\overline{k}}^{T}\Delta_{B}u_{\epsilon}+\nabla_{j}%
^{T}(\Delta_{B}^{2}u_{\epsilon}+\beta\Delta_{B}u_{\epsilon})\nabla
_{\overline{j}}^{T}\Delta_{B}u_{\epsilon}\\
& +\nabla_{j}^{T}\Delta_{B}u_{\epsilon}\nabla_{\overline{j}}^{T}(\Delta
_{B}^{2}u_{\epsilon}+\beta\Delta_{B}u_{\epsilon}-|\nabla^{T}\overline{\nabla
}^{T}u_{\epsilon}|_{\omega^{\epsilon}\left(  t\right)  }^{2}-\nabla_{j}%
^{T}|\nabla^{T}\overline{\nabla}^{T}u_{\epsilon}|_{\omega^{\epsilon}\left(
t\right)  }^{2})\\
= & -u_{\epsilon}^{j\overline{k}}\left(  t\right)  \nabla_{j}^{T}\Delta
_{B}u_{\epsilon}\nabla_{\overline{k}}^{T}\Delta_{B}u_{\epsilon}-(\nabla
_{j}^{T}|\nabla^{T}\overline{\nabla}^{T}u_{\epsilon}|_{\omega^{\epsilon
}\left(  t\right)  }^{2})\nabla_{\overline{j}}^{T}\Delta_{B}u_{\epsilon}\\
& -\nabla_{j}^{T}\Delta_{B}u_{\epsilon}(\nabla_{\overline{j}}^{T}|\nabla
^{T}\overline{\nabla}^{T}u_{\epsilon}|_{\omega^{\epsilon}\left(  t\right)
}^{2})+w_{j}w_{\overline{j}i\overline{i}}\\
& +w_{\overline{j}}\{w_{ji\overline{i}}-[\left(  \theta_{\epsilon}\right)
_{j\overline{k}}+\beta\left(  g_{\epsilon}\right)  _{j\overline{k}}-\left(
u_{\epsilon}\right)  _{j\overline{k}}]w_{k}\}+2\beta|\nabla_{j}^{T}\Delta
_{B}u_{\epsilon}|_{\omega^{\epsilon}\left(  t\right)  }^{2}\\
= & -(\nabla_{j}^{T}|\nabla^{T}\overline{\nabla}^{T}u_{\epsilon}%
|_{\omega^{\epsilon}\left(  t\right)  }^{2})\nabla_{\overline{j}}^{T}%
\Delta_{B}u_{\epsilon}-\nabla_{j}^{T}\Delta_{B}u_{\epsilon}(\nabla
_{\overline{j}}^{T}|\nabla^{T}\overline{\nabla}^{T}u_{\epsilon}|_{\omega
^{\epsilon}\left(  t\right)  }^{2})\\
& +w_{\overline{j}}w_{ji\overline{i}}+w_{j}w_{\overline{j}i\overline{i}%
}-\left(  \theta_{\epsilon}\right)  _{j\overline{k}}w_{\overline{j}}%
w_{k}+\beta\left\vert \nabla^{T}\Delta_{B}u_{\epsilon}\right\vert
_{\omega^{\epsilon}\left(  t\right)  }^{2}\\
= & -(\nabla_{j}^{T}|\nabla^{T}\overline{\nabla}^{T}u_{\epsilon}%
|_{\omega^{\epsilon}\left(  t\right)  }^{2})\nabla_{\overline{j}}^{T}%
\Delta_{B}u_{\epsilon}-\nabla_{j}^{T}\Delta_{B}u_{\epsilon}(\nabla
_{\overline{j}}^{T}|\nabla^{T}\overline{\nabla}^{T}u_{\epsilon}|_{\omega
^{\epsilon}\left(  t\right)  }^{2})\\
& +\Delta_{B}\left\vert \nabla^{T}\Delta_{B}u_{\epsilon}\left(  t\right)
\right\vert _{\omega^{\epsilon}\left(  t\right)  }^{2}-|\nabla^{T}\nabla
^{T}\Delta_{B}u_{\epsilon}|_{\omega^{\epsilon}\left(  t\right)  }^{2}%
-|\nabla^{T}\overline{\nabla}^{T}\Delta_{B}u_{\epsilon}|_{\omega^{\epsilon
}\left(  t\right)  }^{2}\\
& -\left(  \theta_{\epsilon}\right)  _{j\overline{k}}w_{\overline{j}}%
w_{k}+\beta\left\vert \nabla^{T}\Delta_{B}u_{\epsilon}\right\vert
_{\omega^{\epsilon}\left(  t\right)  }^{2}%
\end{array}
\]
for $w=\Delta_{B}u_{\epsilon}$; then, by considering the quantity $\frac
{d}{dt}\int_{M}\left\vert \nabla^{T}\Delta_{B}u_{\epsilon}\left(  t\right)
\right\vert _{\omega^{\epsilon}\left(  t\right)  }^{2}d\mu_{\epsilon}\left(
t\right)  $, it could be found that
\begin{equation}%
\begin{array}
[c]{cl}
& \frac{d}{dt}\int_{M}\left\vert \nabla^{T}\Delta_{B}u_{\epsilon}\left(
t\right)  \right\vert _{\omega^{\epsilon}\left(  t\right)  }^{2}d\mu
_{\epsilon}\left(  t\right) \\
\leq & -\frac{1}{2}\int_{M}[|\nabla^{T}\overline{\nabla}^{T}\Delta
_{B}u_{\epsilon}|_{\omega^{\epsilon}\left(  t\right)  }^{2}+|\nabla^{T}%
\nabla^{T}\Delta_{B}u_{\epsilon}|_{\omega^{\epsilon}\left(  t\right)  }%
^{2}]d\mu_{\epsilon}\left(  t\right) \\
& -\int_{M}\left(  \theta_{\epsilon}\right)  _{j\overline{k}}w_{\overline{j}%
}w_{k}d\mu_{\epsilon}\left(  t\right)  +C\int_{M}\left\vert \nabla^{T}%
\Delta_{B}u_{\epsilon}\right\vert _{\omega^{\epsilon}\left(  t\right)  }%
^{2}d\mu_{\epsilon}\left(  t\right)  +C\\
\leq & C\int_{M}\left\vert \nabla^{T}\Delta_{B}u_{\epsilon}\right\vert
_{\omega^{\epsilon}\left(  t\right)  }^{2}d\mu_{\epsilon}\left(  t\right)  +C
\end{array}
\label{U2}%
\end{equation}
from the fact that $\sqrt{-1}\partial_{B}\overline{\partial}_{B}%
\log(\left\Vert S\right\Vert _{h}^{2}+\epsilon^{2})\geq0$ and Proposition
\ref{UP4}.

Finally, for any $\epsilon>0$ and $t\geq1$, the uniform boundedness of
\[%
\begin{array}
[c]{c}%
\int_{M}\left\vert \rho_{\omega^{\epsilon}}^{T}-\theta_{\epsilon}\right\vert
_{\omega^{\epsilon}\left(  t\right)  }^{4}d\mu_{\epsilon}\left(  t\right)
\leq C
\end{array}
\]
follows from (\ref{U2}), Lemma \ref{L32} and Proposition \ref{UP4}.
\end{proof}

\begin{remark}
It is worth to note that it is may not true for the boundedness of
$\theta_{\epsilon}$ in general; however, we are able to control its integral bound!
\end{remark}

\subsection{Longtime Behavior under the Twisted Sasaki-Ricci Flow}

All transverse quantities on Sasakian manifolds such as the transverse
Perelman's $\mathcal{W}$-functional can be viewed as their K\"{a}hler
counterparts restricted on basic forms and transverse K\"{a}hler structure.
Hence, under the twisted or conic Sasaki-Ricci flow, the Reeb vector field and
the transverse holomorphic structure are both invariant, and the metrics are
bundle-like. One can apply integration by parts, the expressions involved
behave essentially the same as in the K\"{a}hler case.

In this section, the proofs are given here, primarily along the lines of the
arguments in \cite{cchlw} for the conic Sasaki-Ricci flow.

Firstly, we observe a long-time solution to the conic Sasaki-Ricci flow from
the twisted Sasaki-Ricci flow (\ref{2023-1A}) as following:

\begin{proposition}
(\cite[Theorem 4.1 and Proposition 4.2]{lz}) There exists a sequence
$\{\varepsilon_{i}\}$ satisfying $\varepsilon_{i}\rightarrow0$ such that the
solution $\varphi_{\varepsilon_{i}}(t)$ of the twisted Sasaki-Ricci flow
(\ref{2023-1}) converges to the unique solution $\varphi(t)$ of the conic
Sasaki-Ricci flow (\ref{2023}) in $C_{B}^{\infty}$-topology on $[0,\infty
)\times(M\backslash D^{T})$. That is, $\omega\left(  t\right)  =\widehat
{\omega}_{0}+\sqrt{-1}\partial_{B}\overline{\partial}_{B}\varphi\left(
t\right)  $ is a long-time solution to the conic Sasaki-Ricci flow. Moreover,
for any $t\in\lbrack0,\infty),$ there exists a positive constant $\alpha<1$
such that the basic function $\varphi(t)$ is H\H{o}lder continuous with
respect to $\omega_{0}$ on $M$. And $||\varphi(t)||_{C_{B}^{\alpha}}$ is
bounded by a constant $C$ which depends only on $\beta,$ $\omega_{0}$,
$||\varphi(t)||_{C_{B}^{0}(M\backslash D^{T})}$ and $||\overset{\centerdot
}{\varphi}(t)||_{C_{B}^{0}(M\backslash D^{T})}$.
\end{proposition}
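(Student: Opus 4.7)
The plan is to adapt the K\"ahler argument of Liu--Zhang to the basic/transverse setting. Since the Reeb field and the transverse holomorphic structure are preserved along the twisted flow (\ref{2023-1}) and the evolving metrics remain bundle-like, every step below reduces to its transverse counterpart on the local uniformizing charts of the leaf space, and the integration-by-parts identities behind Proposition \ref{UP2}--Proposition \ref{UP4} carry over verbatim.

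First I would establish uniform estimates away from $D^{T}$. Using the parabolic Monge--Amp\`ere equation (\ref{2023-1A}), I would derive a uniform $C^{0}$ bound on $\varphi_{\varepsilon}$ and on $\overset{\centerdot}{\varphi}_{\varepsilon}$ independent of $\varepsilon$; the maximum-principle argument applies because $F_{\varepsilon}$ involves only the mollified defining-section term $(\left\Vert S\right\Vert _{h}^{2}+\varepsilon^{2})^{1-\beta}$. The transverse $L^{4}$-estimate on $\rho^{T}_{\omega^{\varepsilon}}-\theta_{\varepsilon}$ from Theorem \ref{T31}, together with Proposition \ref{UP3}, then gives $L^{2}$-bounds on the transverse curvature and its covariant derivatives. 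A transverse Moser iteration on any foliation chart contained in $M\setminus D^{T}$ promotes these integral bounds to locally uniform $C_{B}^{k}$ estimates on $\varphi_{\varepsilon}(t)$ for every $k$, uniformly on compact subsets of $[0,\infty)\times(M\setminus D^{T})$.

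A diagonal subsequence $\varepsilon_i\to 0$ then produces convergence $\varphi_{\varepsilon_i}(t)\to\varphi(t)$ in $C_{B}^{\infty}$ on compact subsets of $M\setminus D^{T}$. Passing to the limit in (\ref{2023-1A}) and using that $\theta_{\varepsilon}\to(1-\beta)[D^{T}]$ as currents by (\ref{2023-4}), the limit $\omega(t)=\widehat{\omega}_{0}+\sqrt{-1}\partial_{B}\overline{\partial}_{B}\varphi(t)$ solves the conic Sasaki--Ricci flow (\ref{2023}) classically on $M\setminus D^{T}$ and weakly across $D^{T}$. Uniqueness among bundle-like transverse potentials with the prescribed initial datum then follows from the maximum principle applied to $\varphi-\widetilde{\varphi}$ on $M\setminus D^{T}$, combined with the a priori $C^{0}$ control up to $D^{T}$.

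The hard part will be upgrading these smooth interior estimates to a \emph{global} $C_{B}^{\alpha}$ bound on all of $M$. For this I would adopt the Jeffres--Mazzeo--Rubinstein weighted Schauder framework in the transverse direction, replacing the K\"ahler model by the transverse conic model
\begin{equation*}
\omega_{\beta}^{T}=\sqrt{-1}\,\frac{dz_{1}\wedge d\overline{z}_{1}}{|z_{1}|^{2(1-\beta)}}+\sqrt{-1}\,dz_{2}\wedge d\overline{z}_{2}
\end{equation*}
in a foliation chart $\{x,z_{1},z_{2}\}$ around a point of $D^{T}$. A weighted transverse De Giorgi--Nash--Moser iteration, localized along the leaves by cut-offs adapted to the Reeb field, should then yield $\|\varphi(t)\|_{C_{B}^{\alpha}}\le C(\beta,\omega_{0},\|\varphi(t)\|_{C_{B}^{0}(M\setminus D^{T})},\|\overset{\centerdot}{\varphi}(t)\|_{C_{B}^{0}(M\setminus D^{T})})$ for some $\alpha\in(0,1)$. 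The principal obstacle is that $\theta_{\varepsilon}$ is not uniformly bounded in $L^{\infty}$; the role of Theorem \ref{T31} is precisely to supply the $L^{4}$ integrability one needs to feed the weighted parabolic Calabi-type inequality and thus to close this H\"older bootstrap on all of $M$.
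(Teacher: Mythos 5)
The paper does not actually prove this proposition: it is imported wholesale from Liu--Zhang \cite{lz} (Theorem 4.1 and Proposition 4.2 there), with the transfer to the Sasakian setting justified only by the remark that all quantities are basic and the computations are transverse K\"ahler ones. Your proposal attempts a genuine proof, which is welcome, but its central analytic step does not work. You derive locally uniform $C_{B}^{k}$ estimates for $\varphi_{\varepsilon}$ away from $D^{T}$ from the integral curvature bounds of Proposition \ref{UP3} and Theorem \ref{T31} ``via a transverse Moser iteration''. Integral ($L^{2}$ or $L^{4}$) bounds on the transverse curvature do not yield local smooth estimates on the potentials: Moser iteration produces sup bounds for subsolutions of scalar equations, not $C^{k}$ control of an evolving metric, and in any case Theorem \ref{T31} is only valid for $t\geq1$, so it cannot be the source of estimates on all of $[0,\infty)\times(M\setminus D^{T})$, including near $t=0$. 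In the scheme of \cite{lz} the curvature integral bounds enter only later, for the structure of the limit as $t\rightarrow\infty$; the existence/convergence statement being proved here is purely Monge--Amp\`ere-theoretic. The correct route is: uniform (locally in time) $C^{0}$ bounds on $\varphi_{\varepsilon}$ and $\overset{\centerdot}{\varphi}_{\varepsilon}$ from the maximum principle applied to (\ref{2023-1A}); local second-order (Chern--Lu/Aubin--Yau type) estimates away from $D^{T}$ obtained with Tsuji's barrier $\delta\log\Vert S\Vert_{h}^{2}$, exploiting $\sqrt{-1}\partial_{B}\overline{\partial}_{B}\log(\Vert S\Vert_{h}^{2}+\varepsilon^{2})\geq-C\omega_{0}$; then local Calabi third-order or Evans--Krylov estimates and parabolic bootstrapping, all uniform in $\varepsilon$ on compact subsets of $M\setminus D^{T}$. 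Your diagonal extraction and passage to the limit are fine once these estimates are in place.

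The second gap is the global H\"older bound. The asserted dependence of $\Vert\varphi(t)\Vert_{C_{B}^{\alpha}}$ only on $\beta$, $\omega_{0}$, $\Vert\varphi(t)\Vert_{C_{B}^{0}(M\backslash D^{T})}$ and $\Vert\overset{\centerdot}{\varphi}(t)\Vert_{C_{B}^{0}(M\backslash D^{T})}$ is obtained in \cite{lz} from Kolodziej-type $L^{p}$ stability for the complex Monge--Amp\`ere operator: along the flow the transverse volume density is $e^{\overset{\centerdot}{\varphi}-\beta(\cdot)-F}$ times $\Vert S\Vert^{-2(1-\beta)}$, which lies in $L^{p}$ for some $p>1$ uniformly, and this alone gives the H\"older continuity with respect to $\omega_{0}$ on all of $M$; no curvature input is used. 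Consequently your final sentence, attributing the closing of the H\"older bootstrap to the $L^{4}$ bound of Theorem \ref{T31}, is incorrect (besides the $t\geq1$ restriction already noted), and the Jeffres--Mazzeo--Rubinstein weighted Schauder machinery is neither needed for this statement nor available before the conic structure has been produced. Finally, the uniqueness argument should be run with a barrier (for instance comparing $\varphi-\widetilde{\varphi}\pm\epsilon\log\Vert S\Vert_{h}^{2}$ and letting $\epsilon\rightarrow0$), since $M\setminus D^{T}$ is not compact and the maximum may a priori be attained on $D^{T}$ where the solutions are only H\"older.
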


Next we will show that the transverse twisted Ricci potential $u_{\epsilon
}\left(  t\right)  $ which is a basic function behaves very well as
$t\rightarrow\infty$ under the twisted Sasaki-Ricci flow. This implies that
the limit of twisted Sasaki-Ricci flow should be the conic Sasaki-Ricci
soliton in the $L^{2}$-topology as in the next subsection. In fact, it follows
from the estimate of previous subsection and the twisted transverse Perelman's
$\mathcal{W}_{\theta_{\epsilon}}^{T}$-functional below that

\begin{proposition}
(\cite[Theorem 7 and Theorem 8]{cchlw}) Under the twisted Sasaki-Ricci flow

\begin{enumerate}
\item
\begin{equation}%
\begin{array}
[c]{c}%
\int_{M}|\nabla^{T}\nabla^{T}u_{\varepsilon}|^{2}(\omega^{\epsilon})^{n}%
\wedge\eta_{0}\rightarrow0
\end{array}
\label{a2}%
\end{equation}
as $t\rightarrow\infty.$

\item
\begin{equation}%
\begin{array}
[c]{c}%
\int_{M}(\Delta_{B}u_{\varepsilon}-|\nabla^{T}u_{\varepsilon}|^{2}%
+u_{\varepsilon}-a_{\varepsilon})^{2}(\omega^{\epsilon})^{n}\wedge\eta
_{0}\rightarrow0
\end{array}
\label{b2}%
\end{equation}
as $t\rightarrow\infty.$ Here $a_{\varepsilon}\left(  t\right)  =\frac{\beta
}{V}\int_{M}u_{\epsilon}\left(  t\right)  \exp\left(  -u_{\epsilon}\left(
t\right)  \right)  d\mu_{\epsilon}\left(  t\right)  $ as in (\ref{2023-6}).
\end{enumerate}
\end{proposition}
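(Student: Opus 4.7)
The plan is to introduce a twisted transverse Perelman $\mathcal{W}^T_{\theta_\epsilon}$-functional --- the Sasakian analogue of Perelman's shrinker entropy, adapted to the twisted setting and restricted to basic functions --- and exploit its monotonicity along the twisted Sasaki-Ricci flow (\ref{2023-1}). Because the Reeb vector field is Killing, the transverse holomorphic structure is preserved, and the metrics remain bundle-like, the classical K\"ahler derivation transplants onto the transverse side on basic functions, so one formally mimics Perelman's argument. The normalization $\frac{1}{V}\int_M e^{-u_\epsilon}\,d\mu_\epsilon = 1$ is precisely what is needed to keep $\mathcal{W}^T_{\theta_\epsilon}$ well-defined along the flow.

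The key computation is the monotonicity formula: along the flow, the derivative of $\mathcal{W}^T_{\theta_\epsilon}$ should be, up to a twisted remainder controlled by $\theta_\epsilon$, bounded below by
\begin{equation*}
2\int_M \left[ |\nabla^T\nabla^T u_\epsilon|^2 + \bigl|R^T_{i\bar j} + (u_\epsilon)_{i\bar j} - \beta g^T_{i\bar j}\bigr|^2 \right] e^{-u_\epsilon}\, d\mu_\epsilon,
\end{equation*}
in which the $(2,0)$-Hessian measures the failure of $\nabla^{T,1,0} u_\epsilon$ to be holomorphic and the $(1,1)$-term measures the soliton defect. A uniform-in-$(t,\epsilon)$ upper bound for $\mathcal{W}^T_{\theta_\epsilon}$ follows from the H\"older estimate on $\varphi_\epsilon(t)$ recalled in the previous proposition, the normalization above, and the $L^4$-bound of Theorem \ref{T31}, which absorbs the twisted remainders via Cauchy--Schwarz. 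Integrating the monotonicity over $[0,\infty)$ then yields
\begin{equation*}
\int_0^\infty \int_M |\nabla^T\nabla^T u_\epsilon|^2\, d\mu_\epsilon\, dt < \infty.
\end{equation*}

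To upgrade this time-integral finiteness to the pointwise-in-$t$ limit (\ref{a2}), I would show that $t\mapsto\int_M |\nabla^T\nabla^T u_\epsilon|^2\, d\mu_\epsilon$ has uniformly bounded time derivative along the flow, using Theorem \ref{T31} together with Proposition \ref{UP4} to dominate the resulting curvature quartics; any non-negative $L^1([0,\infty))$ function with bounded derivative must tend to zero. For (\ref{b2}), observe that $F_\epsilon := \Delta_B u_\epsilon - |\nabla^T u_\epsilon|^2 + u_\epsilon - a_\epsilon$ is precisely the quantity whose vanishing characterizes the twisted soliton (with $a_\epsilon$ given by (\ref{2023-6})); a direct Bochner-type calculation combined with integration by parts then expresses $\int_M F_\epsilon^{\,2}\, d\mu_\epsilon$ in terms of $\int_M |\nabla^T\nabla^T u_\epsilon|^2\, d\mu_\epsilon$ and the $(1,1)$-soliton defect, modulo twisted corrections controlled again by Theorem \ref{T31}. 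Hence (\ref{b2}) follows from the first conclusion.

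The main obstacle is that $\theta_\epsilon$ becomes singular along $D^T$ as $\epsilon\to 0$, so every cross-term appearing in the monotonicity formula and in the Bochner identity for $F_\epsilon$ must be handled via the integral $L^4$-estimate of Theorem \ref{T31} rather than through any pointwise bound. Tracking the constants uniformly in $\epsilon$ through all integrations by parts --- in particular ensuring that no step tacitly requires $\sup_M |\theta_\epsilon|$ or $\sup_M |\mathrm{Rm}^T_{\omega^\epsilon}|$ --- is the technical heart of the argument and is where the preceding subsection's $L^4$-bound does all the work.
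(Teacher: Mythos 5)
Your overall strategy---monotonicity of a twisted transverse Perelman entropy plus the estimates of the preceding subsection---is indeed the mechanism the paper points to (it does not reprove the statement here, but imports it from \cite[Theorems 7 and 8]{cchlw} with exactly that indication). However, there is a genuine gap at the heart of your argument: you write the monotonicity formula with the twisted Ricci potential $u_{\epsilon}$ itself as the weight, claiming $\frac{d}{dt}\mathcal{W}^{T}_{\theta_{\epsilon}}\gtrsim\int_{M}\left(|\nabla^{T}\nabla^{T}u_{\epsilon}|^{2}+|\mathrm{Ric}^{\mathrm{T}}-\theta_{\epsilon}+\overline{\nabla}^{T}\nabla^{T}u_{\epsilon}-\beta g^{T}|^{2}\right)e^{-u_{\epsilon}}d\mu_{\epsilon}$. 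Perelman-type monotonicity gives no such inequality for this choice: it holds either for weights solving the conjugate heat equation or, as in Proposition \ref{P2024}, for $\lambda_{\varepsilon}^{T}(g^{T})$, whose derivative is expressed through the \emph{minimizer} $f$ of $\mathcal{W}^{T}_{\theta_{\epsilon}}(g^{T},\cdot)$. The Ricci potential is not the minimizer, and $e^{-u_{\epsilon}}d\mu_{\epsilon}$ does not satisfy the conjugate heat equation (one has $\dot{u}_{\epsilon}-\Delta_{B}u_{\epsilon}=\beta u_{\epsilon}-a_{\varepsilon}$), so the first variation of $\mathcal{W}^{T}_{\theta_{\epsilon}}$ picks up an extra term that destroys the sum-of-squares structure. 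Consequently, what your time-integration actually controls are the quantities in (\ref{52}) for the minimizers $f_{t}$, not the integrals in (\ref{a2})--(\ref{b2}), which concern $u_{\epsilon}$. Transferring the decay from $f_{t}$ to $u_{\epsilon}(t)$ is precisely the nontrivial content of the cited theorems; in the K\"ahler model it is handled either by coupling the entropy with backward conjugate-heat solutions on unit time intervals (Tian--Zhu \cite{tzhu1}) or under a Mabuchi-energy lower bound (\cite{psswe}, \cite{l}, \cite{lz}), and your proposal does not address this step at all.

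Two secondary points. First, your upgrade from time-integrability to convergence assumes that $t\mapsto\int_{M}|\nabla^{T}\nabla^{T}u_{\epsilon}|^{2}d\mu_{\epsilon}$ has a uniformly bounded time derivative; with only the $L^{4}$ curvature bound of Theorem \ref{T31} and the space-time $L^{2}$ bound of Lemma \ref{L32} this is more than is available---the paper's own computation for $\int_{M}|\nabla^{T}\Delta_{B}u_{\epsilon}|^{2}$ yields a Gronwall-type inequality $\frac{d}{dt}Y\leq CY+C$, and one concludes from the $L^{1}$-in-time information, not from Barbalat with a bounded derivative. (Also, the uniform upper bound on the entropy does not require the H\"older estimate on $\varphi_{\epsilon}$ or Theorem \ref{T31}; it follows from testing $\mathcal{W}^{T}_{\theta_{\epsilon}}$ with a fixed normalized basic function.) Second, your reduction of (\ref{b2}) to (\ref{a2}) via a Bochner-type identity for $\Delta_{B}u_{\epsilon}-|\nabla^{T}u_{\epsilon}|^{2}+u_{\epsilon}-a_{\varepsilon}$ (its $\overline{\partial}_{B}$-differential is a weighted divergence of the $(2,0)$-Hessian, up to twisted terms, so a weighted Poincar\'e inequality applies) is the right idea and matches the standard route, but it is conditional on first establishing (\ref{a2}) for $u_{\epsilon}$, which, as explained above, your argument has not done.
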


Next we define the twisted transverse Perelman's $\mathcal{W}_{\theta
_{\epsilon}}^{T}$-functional on a compact Sasakian $(2n+1)$-manifold by
\begin{equation}%
\begin{array}
[c]{l}%
\mathcal{W}_{\theta_{\epsilon}}^{T}(g^{T},f,\tau)\\
=(4\pi\tau)^{-n}\int_{M}(\tau(\mathrm{R}^{\mathrm{T}}(g(t))-tr_{g(t)}%
\theta_{\epsilon}+|\nabla^{T}f|^{2})+\beta f-2n)e^{-f}(\omega^{\epsilon}%
)^{n}\wedge\eta_{0},
\end{array}
\label{2022-2}%
\end{equation}
for $f\in C_{B}^{\infty}(M;\mathbb{R})$, $\tau>0.$ Also denote
\[%
\begin{array}
[c]{c}%
\lambda_{\varepsilon}^{T}(g^{T},\tau)=\inf\{\mathcal{W}_{\theta_{\epsilon}%
}^{T}(g^{T},f,\tau):f\in C_{B}^{\infty}(M;\mathbb{R});\text{ }\int_{M}%
(4\pi\tau)^{-n}e^{-f}(\omega^{\epsilon})^{n}\wedge\eta_{0}=V\}.
\end{array}
\]
Then we have
\[
-\infty<\lambda_{\varepsilon}^{T}(g^{T},\tau)\leq C
\]
and there exists $f_{\tau}\in C_{B}^{\infty}(M;\mathbb{R})$ so that
\[
\mathcal{W}_{\theta_{\epsilon}}^{T}(g^{T},f_{\tau},\tau)=\lambda_{\varepsilon
}^{T}(g^{T},\tau).
\]

Note that $\mathcal{W}_{\theta}^{T}$-functional can be expressed as
\[%
\begin{array}
[c]{ccl}%
\mathcal{W}_{\theta_{\epsilon}}^{T}(g^{T},f) & = & \mathcal{W}^{T}%
(g^{T},f,\frac{1}{2})+(2\pi)^{-n}(2n)V\\
& = & (2\pi)^{-n}\int_{M}(\frac{1}{2}(\mathrm{R}^{\mathrm{T}}-tr_{g(t)}%
\theta_{\epsilon}+|\nabla^{T}f|^{2})+\beta f)e^{-f}(\omega^{\epsilon}%
)^{n}\wedge\eta_{0},
\end{array}
\]
where $(g^{T},f)$ satisfies $\int_{M}e^{-f}(\omega^{\epsilon})^{n}\wedge
\eta_{0}=V$ and $\tau=\frac{1}{2}.$ Again
\[%
\begin{array}
[c]{c}%
\lambda_{\varepsilon}^{T}(g^{T})=\inf\{\mathcal{W}_{\theta_{\epsilon}}%
^{T}(g^{T},f):f_{\tau}\in C_{B}^{\infty}(M;\mathbb{R})\text{ \textrm{with}
}(2\pi)^{-n}\int_{M}e^{-f}(\omega^{\epsilon})^{n}\wedge\eta_{0}=V\}.
\end{array}
\]

\begin{proposition}
\label{P2024}(\cite{l}) Under the twisted Sasaki-Ricci flow, we have

\begin{enumerate}
\item
\begin{equation}%
\begin{array}
[c]{ccl}%
\frac{d}{dt}\lambda_{\varepsilon}^{T}(g^{T}) & = & \int_{M}|\mathrm{Ric}%
^{\mathrm{T}}-\theta_{\epsilon}+\overline{\nabla}^{T}\nabla^{T}f-\beta
g^{T}|^{2}e^{-f}\omega^{n}\wedge\eta_{0}\\
&  & +\frac{1}{2}\int_{M}\theta_{\epsilon}(\nabla^{T}f,\overline{\nabla}%
^{T}f)e^{-f}\omega^{n}\wedge\eta_{0}+\int_{M}|\nabla^{T}\nabla^{T}f|^{2}%
e^{-f}\omega^{n}\wedge\eta_{0}.
\end{array}
\label{52}%
\end{equation}
Here $f$ are minimizering solutions of $\lambda_{\varepsilon}^{T}(g^{T})$
associated to metrics $g^{T}$ and $\sigma^{T}$ is the family of transverse
diffeomorphisms of $M$ generated by the time-dependent vector field
$\nabla_{g^{T}}^{T}f.$

\item $g^{T}$ is a critical point of\ $\lambda_{\varepsilon}^{T}(g^{T})$ if
and only if $g^{T}$ is a gradient shrinking twisted Sasaki-Ricci soliton%
\begin{equation}%
\begin{array}
[c]{c}%
\mathrm{Ric}^{\mathrm{T}}-\theta_{\epsilon}+\nabla^{T}\overline{\nabla}%
^{T}f=\beta g^{T}\text{ \textrm{and} }\nabla^{T}\nabla^{T}f=0.
\end{array}
\label{53}%
\end{equation}
Here $f$ is a minimizer of $\mathcal{W}^{T}(g^{T},\cdot).$
\end{enumerate}
\end{proposition}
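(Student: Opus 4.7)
The plan is to imitate Perelman's classical derivation of the monotonicity of his shrinking $\mathcal{W}$-entropy along Ricci flow, together with its K\"ahler refinement, in the present twisted Sasakian context. As the discussion preceding Proposition \ref{P2024} emphasises, the twisted Sasaki-Ricci flow preserves the Reeb field, the transverse holomorphic structure, and the property of a form being basic; consequently integration by parts against $(\omega^{\epsilon})^{n}\wedge\eta_{0}$, the transverse Bochner identity, and the K\"ahler splitting of the Hessian all transfer verbatim from the K\"ahler case on the local leaf quotients.

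First I would fix $t$, pick a minimiser $f=f(t)$ realising $\lambda_{\varepsilon}^{T}(g^{T}(t))$; such a minimiser exists, and since the functional and the constraint depend only on basic data, $f$ can be chosen basic. By the usual envelope argument it suffices to compute $\frac{d}{dt}\mathcal{W}_{\theta_{\epsilon}}^{T}(g^{T}(t),f(t))$ with $f$ evolved so as to preserve the constraint $(2\pi)^{-n}\int e^{-f}(\omega^{\epsilon})^{n}\wedge\eta_{0}=V$; concretely this is the transverse conjugate heat equation
\[
\partial_{t}f = -\Delta_{B}f + |\nabla^{T}f|^{2} - \mathrm{R}^{T} + \mathrm{tr}_{\omega}\theta_{\epsilon} + \beta n,
\]
coupled to $\partial_{t}g^{T}_{i\bar{j}} = -R^{T}_{i\bar{j}} + (\theta_{\epsilon})_{i\bar{j}} + \beta g^{T}_{i\bar{j}}$. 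Differentiating term by term and integrating by parts several times, the Perelman-type computation produces the full square
\[
|\mathrm{Ric}^{T} - \theta_{\epsilon} + \overline{\nabla}^{T}\nabla^{T}f - \beta g^{T}|^{2};
\]
the K\"ahler refinement then uses the decomposition of the covariant Hessian of $f$ into its $(1,1)$ and $(2,0)+(0,2)$ components, and since $\mathrm{Ric}^{T}$, $\theta_{\epsilon}$ and $g^{T}$ are all of type $(1,1)$, the $(2,0)+(0,2)$ part of $\nabla^{2}f$ has no partner to cancel and produces precisely the extra integrand $|\nabla^{T}\nabla^{T}f|^{2}$. Finally, because $\theta_{\epsilon}$ is a fixed time-independent background $(1,1)$-form, the chain rule applied to $\mathrm{tr}_{\omega}\theta_{\epsilon}$ contributes, after an integration by parts against $e^{-f}$, the twist correction $\tfrac{1}{2}\theta_{\epsilon}(\nabla^{T}f,\overline{\nabla}^{T}f)e^{-f}$ appearing in (\ref{52}).

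For the critical-point characterisation (\ref{53}), I would use that all three integrands in (\ref{52}) are pointwise non-negative (note that $\theta_{\epsilon}\geq(1-\beta)\omega_{0}>0$); hence $\frac{d}{dt}\lambda_{\varepsilon}^{T}=0$ forces each of them to vanish separately, yielding the soliton equation $\mathrm{Ric}^{T} - \theta_{\epsilon} + \nabla^{T}\overline{\nabla}^{T}f = \beta g^{T}$ together with the transverse holomorphicity $\nabla^{T}\nabla^{T}f = 0$, and the converse is immediate. The main obstacle is the bookkeeping around $\theta_{\epsilon}$: since it neither evolves with $g^{T}$ nor scales homogeneously, $\theta_{\epsilon}$ produces extra cross-terms at nearly every step of the variation, and care is required so that these terms repackage cleanly into the first squared integrand and the auxiliary $\tfrac{1}{2}\theta_{\epsilon}(\nabla^{T}f,\overline{\nabla}^{T}f)$ correction rather than polluting the $|\nabla^{T}\nabla^{T}f|^{2}$ piece. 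Once that bookkeeping is in place, the argument is a direct transverse adaptation of the K\"ahler-Perelman computation.
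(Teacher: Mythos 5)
Your proposal follows what is essentially the paper's own route: the paper gives no independent proof of Proposition \ref{P2024}, relying on the citation to Liu's twisted K\"ahler computation together with the stated principle that all quantities are basic so the Perelman-type variation transfers verbatim to the transverse setting, and that is exactly the computation you sketch (conjugate heat equation for $f$ preserving the constraint, the full square from the $(1,1)$ part of the Hessian, the extra $|\nabla^{T}\nabla^{T}f|^{2}$ from the $(2,0)$ part, and the $\tfrac{1}{2}\theta_{\epsilon}(\nabla^{T}f,\overline{\nabla}^{T}f)$ correction coming from the fixed twist form). The only loose point is your one-line converse in part (2): since $\theta_{\epsilon}>0$, criticality actually forces $\nabla^{T}f=0$, whereas (\ref{53}) alone leaves the middle term of (\ref{52}) possibly positive, so the equivalence really requires that extra vanishing --- an imprecision already present in the cited statement rather than a defect of your argument.
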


Now based on Perelman's non-collapsing theorem, we have

\begin{proposition}
\label{UP1} (\cite{lz}, \cite{co1}, \cite{chlw}) Under the twisted
Sasaki-Ricci flow, for any $t\geq1,$ $\varepsilon>0$ and
\[%
\begin{array}
[c]{c}%
-\mathrm{Ric}^{\mathrm{T}}(\omega_{\varepsilon}(t))+\beta\omega_{\varepsilon
}(t)+\theta_{\varepsilon}=\sqrt{-1}\partial_{B}\overline{\partial}%
_{B}u_{_{\varepsilon}}(t),
\end{array}
\]
we have

\begin{enumerate}
\item
\[
\mathrm{Vol}_{g_{\varepsilon}(t)}(B_{\xi,g_{\varepsilon}(t)}(x,r))\geq
Cr^{2n},
\]

\item
\[
|(\mathrm{R}^{T}(g_{\varepsilon}(t))-tr_{g_{\varepsilon}(t)}\theta
_{\varepsilon})|\leq C,
\]

\item
\[
||u_{\varepsilon}(t)||_{C^{1}(g_{\varepsilon}(t))}\leq C\ \ \mathrm{and}%
\ \ \mathrm{diam}(M,g_{\varepsilon}(t))\leq C.
\]

\end{enumerate}
\end{proposition}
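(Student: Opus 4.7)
The plan is to adapt Perelman's three foundational estimates for the K\"{a}hler--Ricci flow to the twisted transverse setting. The starting identity
\[
-\mathrm{Ric}^{T}(\omega_{\varepsilon}(t))+\beta\omega_{\varepsilon}(t)+\theta_{\varepsilon}=\sqrt{-1}\partial_{B}\overline{\partial}_{B}u_{\varepsilon}(t),
\]
whose trace gives $\mathrm{R}^{T}(g_{\varepsilon}(t))-tr_{g_{\varepsilon}(t)}\theta_{\varepsilon}=\beta n-\Delta_{B}u_{\varepsilon}(t)$, converts every claim into a statement about the twisted Ricci potential $u_{\varepsilon}(t)$ and the transverse geometry. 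Three features of the Sasakian setting will be used throughout: the metric is bundle-like $g_{\varepsilon}=g_{\varepsilon}^{T}\oplus\eta_{\varepsilon}\otimes\eta_{\varepsilon}$ with the Reeb field of unit length, $u_{\varepsilon}(t)$ is basic so that all energy computations reduce to the transverse K\"{a}hler structure, and $\theta_{\varepsilon}\geq 0$ as a $(1,1)$-form by construction in (\ref{2023-4}).

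\textbf{Step 1 (non-collapsing).} By Proposition \ref{P2024}, $\lambda_{\varepsilon}^{T}(g_{\varepsilon}^{T}(t))$ is non-decreasing along the flow because the right-hand side of (\ref{52}) is a sum of non-negative terms; the middle integral $\tfrac{1}{2}\int_{M}\theta_{\varepsilon}(\nabla^{T}f,\overline{\nabla}^{T}f)e^{-f}\omega^{n}\wedge\eta_{0}$ is non-negative thanks to $\theta_{\varepsilon}\geq 0$. The initial value $\lambda_{\varepsilon}^{T}(g_{\varepsilon}^{T}(0))$ is uniformly bounded in $\varepsilon$ using the explicit form of $\widehat{\omega}_{0}^{\varepsilon}$, so monotonicity produces a uniform transverse logarithmic Sobolev inequality on $(M,g_{\varepsilon}(t))$. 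I would then follow the Sasaki version of Perelman's argument in Collins \cite{co1} and \cite{chlw}: a Reeb-tube $B_{\xi,g_{\varepsilon}(t)}(x,r)$ has volume comparable to the length of the (uniformly bounded) orbit times a transverse ball on the leaf space, and the logarithmic Sobolev-to-non-collapsing mechanism produces the lower bound $\mathrm{Vol}_{g_{\varepsilon}(t)}(B_{\xi,g_{\varepsilon}(t)}(x,r))\geq Cr^{2n}$.

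\textbf{Step 2 (bound on $\mathrm{R}^{T}-tr_{g_{\varepsilon}(t)}\theta_{\varepsilon}$).} This is equivalent to a uniform two-sided $C^{0}$ bound on $\Delta_{B}u_{\varepsilon}(t)$. Starting from $\square_{B}u_{\varepsilon}=\beta u_{\varepsilon}-a_{\varepsilon}(t)$ and $\square_{B}(\Delta_{B}u_{\varepsilon})=\langle\rho_{\omega_{\varepsilon}}^{T}-\theta_{\varepsilon},\sqrt{-1}\partial_{B}\overline{\partial}_{B}u_{\varepsilon}\rangle_{\omega_{\varepsilon}(t)}$ as already derived in the proof of Proposition \ref{UP2}, I would apply the maximum principle to the transverse Perelman composite $P_{\varepsilon}=\Delta_{B}u_{\varepsilon}+|\nabla^{T}u_{\varepsilon}|^{2}+u_{\varepsilon}-a_{\varepsilon}(t)$ and its lower counterpart. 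Every twist-generated term produced by differentiation enters the resulting differential inequality with the favorable sign because $\theta_{\varepsilon}\geq 0$, and the uniform control of $a_{\varepsilon}(t)$ from the normalization closes the estimate.

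\textbf{Step 3 ($C^{1}$ and diameter bounds).} With $\mathrm{R}^{T}-tr_{g_{\varepsilon}(t)}\theta_{\varepsilon}$ bounded, the uniform Sobolev inequality produced by Step 1 combines with a De Giorgi/Moser iteration on the PDE satisfied by $|\nabla^{T}u_{\varepsilon}|^{2}$ to upgrade the gradient bound to an $L^{\infty}$ bound; since $u_{\varepsilon}$ is basic one has $\xi\cdot u_{\varepsilon}=0$, so $\|u_{\varepsilon}\|_{C^{1}(g_{\varepsilon}(t))}\leq C$. The diameter bound then follows from Perelman's covering argument applied to the transverse distance on $M/\mathcal{F}_{\xi}$: non-collapsed transverse balls together with bounded transverse scalar curvature and a $C^{1}$-controlled Ricci potential force the transverse diameter to be uniformly bounded, and once the orbit length (uniformly bounded because $\eta_{\varepsilon}(\xi)=1$) is added, one obtains $\mathrm{diam}(M,g_{\varepsilon}(t))\leq C$. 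The main obstacle throughout is $\varepsilon$-uniformity: $\theta_{\varepsilon}$ is \emph{not} $L^{\infty}$-bounded as $\varepsilon\to 0$, but every argument above uses only the sign $\theta_{\varepsilon}\geq 0$ or integral bounds of the type produced in Proposition \ref{UP3} and Theorem \ref{T31}, both of which are uniform in $\varepsilon$; placing $tr_{g_{\varepsilon}(t)}\theta_{\varepsilon}$ on the favorable side of each maximum-principle inequality is precisely the role for which the defining formula (\ref{2023-4}) was designed.
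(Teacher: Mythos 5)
The paper does not actually prove Proposition \ref{UP1}; it quotes the Perelman-type estimates from \cite{lz}, \cite{co1} and \cite{chlw}, so the benchmark is the argument given in those references. Your Step 1 is consistent with that argument: monotonicity of $\lambda_{\varepsilon}^{T}$ along the twisted flow (using $\theta_{\varepsilon}\geq 0$ in (\ref{52})) together with an $\varepsilon$-uniform bound on the initial entropy of $\widehat{\omega}_{0}^{\epsilon}$ yields a uniform entropy lower bound and hence transverse non-collapsing, which is exactly how \cite{co1} and \cite{lz} proceed.

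The genuine gap is in Steps 2 and 3, where the logical order of Perelman's argument is inverted and the individual claims do not close. First, the "uniform control of $a_{\varepsilon}(t)$ from the normalization" is not available a priori: $a_{\varepsilon}(t)=\frac{\beta}{V}\int_{M}u_{\epsilon}e^{-u_{\epsilon}}d\mu_{\epsilon}$ is trivially bounded above (since $xe^{-x}\leq e^{-1}$) but a lower bound requires a lower bound on $u_{\epsilon}$, i.e. part of conclusion (3), so invoking it to bound $\Delta_{B}u_{\epsilon}$ is circular. Second, a maximum principle applied to $P_{\varepsilon}=\Delta_{B}u_{\epsilon}+|\nabla^{T}u_{\epsilon}|^{2}+u_{\epsilon}-a_{\varepsilon}$ does not give a two-sided $C^{0}$ bound on $\Delta_{B}u_{\epsilon}$: the evolution of $\Delta_{B}u_{\epsilon}+|\nabla^{T}u_{\epsilon}|^{2}$ produces curvature terms without a favorable sign, and the actual proof instead bounds the ratios $|\nabla^{T}u_{\epsilon}|^{2}/(u_{\epsilon}+2A)$ and $-\Delta_{B}u_{\epsilon}/(u_{\epsilon}+2A)$, after first securing $u_{\epsilon}+2A>0$ from the lower bound on $\mathrm{R}^{\mathrm{T}}-tr_{g_{\varepsilon}(t)}\theta_{\varepsilon}$ (maximum principle on its evolution) and the equation $(\partial_{t}-\Delta_{B})u_{\epsilon}=\beta u_{\epsilon}-a_{\varepsilon}(t)$. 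Third, the diameter bound cannot be deduced from "non-collapsing $+$ bounded transverse scalar curvature $+$ $C^{1}$-controlled potential": besides being circular (the $C^{1}$ bound is conclusion (3) and in the genuine argument comes after the diameter bound), bounded scalar curvature and non-collapsing alone never bound the diameter. Perelman's diameter step is a contradiction argument on dyadic annuli of small volume, in which one inserts cut-off test functions into the twisted transverse $\mathcal{W}$-functional and uses the entropy lower bound from Step 1 together with the relative estimates above (which give at most quadratic growth of $u_{\epsilon}$ in the transverse distance) to control the scalar-curvature contribution; only after the diameter bound does one obtain the uniform $C^{0}$ bound of $u_{\epsilon}$, and then conclusions (2) and (3). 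Without the relative estimates and the entropy/annuli step, your chain of implications does not go through.
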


As a consequence,

\begin{corollary}
\label{C31}(\cite{lz}, \cite{co2}) Under the twisted Sasaki-Ricci flow, we
have
\begin{equation}%
\begin{array}
[c]{l}%
\left(  \int_{M}f^{\frac{2(2n+1)}{2n-1}}d\mu_{\varepsilon}\right)
^{\frac{2n-1}{2n+1}}\leq A\int_{M}||\nabla^{T}f||_{g_{\varepsilon}(t)}^{2}%
d\mu_{\varepsilon}+B\int_{M}f^{2}d\mu_{\varepsilon}.
\end{array}
\label{1}%
\end{equation}

\end{corollary}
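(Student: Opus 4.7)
The plan is to extract a Sobolev inequality with constants uniform in $t \geq 1$ and $\varepsilon > 0$ from the three uniform geometric inputs provided by Proposition~\ref{UP1}: the non-collapsing bound $\mathrm{Vol}_{g_\varepsilon(t)}(B(x,r)) \geq C r^{2n}$, the bound on the twisted transverse scalar curvature $|\mathrm{R}^{\mathrm{T}}(g_\varepsilon(t)) - \mathrm{tr}_{g_\varepsilon(t)} \theta_\varepsilon| \leq C$, and the uniform diameter bound $\mathrm{diam}(M, g_\varepsilon(t)) \leq C$. Since the twisted Sasaki-Ricci flow keeps the transverse K\"ahler structure and the Reeb foliation fixed, only basic $f$ enter the analysis, and for basic $f$ one has $|\nabla f|_{g_\varepsilon(t)} = |\nabla^T f|_{g_\varepsilon(t)}$, so that a Riemannian Sobolev inequality on $(M, g_\varepsilon(t))$ is exactly what is needed.

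The mechanism is Perelman's $\mathcal{W}$-functional in its twisted transverse form $\mathcal{W}_{\theta_\varepsilon}^T$ introduced in the previous subsection. By Proposition \ref{P2024} together with the scalar-curvature and diameter bounds of Proposition \ref{UP1}, the infimum $\lambda_\varepsilon^T(g_\varepsilon(t))$ remains uniformly bounded below along the flow. Combining this uniform lower bound with the volume non-collapsing, one obtains, by the standard argument of R.~Ye and Q.~Zhang in the K\"ahler-Ricci setting (adapted to the Sasakian setting in \cite{lz} and \cite{co2}), a uniform logarithmic Sobolev inequality for basic functions on $(M, g_\varepsilon(t))$. Davies' heat-kernel trick then converts this log-Sobolev inequality into an $L^2$-Sobolev inequality at the critical exponent $\tfrac{2(2n+1)}{2n-1}$ attached to the real dimension $\dim M = 2n+1$, with the constants $A,B$ depending only on the three geometric bounds listed above.

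The main technical point is to verify that the twisting form $\theta_\varepsilon$, which degenerates along $D^T$ as $\varepsilon \to 0$, does not destroy the uniformity of $A$ and $B$. This is precisely where the integral control on $\theta_\varepsilon$---in particular $\int_M |\theta_\varepsilon|^2_{g_\varepsilon(t)}\, d\mu_\varepsilon \leq C$, already observed in the proof of Proposition \ref{UP3}---replaces pointwise curvature bounds in Perelman's argument. Once the logarithmic Sobolev constants are uniform in both $t$ and $\varepsilon$, the Davies conversion produces the claimed Sobolev inequality without further input, yielding the corollary.
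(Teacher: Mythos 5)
Your proposal follows essentially the same route as the paper, which proves nothing locally and simply invokes \cite{lz} and \cite{co2}: the monotone twisted transverse entropy $\lambda_{\varepsilon}^{T}$ of Proposition \ref{P2024}, bounded below uniformly in $t$ and $\varepsilon$ thanks to $\theta_{\epsilon}\geq 0$ and the uniform Perelman-type estimates of Proposition \ref{UP1}, yields a uniform logarithmic Sobolev inequality for basic functions, and the Ye--Q.~Zhang/Davies conversion then gives (\ref{1}) with constants independent of $t$ and $\varepsilon$. The one inaccuracy is attributing the $\varepsilon$-uniformity to the integral bound $\int_{M}|\theta_{\epsilon}|_{\omega^{\epsilon}(t)}^{2}\,d\mu_{\epsilon}\leq C$ from Proposition \ref{UP3}; what the argument actually uses is the nonnegativity of $\theta_{\epsilon}$, the uniform bound $|\mathrm{R}^{\mathrm{T}}-tr_{g_{\varepsilon}(t)}\theta_{\varepsilon}|\leq C$, and the $\varepsilon$-uniform lower bound of the entropy at the initial time, but this misplaced emphasis does not affect the validity of the proof.
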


Combining these estimates of Proposition \ref{P41} and Corollary \ref{C31}, it
follows that

\begin{corollary}
There exist positive constants $A,$ $B,$ $C$ such that, for any $\varepsilon$
and $t,$

\begin{enumerate}
\item
\[%
\begin{array}
[c]{c}%
||\overset{\centerdot}{\phi}_{\epsilon}(t)||_{C_{B}^{0}}\leq C.
\end{array}
\]

\item
\begin{equation}%
\begin{array}
[c]{c}%
osc(\phi_{\varepsilon}(t))\leq\frac{A}{V}\int_{M}\phi_{\varepsilon}(t)d\mu
_{0}+B.
\end{array}
\label{4}%
\end{equation}

\end{enumerate}
\end{corollary}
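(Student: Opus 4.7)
The plan is to deduce both estimates as relatively direct consequences of the uniform bounds on the transverse twisted Ricci potential from Proposition \ref{UP1}, the uniform Sobolev inequality of Corollary \ref{C31}, and the parabolic Monge--Amp\`ere equation (\ref{2023-1A}), following the template of Liu--Zhang \cite{lz} in the K\"ahler setting and transferring the argument to basic functions on the Sasakian manifold.

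For part (1), my starting point is the identity
\[
\dot{\varphi}_{\varepsilon}(t) = u_{\varepsilon}(t) + c_{\varepsilon}(t),
\]
which follows from comparing $\partial_{t}\omega^{\varepsilon} = \sqrt{-1}\,\partial_{B}\overline{\partial}_{B}\dot{\varphi}_{\varepsilon}$ with the defining relation (\ref{U1}) of $u_\varepsilon$: basic functions whose transverse mixed derivatives vanish are constant along the characteristic foliation, hence purely time-dependent. Proposition \ref{UP1}(3) gives $\|u_\varepsilon(t)\|_{C_B^0}\leq C$ uniformly in $\varepsilon$ and $t$, so it remains to bound $c_\varepsilon(t)$. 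Averaging the identity against $d\mu_\varepsilon(t)$, the normalization $\frac{1}{V}\int_M e^{-u_\varepsilon}\,d\mu_\varepsilon = 1$ together with the $C^{0}$-bound on $u_\varepsilon$ controls $\int_M u_\varepsilon\,d\mu_\varepsilon$, while $\int_M \dot\varphi_\varepsilon\,d\mu_\varepsilon$ is controlled via (\ref{2023-1A}) and the uniform bounds on $F_\varepsilon$ and $\chi$. This yields $|c_\varepsilon(t)|\leq C$, and hence $\|\dot\varphi_\varepsilon(t)\|_{C_B^0}\leq C$.

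For part (2), I would run Moser iteration on the basic potential. Positivity of $\omega^\varepsilon = \widehat{\omega}_0 + \sqrt{-1}\,\partial_B\overline{\partial}_B\varphi_\varepsilon$ gives $\Delta_{B,0}\varphi_\varepsilon \geq -n$ with respect to the background transverse metric $g_0^T$. Testing this differential inequality against $(\varphi_\varepsilon - \inf_M \varphi_\varepsilon)^{2p-1}$ and $(\sup_M \varphi_\varepsilon - \varphi_\varepsilon)^{2p-1}$, integrating by parts against $d\mu_0$, and applying the uniform Sobolev inequality (\ref{1}) set up the standard $L^p \to L^{p(2n+1)/(2n-1)}$ iteration. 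Iterating from $p=1$ produces $L^\infty$-bounds which, after absorbing $\sup_M \varphi_\varepsilon$ and $\inf_M \varphi_\varepsilon$ into the $L^1$-average by a standard Harnack-type rearrangement, yield the oscillation estimate
\[
\mathrm{osc}(\varphi_\varepsilon(t)) \leq \frac{A}{V}\int_M \varphi_\varepsilon(t)\,d\mu_0 + B.
\]

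The main obstacle is the bookkeeping of the time-dependent additive constant $c_\varepsilon(t)$ in part (1): one must verify it does not drift with $t$, and this uses essentially all of the normalized structure of the flow, namely the shrinking factor $\beta > 0$, the $C^{0}$-bound on $u_\varepsilon$, and the uniform bounds on $F_\varepsilon$ and $\chi$. The Moser iteration in part (2) is by now routine in the K\"ahler case; it transfers to the transverse basic-function setting without essential modification, since the Sobolev inequality (\ref{1}) is stated in terms of the transverse gradient and integration by parts against the basic measure $d\mu_0$ behaves identically for basic functions on a Sasakian manifold.
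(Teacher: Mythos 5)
Your part (1) starts from the correct identity $\dot{\varphi}_{\varepsilon}(t)=u_{\varepsilon}(t)+c_{\varepsilon}(t)$ (this is exactly how the paper sets things up in the proof of Proposition \ref{UP2}), but the mechanism you propose for bounding $c_{\varepsilon}(t)$ does not close. Averaging the flow equation (\ref{2023-1A}) against $d\mu_{\varepsilon}(t)$ produces the terms $\frac{\beta}{V}\int_{M}\varphi_{\varepsilon}\,d\mu_{\varepsilon}$ and $\frac{1}{V}\int_{M}\log\frac{\omega_{\varphi_{\varepsilon}}^{n}\wedge\eta_{0}}{\omega_{\varepsilon}^{n}\wedge\eta_{0}}\,d\mu_{\varepsilon}$, and neither has an a priori uniform bound at this stage: a bound on $\int_{M}\varphi_{\varepsilon}$ is precisely what is only obtained later (Proposition \ref{P42}, under properness). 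Indeed no bound on $\|\dot{\varphi}_{\varepsilon}\|_{C_{B}^{0}}$ can follow from the equation plus bounds on $F_{\varepsilon}$ and $\chi$ alone, because $\varphi_{\varepsilon}+Ce^{\beta t}$ again solves (\ref{2023-1A}) with a shifted initial constant while its time derivative drifts like $\beta Ce^{\beta t}$. The statement is therefore tied to the specific normalization $\varphi_{\varepsilon}(0)=c_{\varepsilon}(0)$ written in (\ref{2023-1A}): differentiating the Monge--Amp\`ere equation gives $\ddot{\varphi}_{\varepsilon}=\Delta_{B}\dot{\varphi}_{\varepsilon}+\beta\dot{\varphi}_{\varepsilon}$, while $\square_{B}u_{\varepsilon}=\beta u_{\varepsilon}-a_{\varepsilon}(t)$, so the spatial constant satisfies the ODE $c_{\varepsilon}'(t)=\beta c_{\varepsilon}(t)+a_{\varepsilon}(t)$ with $a_{\varepsilon}$ as in (\ref{2023-6}) bounded by Proposition \ref{UP1}; only with the Phong--Song--Sturm--Weinkove/Liu--Zhang choice of $c_{\varepsilon}(0)$ does the forward solution of this ODE stay bounded, and that is the argument the paper is invoking (it gives no written proof beyond combining Proposition \ref{UP1} with Corollary \ref{C31} in the manner of \cite{lz}). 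Your proposal never uses the explicit value of $c_{\varepsilon}(0)$, so the key step is missing.

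In part (2) the iteration with respect to the background metric, using only the transverse positivity $\Delta_{B}\phi_{\varepsilon}\geq -n$ (Laplacian of $\omega_{0}$), can only give the one-sided bound $\sup_{M}\phi_{\varepsilon}\leq\frac{1}{V}\int_{M}\phi_{\varepsilon}\,d\mu_{0}+C$: testing with $(\phi_{\varepsilon}-\inf\phi_{\varepsilon})^{2p-1}$ leaves an uncontrolled multiple of $-\inf\phi_{\varepsilon}$, and testing with $(\sup\phi_{\varepsilon}-\phi_{\varepsilon})^{2p-1}$ gives a supersolution (weak Harnack) inequality that is vacuous for the infimum on a closed manifold since $\inf_{M}(\sup\phi_{\varepsilon}-\phi_{\varepsilon})=0$. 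No rearrangement can repair this, because $\omega_{0}$-plurisubharmonic basic potentials with fixed $d\mu_{0}$-average can have arbitrarily negative infimum; the oscillation bound (\ref{4}) is not a consequence of positivity plus fixed-metric elliptic theory. The lower bound on $\inf\phi_{\varepsilon}$ must come from the flow: the Sobolev inequality (\ref{1}) is stated for the evolving metric $g_{\varepsilon}(t)$ and measure $d\mu_{\varepsilon}$ (not for $g_{0}^{T}$ and $d\mu_{0}$ as you use it), and its role, together with Proposition \ref{UP1}, is to give uniform Moser/Green's-function control for $\omega^{\epsilon}(t)$, so that $\Delta_{B}\phi_{\varepsilon}=n-\mathrm{tr}_{\omega^{\epsilon}(t)}\widehat{\omega}_{0}^{\epsilon}\leq n$ yields $\inf_{M}\phi_{\varepsilon}\geq\frac{1}{V}\int_{M}\phi_{\varepsilon}\,d\mu_{\varepsilon}-C$; the remaining term $\frac{1}{V}\int_{M}\phi_{\varepsilon}\,d\mu_{\varepsilon}$ is then handled through the Monge--Amp\`ere equation combined with part (1), the uniform lower bound on $\sup_{M}\phi_{\varepsilon}$, and an exponential integrability ($\alpha$-invariant/H\"ormander-type) estimate, as in \cite{lz}. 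As written, your argument bypasses exactly the two ingredients (Proposition \ref{UP1} and Corollary \ref{C31} for the evolving metric) that the paper combines, and the step producing the lower bound on the infimum would fail.
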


By using (\ref{4}), (\ref{5}) and (\ref{6}) below, it follows that

\begin{proposition}
\label{P42}Suppose that the twisted transverse Mabuchi $K$-energy $K_{\eta
_{0},\theta_{\varepsilon}}$ are uniformly proper on
\[
\mathcal{H}(\omega_{0})=\{\phi\in C^{\infty}(M)|\omega_{0}+\sqrt{-1}%
\partial_{B}\overline{\partial}_{B}\phi>0\},
\]
that is
\begin{equation}
K_{\eta_{0},\theta_{\varepsilon}}(\phi)\geq f(J_{\eta_{0}}(\phi)), \label{3}%
\end{equation}
for any $\varepsilon$ and $\phi\in\mathcal{H}(\omega_{0}),$ where $f(t)$ is a
monotone increasing function with $\lim_{t\rightarrow\infty}f(t)=\infty.$
Then
\[
||\phi_{\varepsilon}(t)||_{C^{0}}\leq C
\]
for any $\varepsilon$ and $t.$
\end{proposition}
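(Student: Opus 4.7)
The plan is to exploit the monotonicity of the twisted Mabuchi $K$-energy along the twisted Sasaki-Ricci flow, convert the resulting upper bound on $K_{\eta_{0},\theta_{\varepsilon}}$ into a uniform bound on $J_{\eta_{0}}$ via the properness hypothesis (\ref{3}), and then combine this with the oscillation bound (\ref{4}) and a normalization coming from the uniform $C^{0}$-bound on $\overset{\centerdot}{\phi}_{\epsilon}(t)$ to obtain the desired $C^{0}$-estimate on $\phi_{\varepsilon}(t)$.

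First, I would verify (this is presumably the content of the unwritten (\ref{5})) that along the twisted Sasaki-Ricci flow one has
\[
\tfrac{d}{dt}K_{\eta_{0},\theta_{\varepsilon}}(\phi_{\varepsilon}(t))\leq 0,
\]
so that $K_{\eta_{0},\theta_{\varepsilon}}(\phi_{\varepsilon}(t))\leq K_{\eta_{0},\theta_{\varepsilon}}(\phi_{\varepsilon}(0))$ uniformly in $\varepsilon$ and $t$. The twisting term $\theta_{\varepsilon}$ contributes a nonnegative piece to $-\tfrac{d}{dt}K_{\eta_{0},\theta_{\varepsilon}}$ since $\sqrt{-1}\partial_{B}\overline{\partial}_{B}\log(\|S\|_{h}^{2}+\varepsilon^{2})\geq 0$, mirroring the argument used in (\ref{U2}). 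Properness (\ref{3}) then yields $f(J_{\eta_{0}}(\phi_{\varepsilon}(t)))\leq C$, hence $J_{\eta_{0}}(\phi_{\varepsilon}(t))\leq C$ uniformly.

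Second, I would use (this is presumably the content of (\ref{6})) the standard identity comparing $J_{\eta_{0}}$ to the average, namely
\[
J_{\eta_{0}}(\phi)\geq \tfrac{1}{V}\int_{M}\phi\, d\mu_{0}-C_{0}\,osc(\phi),
\]
together with (\ref{4}) to deduce $\tfrac{1}{V}\int_{M}\phi_{\varepsilon}(t)\, d\mu_{0}\leq C$ and then $osc(\phi_{\varepsilon}(t))\leq C$. At this stage the entire content of the argument has been to upgrade properness of the functional into oscillation control of the potential.

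Finally, I would convert the oscillation bound into a two-sided $C^{0}$-bound by pinning the average. Using the Monge--Ampere equation (\ref{2023-1A}) and the uniform bound $\|\overset{\centerdot}{\phi}_{\epsilon}(t)\|_{C_{B}^{0}}\leq C$ from Proposition \ref{UP1}, the relation
\[
\beta\phi_{\varepsilon}(t)=\overset{\centerdot}{\phi}_{\varepsilon}(t)-\log\tfrac{\omega_{\phi_{\varepsilon}}^{n}\wedge\eta_{0}}{\omega_{\varepsilon}^{n}\wedge\eta_{0}}-F_{\varepsilon}-\beta k\chi_{\varepsilon}
\]
together with Jensen's inequality applied to the volume-form ratio gives a uniform upper bound on $\tfrac{1}{V}\int_{M}\phi_{\varepsilon}(t)\, d\mu_{\varepsilon}(t)$ from below; combined with the already established upper bound on the average and with the oscillation bound, this fixes $\sup\phi_{\varepsilon}(t)$ and $\inf\phi_{\varepsilon}(t)$ uniformly. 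The main obstacle I expect is the monotonicity step: one must verify that the conic/twisting corrections to the transverse Mabuchi functional really do contribute with the correct sign along the twisted flow uniformly in $\varepsilon$, and that the comparison of basic averages computed against $d\mu_{0}$ versus $d\mu_{\varepsilon}(t)$ is controlled (the volume-form distortion needs the $C^{0}$-bound on $\overset{\centerdot}{\phi}_{\varepsilon}$ and the integrability of the twisting potential); once these are in place the rest is a standard Mabuchi-energy/properness argument in the Sasaki setting.
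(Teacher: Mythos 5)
Your overall strategy --- monotonicity of $K_{\eta_0,\theta_\varepsilon}$ along the twisted flow (this is Proposition \ref{P41}(1), not the content of (\ref{5}) as you guessed), properness (\ref{3}) to bound $J_{\eta_0}(\phi_\varepsilon(t))$, and finally pinning the additive constant through the Monge--Amp\`ere equation (\ref{2023-1A}), the Perelman bound $\|\dot\phi_\varepsilon\|_{C^0}\le C$ and Jensen's inequality --- is the same skeleton the paper intends when it cites (\ref{4}), (\ref{5}), (\ref{6}). But your middle step contains a genuine gap: the ``standard identity'' $J_{\eta_0}(\phi)\ge \frac1V\int_M\phi\,d\mu_0-C_0\,\mathrm{osc}(\phi)$ is false. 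Both $J_{\eta_0}$ and $\mathrm{osc}$ are invariant under $\phi\mapsto\phi+c$ (the two measures entering $J_{\eta_0}$ have the same total mass $V$), while $\frac1V\int_M\phi\,d\mu_0$ shifts by $c$; already $\phi\equiv c$ with $c\to+\infty$ violates the inequality, and it is not what (\ref{6}) says. Moreover, even if one granted it, deducing both $\frac1V\int_M\phi_\varepsilon\,d\mu_0\le C$ and $\mathrm{osc}(\phi_\varepsilon)\le C$ from it together with (\ref{4}) is circular: you would get $\frac1V\int\phi\,d\mu_0\le C+C_0\bigl(\frac AV\int\phi\,d\mu_0+B\bigr)$, which closes only if $C_0A<1$, and nothing guarantees that.

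The repair uses the actual inequalities and the tools you already invoke, in a different order. From the left half of (\ref{5}) and $J_{\eta_0}(\phi_\varepsilon(t))\le C$ you get $\frac1V\int_M\phi_\varepsilon\,d\mu_{\phi_{\varepsilon t}}\le C$; combined with the Aubin-type inequality $I_{\eta_0}\le (n+1)J_{\eta_0}$ (or, alternatively, with (\ref{6}) together with the lower bound $K_{\eta_0,\theta_\varepsilon}\ge f(J_{\eta_0})\ge f(0)$ that properness itself provides, once your Jensen argument is upgraded to the evolving measure), this yields the upper bound $\frac1V\int_M\phi_\varepsilon\,d\mu_0=I_{\eta_0}(\phi_\varepsilon)+\frac1V\int_M\phi_\varepsilon\,d\mu_{\phi_{\varepsilon t}}\le C$. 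Only now does (\ref{4}) give $\mathrm{osc}(\phi_\varepsilon(t))\le C$, and hence $\sup\phi_\varepsilon\le C$. For the bound from below, your final step is correct and is exactly what is needed: writing $\beta\phi_\varepsilon=\dot\varphi_\varepsilon-\log\frac{\omega_{\phi_\varepsilon}^n\wedge\eta_0}{\omega_\varepsilon^n\wedge\eta_0}-F_\varepsilon$, integrating against the reference measure, using $\|\dot\phi_\varepsilon\|_{C^0}\le C$, Jensen on the volume ratio, and the uniform upper bound $F_\varepsilon\le C$ gives a lower bound on the average of $\phi_\varepsilon$, hence $\sup\phi_\varepsilon\ge -C$ and, with the oscillation bound, $\inf\phi_\varepsilon\ge -C$. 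You should also record that $K_{\eta_0,\theta_\varepsilon}(\phi_\varepsilon(0))$ is bounded uniformly in $\varepsilon$ (this uses the uniform bounds on $c_\varepsilon(0)$, $\chi_\epsilon$ and the twisted Ricci potential terms), since the monotonicity argument needs a uniform starting value.
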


\subsection{Regularity of the Limit Space and Its Smooth Convergence}

Firstly, we will apply our previous results plus Cheeger-Colding-Tian
structure theory for K\"{a}hler manifolds (\cite{chlw}, \cite{cct}, \cite{tw2}
and \cite[Theorem2.3]{tz}) to study the structure of desired limit spaces.

Since $(M,\eta,\xi,\Phi,g,\omega)$ is a compact regular Sasakian manifold. By
the first structure theorem on Sasakian manifolds, $M$\ is a principal
$\mathbb{S}^{1}$-bundle over $Z$ which is also a $\mathbb{Q}$-factorial,
polarized, projective variety such that there is a Riemannian submersion$\ \pi
:(M,g,\omega)\rightarrow(Z,h,\omega_{h})$ with $g=g^{T}+\eta\otimes\eta,$
$g^{T}=\pi^{\ast}(h)$ and\ $\frac{1}{2}d\eta=\pi^{\ast}(\omega_{h}).$ The
orbit $\xi_{x}$ is compact for any $x\in M,$ we then define the transverse
distance function as
\[%
\begin{array}
[c]{c}%
d^{T}(x,y)\triangleq d_{g}(\xi_{x},\xi_{y})=d_{h}(\pi(x),\pi(y)),
\end{array}
\]
where $d$ is the distance function defined by the Sasaki metric $g.$ We define
a transverse ball $B_{\xi,g}(x,r)$ as follows:
\[%
\begin{array}
[c]{c}%
B_{\xi,g}(x,r)=\left\{  y:d^{T}(x,y)<r\right\}  =\left\{  y:d_{h}(\pi
(x),\pi(y))<r\right\}  .
\end{array}
\]
Note that when $r$ small enough, $B_{\xi,g}(x,r)$ is a trivial $\mathbb{S}%
^{1}$-bundle over the geodesic ball $B_{h}(\pi(x),r)$.

It follows from our previous work \cite{cchlw} and \cite{cht} (\cite{tw2} and
\cite[Theorem2.3]{tz}) that

\begin{theorem}
\label{T61} Let $(M_{i},(1-\beta)D^{T},\eta_{i},\xi,\Phi_{i},g_{i},\omega
_{i})$ be a sequence of log transverse Fano regular Sasakian $(2n+1)$%
-manifolds with conic Sasaki metrics $g_{i}=g_{i}^{T}+\eta_{i}\otimes\eta_{i}$
such that for basic potentials $\varphi_{i}$
\[%
\begin{array}
[c]{c}%
\eta_{i}=\eta+d_{B}^{C}\varphi_{i}%
\end{array}
\]
and
\[%
\begin{array}
[c]{c}%
d\eta_{i}=d\eta+\sqrt{-1}\partial_{B}\overline{\partial}_{B}\varphi_{i}.
\end{array}
\]
We denote that $(Z_{i},(1-\beta)D,h_{i},J_{i},\omega_{h_{i}})$ are a sequence
of projective K\"{a}hler manifolds which are the corresponding foliation leave
space with respect to $(M_{i},(1-\beta)D^{T},\eta_{i},\xi,\Phi_{i}%
,g_{i},\omega_{i})$ such that
\[%
\begin{array}
[c]{c}%
\omega_{g_{i}^{T}}=\frac{1}{2}d\eta_{i}=\pi^{\ast}(\omega_{h_{i}});\text{
}\Phi_{i}=\pi^{\ast}(J_{i}).
\end{array}
\]
Suppose that $(M_{i},(1-\beta)D^{T},\eta_{i},\xi,\Phi_{i},g_{i},\omega_{i})$
satisfies
\[%
\begin{array}
[c]{c}%
\int_{M}|\mathrm{Ric}_{g_{i}^{T}}^{\mathrm{T}}-(1-\beta)[D^{T}]|^{p}\omega
_{i}^{n}\wedge\eta\leq\Lambda,
\end{array}
\]
and
\[%
\begin{array}
[c]{c}%
\mathrm{Vol}(B_{\xi,g_{i}^{T}}(x_{i},1))\geq\upsilon
\end{array}
\]
for some $p>n,$ $\Lambda>0,$ $\upsilon>0$. Then passing to a subsequence if
necessary, $(M_{i},(1-\beta)D^{T},\Phi_{i},g_{i},x_{i})$ converges in the
Cheeger-Gromov sense to limit length spaces $(M_{\infty},(1-\beta)D_{\infty
}^{T},\Phi_{\infty},d_{\infty},x_{\infty})$ and the corresponding leave space
$(Z_{i},(1-\beta)D,J_{i},h_{i},\pi(x_{i}))$ converges to $(Z_{\infty}%
,(1-\beta)D_{\infty},J_{\infty},h_{\infty},\pi(x_{\infty}))$ such that

\begin{enumerate}
\item for any $r>0$ and $p_{i}\in M_{i}$ with $p_{i}\rightarrow p_{\infty}\in
M_{\infty},$%
\[%
\begin{array}
[c]{c}%
\mathrm{Vol}(B_{h_{i}}(\pi(p_{i}),r))\rightarrow\mathcal{H}^{2n}(B_{h_{\infty
}}(\pi(p_{\infty}),r))
\end{array}
\]
and
\[%
\begin{array}
[c]{c}%
\mathrm{Vol}(B_{\xi,g_{i}^{T}}(p_{i},r))\rightarrow\mathcal{H}^{2n}%
(B_{\xi,g_{\infty}^{T}}(p_{\infty},r)).
\end{array}
\]
Moreover,
\[%
\begin{array}
[c]{c}%
\mathrm{Vol}(B(p_{i},r))\rightarrow\mathcal{H}^{2n+1}(B(p_{\infty},r)),
\end{array}
\]
where $\mathcal{H}^{m}$ denotes the $m$-dimensional Hausdorff measure.

\item $M_{\infty}$ is a $\mathbb{S}^{1}$-orbibundle over the normal projective
variety $Z_{\infty}=M_{\infty}/\mathcal{F}_{\xi}.$

\item $Z_{\infty}=\mathcal{R}\cup\mathcal{S}$ is a normal projective variety
and $\mathcal{S=}\overline{\mathcal{S}}\cup D_{\infty}$ such that
$\overline{\mathcal{S}}$ is a closed singular set of at least codimension two
and $\mathcal{R}$ consists of points whose tangent cones are $\mathbb{R}%
^{2n}.$

\item The convergence on the regular part of $M_{\infty}$ which is a
$\mathbb{S}^{1}$-principle bundle over $\mathcal{R}$ in the $(C_{B}^{\alpha
}\cap L_{B}^{2,p})$-topology for any $0<\alpha<2-\frac{n}{p}$.
\end{enumerate}
\end{theorem}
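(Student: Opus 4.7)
The plan is to view this as a Sasaki analogue of the Cheeger--Colding--Tian structure theory for conic K\"ahler manifolds with $L^p$-Ricci bound, proved by descending everything to the leaf space $Z_i$, applying the existing K\"ahler theory there, and lifting the limit back up via the $\mathbb{S}^1$-bundle structure. The first step is to transfer the hypotheses to the leaf space. Since $(M_i,\eta_i,\xi,\Phi_i,g_i)$ is regular, Proposition \ref{P21} gives an $\mathbb{S}^1$-principal bundle $\pi_i:(M_i,g_i,\omega_i)\to(Z_i,h_i,\omega_{h_i})$ with $g_i^T=\pi_i^*(h_i)$, $\tfrac12 d\eta_i=\pi_i^*\omega_{h_i}$, and $\omega_i^n\wedge\eta_i$ a Fubini-type product of $\omega_{h_i}^n$ with the fiber measure. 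Consequently the assumed bound
\[
\int_{M_i}|\mathrm{Ric}^{\mathrm{T}}_{g_i^T}-(1-\beta)[D^T]|^p\,\omega_i^n\wedge\eta_i\le\Lambda
\]
descends to an $L^p$ bound on the conic Ricci current $\mathrm{Ric}_{h_i}-(1-\beta)[D]$ on the K\"ahler pair $(Z_i,(1-\beta)D,h_i)$, and the transverse volume non-collapsing $\mathrm{Vol}(B_{\xi,g_i^T}(x_i,1))\ge\upsilon$ translates to non-collapsing of $B_{h_i}(\pi_i(x_i),1)$ on $Z_i$.

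Next I would invoke the K\"ahler conic Cheeger--Colding--Tian structure theorem (Tian--Wang \cite{tw1}, \cite{tw2}, extending \cite{cct}, \cite{ds}, and used in our companion work \cite{cchlw}, \cite{cht}). Under an $L^p$-conic Ricci bound with $p>n$ and non-collapsing, a subsequence of $(Z_i,(1-\beta)D,J_i,h_i,\pi_i(x_i))$ Gromov--Hausdorff converges to a limit length space $(Z_\infty,(1-\beta)D_\infty,J_\infty,h_\infty,\pi(x_\infty))$ which is a normal projective variety via a partial $C^0$-estimate in the Donaldson--Sun style adapted to the conic setting. The singular stratification $Z_\infty=\mathcal{R}\cup\mathcal{S}$ comes from the $\epsilon$-regularity theorem: points with tangent cone $\mathbb{R}^{2n}$ form the regular set $\mathcal{R}$, while $\mathcal{S}$ splits as $\mathcal{S}=\overline{\mathcal{S}}\cup D_\infty$ where $D_\infty$ is the codimension-two conic stratum arising from the limit of $(1-\beta)D$, and $\overline{\mathcal{S}}$ is the "deep" singular set of Hausdorff codimension at least four, obtained by the standard slicing and volume-convergence arguments of \cite{cct}.

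The third step is to lift this convergence back to the Sasaki side. Because the Reeb vector field $\xi$ is a unit Killing field of fixed length and the $\mathbb{S}^1$-action it generates is isometric on each $M_i$, the principal bundle $M_i\to Z_i$ together with its connection $\eta_i$ subconverges (by Arzel\`a--Ascoli on $\mathcal{R}$ and the equivariant Gromov--Hausdorff compactness for isometric $\mathbb{S}^1$-actions) to an $\mathbb{S}^1$-space $M_\infty$ over $Z_\infty$. Over $\mathcal{R}$ one gets a smooth principal bundle; over the orbifold-type points of $\overline{\mathcal{S}}$ the $\mathbb{S}^1$-action has finite cyclic stabilizers, and one obtains an $\mathbb{S}^1$-orbibundle $M_\infty\to Z_\infty$, establishing claim (2). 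Volume convergence in (1) is then immediate: Colding's volume convergence on $Z_i$ gives the first two statements, and the Fubini decomposition $\mathrm{Vol}(B(p_i,r))$ $=$ $\int_{B_{\xi,g_i^T}(p_i,r)}|\eta_i|\cdots$ combined with the uniformly bounded Reeb length produces the third.

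Finally, the convergence regularity on the regular part follows from elliptic bootstrapping in local transverse K\"ahler potentials: on $\mathcal{R}$, the $L^p$-Ricci bound together with harmonic coordinates of Anderson type gives $h_i\to h_\infty$ in $C^\alpha\cap L^{2,p}_{\mathrm{loc}}$ with $0<\alpha<2-n/p$, and pulling back through $\pi_i$ yields the corresponding basic convergence of $g_i^T$ on $\pi^{-1}(\mathcal{R})$, which is the content of (4). The principal technical obstacle throughout is that the Ricci curvature is a genuine measure along $D^T$, so the Cheeger--Colding cut-off, Poincar\'e, and segment inequalities must be invoked in their $L^p$-Ricci (rather than pointwise Ricci) form and then intertwined with the conic correction $(1-\beta)[D^T]$; this is precisely the setting in which Tian--Wang \cite{tw1}, \cite{tw2} verified the K\"ahler case, and the Sasaki statement reduces to theirs by the transverse--to--basic dictionary enabled by the first structure theorem.
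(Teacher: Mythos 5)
Your proposal follows essentially the same route the paper intends: the paper does not write out a proof but cites its companion works \cite{cchlw}, \cite{cht} together with Tian--Wang \cite{tw2} and Tian--Zhang \cite{tz}, and the argument there is exactly your scheme of descending the $L^{p}$ transverse Ricci bound and non-collapsing to the leaf space via the first structure theorem, applying the conic $L^{p}$ Cheeger--Colding--Tian theory on the K\"ahler quotient, and lifting back through the $\mathbb{S}^{1}$-(orbi)bundle with elliptic bootstrapping in transverse harmonic/K\"ahler potentials for the $C_{B}^{\alpha}\cap L_{B}^{2,p}$ convergence on the regular part. The only caveat is your claim that $\overline{\mathcal{S}}$ has codimension at least four, which is stronger than what the $L^{p}$-Ricci hypothesis alone delivers; the theorem only asserts codimension at least two, so this over-claim does not affect the validity of the argument for the stated conclusion.
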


Now by using the convergence theorem in Theorem \ref{T61}, we have the
regularity of the limit space. We define a family $g_{i}^{T}(t)$ of conic
Sasaki-Ricci flow by%
\[
(M,g_{i}^{T}(t))=(M,g^{T}(t_{i}+t))
\]
for $t\geq-1$ and $t_{i}\rightarrow\infty$ and for $g_{i}^{T}(t)=\pi^{\ast
}(h_{i}(t))$
\[
(Z,h_{i}(t))=(M,h_{i}(t_{i}+t)).
\]
Now for the associated transverse Ricci potential $u_{i}(t)$ as in Proposition
\ref{UP1}, we have
\[
||u_{i}(t)||_{C^{0}}+||\nabla^{T}u_{i}(t)||_{C^{0}}+||\Delta_{B}%
u_{i}(t)||_{C^{0}}\leq C.
\]
Moreover, it follows from (\ref{a2}) that
\[%
\begin{array}
[c]{c}%
\int_{M}|\nabla^{T}\nabla^{T}u_{i}|^{2}\omega(t)^{n}\wedge\eta_{0}\rightarrow0
\end{array}
\]
as $i\rightarrow\infty.$ Furthermore, by Theorem \ref{T31} and Theorem
\ref{T61}, passing to a subsequence if necessary, we have at $t=0,$%
\[
(M_{i},(1-\beta)D^{T},g_{i}^{T}(0))\rightarrow(M_{\infty},(1-\beta
)D^{T},g_{\infty}^{T},d_{\infty}^{T})
\]
such that
\[
(Z_{i},(1-\beta)D,h_{i}(0))\rightarrow(Z_{\infty},(1-\beta)D,h_{\infty
},d_{h_{\infty}})
\]
in the Cheeger-Gromov sense. Moreover,
\begin{equation}%
\begin{array}
[c]{c}%
(g_{i}^{T}(0),u_{i}(0))\overset{C_{B}^{\alpha}\cap L_{B}^{2,p}}{\rightarrow
}(g_{\infty}^{T},u_{\infty})
\end{array}
\label{c2}%
\end{equation}
on $(M_{\infty})_{reg}$ which is a $\mathbb{S}^{1}$-principle bundle over
$\mathcal{R}.$ The convergence of $u_{i}(0)$ on the regular part follows from
the elliptic regularity (\cite{co1}, \cite{co2}) of
\[%
\begin{array}
[c]{c}%
\Delta_{B}u_{i}(0)=n-\mathrm{R}^{\mathrm{T}}(g_{i}^{T}(0))\in L_{B}^{p}.
\end{array}
\]

\begin{theorem}
\label{T63} (\cite[Theorem 10]{cchlw}) Suppose (\ref{c2}) holds, then
$g_{\infty}^{T}$ is smooth and satisfies the conic Sasaki-Ricci soliton
equation%
\begin{equation}%
\begin{array}
[c]{c}%
\mathrm{Ric}^{\mathrm{T}}(g_{\infty}^{T})-(1-\beta)[D^{T}]+\mathrm{Hess}%
^{T}(u_{\infty})=\beta g_{\infty}^{T}%
\end{array}
\label{c1}%
\end{equation}
on $(M_{\infty})_{reg}$ which is a $\mathbb{S}^{1}$-bundle over $\mathcal{R}.$
Moreover, $\Phi_{\infty}$ is smooth and $g_{\infty}^{T}$ is K\"{a}hler with
respect to $\Phi_{\infty}=\pi^{\ast}(J_{\infty}).$
\end{theorem}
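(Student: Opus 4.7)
The plan is to pass to the limit in the identity that characterizes the twisted Ricci potential and then bootstrap regularity on the smooth stratum. At each finite stage of the conic Sasaki-Ricci flow, rewriting the defining relation (\ref{U1}) for the transverse twisted Ricci potential $u_{\varepsilon}$ in tensorial form gives
\begin{equation*}
\mathrm{Ric}^{\mathrm{T}}(g_{i}^{T}(0)) - (1-\beta)\theta_{\varepsilon_{i}} + \mathrm{Hess}^{T}(u_{i}(0)) = \beta\, g_{i}^{T}(0)
\end{equation*}
on $M_{i}\setminus D^{T}$, where the $\theta_{\varepsilon_{i}}$ approach the current of integration along $D^{T}$ and vanish on the regular stratum. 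The key observation is that on $(M_{\infty})_{\mathrm{reg}}$ one has the $C_{B}^{\alpha}\cap L_{B}^{2,p}$-convergence (\ref{c2}), so the Ricci tensor and Hessian of the limit are defined distributionally, and the approximating identity passes to the limit. This yields the claimed soliton equation (\ref{c1}) in the weak sense on the open bundle $(M_{\infty})_{\mathrm{reg}}$ over $\mathcal{R}$.

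To produce genuine smoothness I would exploit the fact that on the regular part the holomorphy potential condition $\nabla^{T}\nabla^{T}u_{\infty}=0$ is already built in: the integral decay (\ref{a2}) combined with the $L_{B}^{2,p}$-convergence of $u_{i}(0)$ forces $\nabla^{T}\nabla^{T}u_{\infty}=0$ on $(M_{\infty})_{\mathrm{reg}}$, so $\nabla_{g_{\infty}^{T}}^{T}u_{\infty}$ is a transverse holomorphic vector field. This, together with the elliptic system arising from (\ref{c1}), lets me bootstrap: rewriting the soliton equation in basic local holomorphic coordinates as a quasilinear elliptic equation for the transverse Kähler potential of $g_{\infty}^{T}$ of Monge-Ampère type,
\begin{equation*}
\log\det(g_{i\overline{j}}^{T}) + u_{\infty} = \beta\varphi_{\infty} + F_{\infty},
\end{equation*}
and applying standard Schauder and $L^{p}$ elliptic regularity (as in the parabolic regularity arguments used in Section 3 and in \cite{co1,co2}) promotes the $C_{B}^{\alpha}\cap L_{B}^{2,p}$-regularity to $C_{B}^{\infty}$ on $(M_{\infty})_{\mathrm{reg}}$. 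Since the first Chern class relation is preserved and $\theta_{\varepsilon_{i}}\to 0$ uniformly away from $D^{T}$, the $(1-\beta)[D^{T}]$ term reappears only as a boundary contribution when we reinterpret the equation across $D^{T}$, giving (\ref{c1}) as stated.

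For the final assertion about the transverse complex structure, I would use that the convergence in Theorem \ref{T61} is in $(C_{B}^{\alpha}\cap L_{B}^{2,p})$-topology simultaneously for the metric, the Reeb field, and $\Phi_{i}$, and that the leaf space $(Z_{i},J_{i},h_{i})\to(Z_{\infty},J_{\infty},h_{\infty})$ converges on the regular stratum $\mathcal{R}$ of $Z_{\infty}$. Since $\pi:M_{i}\to Z_{i}$ are Riemannian submersions with $\Phi_{i}=\pi^{\ast}J_{i}$ and $g_{i}^{T}=\pi^{\ast}h_{i}$, the identity $\Phi_{\infty}=\pi^{\ast}(J_{\infty})$ passes to the limit on $(M_{\infty})_{\mathrm{reg}}$. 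Then the Kähler-Einstein-type bootstrap on $(Z_{\infty})_{\mathrm{reg}}=\mathcal{R}$, applied to the downstairs soliton equation for $(h_{\infty},u_{\infty})$ satisfied by the same argument, yields smoothness of $J_{\infty}$ and hence of $\Phi_{\infty}$, with $g_{\infty}^{T}$ Kähler with respect to $\Phi_{\infty}$.

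The principal obstacle is ensuring that the weak limit of the soliton equation, obtained with only $C_{B}^{\alpha}\cap L_{B}^{2,p}$ a priori regularity, can be legitimately treated as an elliptic equation whose solution is smooth. This requires the Cheeger-Colding-Tian type description of $(M_{\infty})_{\mathrm{reg}}$ as a smooth $\mathbb{S}^{1}$-bundle over the smooth manifold stratum $\mathcal{R}$ (so that one actually has bona fide local charts in which to run Schauder theory), and it requires the transverse holomorphic structure $\Phi_{\infty}$ to be sufficiently regular to interpret $\mathrm{Hess}^{T}(u_{\infty})$ as $\sqrt{-1}\partial_{B}\bar{\partial}_{B}u_{\infty}$ consistently with the limit $g_{\infty}^{T}$; the simultaneous bootstrap of $g_{\infty}^{T}$, $u_{\infty}$ and $\Phi_{\infty}$, handled via the Monge-Ampère formulation downstairs on $\mathcal{R}$ and then lifted to $(M_{\infty})_{\mathrm{reg}}$, is the technical heart of the argument.
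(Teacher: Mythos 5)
Your outline has the right overall shape and overlaps with the paper at the start: pass the potential identity (\ref{U1}) to the limit using the convergence (\ref{c2}), use the decay (\ref{a2}) to kill the $(2,0)$-part of the Hessian so that the full $\mathrm{Hess}^{T}(u_{\infty})$ appears in (\ref{c1}), and conclude with $\Phi_{\infty}$. But the bootstrap you propose is circular as stated. You rewrite the soliton equation as a transverse complex Monge--Amp\`ere equation ``in basic local holomorphic coordinates'' on $(M_{\infty})_{reg}$; the existence of such coordinates, i.e.\ the regularity of the limit transverse holomorphic structure $\Phi_{\infty}$ (equivalently of $J_{\infty}$ on $\mathcal{R}$), is itself part of the conclusion of the theorem, and with only $C_{B}^{\alpha}\cap L_{B}^{2,p}$ control of the limit you are not entitled to it. You acknowledge this as ``the technical heart'' but do not resolve it, and resolving it is precisely the content of the proof. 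The paper avoids the circle by working in basic \emph{harmonic} foliation normal coordinates, which exist at this low regularity, writing the soliton equation as the quasilinear elliptic system (\ref{c3}) for the metric components themselves, and recovering the smoothness of $\Phi_{\infty}$ only \emph{afterwards}, from elliptic regularity applied to the parallelism $\nabla^{T}_{g_{\infty}^{T}}\Phi_{\infty}=0$ with respect to the already-smooth metric.

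There is a second, related gap: your scheme has no independent elliptic equation for $u_{\infty}$. The source of the metric equation involves second derivatives of $u_{\infty}$ (in your Monge--Amp\`ere form, $u_{\infty}$ enters the right-hand side), so with $u_{\infty}$ known only in $L_{B}^{2,p}$ the Schauder iteration stalls after one step unless the regularity of $u_{\infty}$ is improved simultaneously. The paper closes the loop by passing (\ref{b2}) to the limit to obtain the scalar elliptic equation (\ref{c4}) for $u_{\infty}$ in $L_{B}^{p}((M_{\infty})_{reg})$, and then bootstrapping the coupled system (\ref{c3})--(\ref{c4}) alternately, as in \cite{pe}. You never invoke (\ref{b2}); incorporating it and replacing the holomorphic charts by harmonic coordinates would convert your outline into essentially the paper's argument.
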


\begin{proof}
Since all $g_{\infty}^{T}$ and $u_{\infty}$ are basic, in the basic harmonic
foliation normal coordinates $\{x,x^{1},x^{2},\cdots,x^{2n}\}$ with
$z^{i}=x^{i}+\sqrt{-1}x^{n+i}$, the conic Sasaki-Ricci soliton equation
(\ref{c1}) is equivalent to
\begin{equation}%
\begin{array}
[c]{ll}
& (g^{T})^{\alpha\beta}\frac{\partial^{2}g_{\gamma\delta}^{T}}{\partial
x^{\beta}\partial x^{\alpha}}\\
= & \frac{\partial^{2}u_{\infty}}{\partial x^{\gamma}\partial x^{\delta}%
}+Q(g^{T},\partial g^{T})_{\gamma\delta}+T(g^{-1},\partial g^{T},\partial
u)_{\gamma\delta}-(1-\beta)\overset{0}{g}_{\gamma\delta}^{T}-\beta
g_{\gamma\delta}^{T}.
\end{array}
\label{c3}%
\end{equation}
By (\ref{a2}), (\ref{c3}) holds in $L_{B}^{2}((M_{\infty})_{reg})$. But
$g_{\infty}^{T}$ and $u_{\infty}$ are $L_{B}^{2,p}$, then (\ref{c3}) holds in
$L_{B}^{p}((M_{\infty})_{reg})$ too. On the other hand, by (\ref{b2}) and
(\ref{c1}), we have that
\begin{equation}%
\begin{array}
[c]{c}%
g_{\alpha\beta}^{T}\frac{\partial^{2}u_{\infty}}{\partial x^{\beta}\partial
x^{\alpha}}=(g^{T})^{\alpha\beta}\frac{\partial u_{\infty}}{\partial
x^{\alpha}}\frac{\partial u_{\infty}}{\partial x^{\alpha}}-2u_{\infty
}+2a_{\infty}%
\end{array}
\label{c4}%
\end{equation}
in $L_{B}^{p}((M_{\infty})_{reg}).$ Then a bootstrap argument as in \cite{pe}
to the elliptic systems (\ref{c3}) and (\ref{c4}) shows that $g_{\infty}^{T}$
and $u_{\infty}$ are smooth on $(M_{\infty})_{reg}$. The elliptic regularity
shows that $\Phi_{\infty}$ is smooth since $\nabla_{g_{\infty}^{T}}^{T}%
\Phi_{\infty}=0.$
\end{proof}

By applying the argument as in \cite[Theorem 1.2]{tz} (also \cite{cchlw}) to
the normal orbifold variety $Z_{\infty}$ which is mainly depended on the
Perelman's pseudolocality theorem (\cite{p1}), we have the smooth convergence
of the conic Sasaki-Ricci flow on the regular set $(M_{\infty})_{reg}$ which
is a $\mathbb{S}^{1}$-principle bundle over $\mathcal{R}$ and $\mathcal{R}$ is
the regular set of $Z_{\infty}$:

\begin{theorem}
\label{T64}The limit $(M_{\infty},(1-\beta)D^{T},d_{\infty})$ is smooth on the
regular set $(M_{\infty})_{reg}$ which is a $\mathbb{S}^{1}$-principle bundle
over the regular set $\mathcal{R}$ of $Z_{\infty}$ and $d_{\infty}^{T}$ is
induced by a smooth conic Sasaki-Ricci soliton $g_{\infty}^{T}$ and
$g^{T}(t_{i})$ converge to $g_{\infty}^{T}$ in the $C^{\infty}$-topology on
$(M_{\infty})_{reg}.$ Moreover, the singular set $\overline{\mathcal{S}}$ of
$Z_{\infty}$ is the codimension two orbifold singularities which are klt.
\end{theorem}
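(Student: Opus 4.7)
The plan is to start from what has already been secured in Theorem \ref{T63}, namely that on the regular part $(M_{\infty})_{\mathrm{reg}}$ (a $\mathbb{S}^{1}$-principal bundle over $\mathcal{R}$) the limit soliton potential $u_{\infty}$ and transverse metric $g_{\infty}^{T}$ are smooth and satisfy the conic Sasaki--Ricci soliton equation (\ref{c1}), and then lift the weak $C_{B}^{\alpha}\cap L_{B}^{2,p}$ convergence (\ref{c2}) of $g_{i}^{T}(0)=g^{T}(t_{i})$ to $C^{\infty}$ convergence. The pointwise estimates of Proposition \ref{UP1} (bounds on scalar curvature, $u_{i}$ and $\nabla^{T}u_{i}$) together with the non-collapsing estimate and the uniform $L^{4}$-bound on $\mathrm{Ric}^{T}-\theta_{\varepsilon}$ from Theorem \ref{T31} put us precisely in the situation where Perelman's pseudolocality theorem applies. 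I would transplant the argument of \cite[Theorem 1.2]{tz} (as already used in the Sasaki setting in \cite{cchlw}) to the conic Sasaki--Ricci flow: on a geodesic ball in $\mathcal{R}$ whose tangent cone is $\mathbb{R}^{2n}$, pseudolocality produces a uniform transverse curvature bound on a definite scale for all $t$ slightly past $t_{i}$, hence Shi-type derivative estimates give uniform $C^{k}$ bounds. Combined with the smoothness of the limit from Theorem \ref{T63}, one then obtains $g^{T}(t_{i})\to g_{\infty}^{T}$ in $C^{\infty}$ on compact subsets of $(M_{\infty})_{\mathrm{reg}}$.

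For the structure of the singular set, the plan is to use Theorem \ref{T61} (the $L^{p}$ Cheeger--Colding--Tian convergence result adapted to our setting). By construction the foliation-basic character of everything reduces the analysis to the leaf space $Z_{\infty}=M_{\infty}/\mathcal{F}_{\xi}$, which is a normal projective variety with $\mathcal{S}=\overline{\mathcal{S}}\cup D_{\infty}$. The $L^{4}$-bound of $\mathrm{Ric}^{T}-(1-\beta)[D^{T}]$ from Theorem \ref{T31}, Perelman's non-collapsing and the diameter bound supply the hypotheses to apply the $L^{p}$ version of Cheeger--Colding--Tian (as in \cite{tw1}, \cite{tw2}, \cite{cht}), which yields that $\overline{\mathcal{S}}$ has Hausdorff codimension at least $4$ in $Z_{\infty}$; once one has a conic Sasaki--Ricci soliton on the regular part and Fano-type cohomological constraints from the log class $-(K_{M}^{T})^{-1}-(1-\beta)D^{T}$, a standard removable-singularity / extension argument for the Kähler--Ricci soliton on $\mathcal{R}$ promotes $\overline{\mathcal{S}}$ to an analytic subvariety of complex codimension at least one, and one can then rule out genuine divisorial singularities away from $D_{\infty}$ so that $\overline{\mathcal{S}}$ has complex codimension two and carries the orbifold structure inherited from the quasi-regular Sasaki quotient.

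To finish with the klt property, I would combine the partial $C^{0}$-estimate alluded to in the introduction (Sasaki analogue of the Donaldson--Sun partial $C^{0}$-estimate, available here thanks to the first structure theorem together with the already-established limit soliton) with the fact that $(Z_{\infty},(1-\beta)D_{\infty})$ is a log $\mathbb{Q}$-Fano pair admitting a conic Kähler--Ricci soliton on its regular set of finite volume. The cone-angle bound $0<\beta<1$ together with the smoothness of $h_{\infty}$ off $\overline{\mathcal{S}}\cup D_{\infty}$ and an integrability estimate for the log discrepancies (as in \cite{tw2}) then forces each local uniformizing group to be a finite subgroup of $U(2)$ acting freely in codimension one, which is exactly the klt condition for the cyclic quotient singularities in the codimension-two locus.

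The main obstacle will be the promotion from $(C_{B}^{\alpha}\cap L_{B}^{2,p})$ convergence to genuine $C^{\infty}$ convergence at the regular points: this requires combining Perelman's pseudolocality with the conic structure of $(1-\beta)D^{T}$, and ensuring that the non-smooth term $\theta_{\varepsilon}$ in (\ref{2023-4}) does not obstruct the derivative estimates when one passes to the limit $\varepsilon\to 0$. A secondary technical point is verifying that the Hausdorff codimension-$4$ estimate from Cheeger--Colding--Tian, established for Kähler manifolds with $L^{p}$ Ricci bounds, transfers to the transverse setting in a way compatible with the $\mathbb{S}^{1}$-orbibundle structure on $M_{\infty}$; this is where the basic / bundle-like character of the flow and the first structure theorem are essential.
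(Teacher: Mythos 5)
Your proposal follows essentially the same route as the paper: the paper proves the smooth convergence exactly by applying Perelman's pseudolocality theorem (with the non-collapsing and potential bounds of Proposition \ref{UP1}) to get transverse curvature bounds at slightly earlier time-slices, then Shi's derivative estimates and the Cheeger--Gromov volume-continuity comparison of \cite[Theorem 1.2]{tz} (as in \cite{cchlw}) to identify the smooth limit with $((M_{\infty})_{reg},d_{\infty}^{T})$, while the structure of $\overline{\mathcal{S}}$ and the klt codimension-two statement come from Theorem \ref{T61} together with the partial $C^{0}$-estimate (Theorem \ref{T65}, Corollary \ref{C62}), just as you outline. The only cosmetic difference is that the paper works with the exhausting sets $\Omega_{r_{j},i,t_{j}}$ and a diagonal subsequence rather than invoking the smoothness already obtained in Theorem \ref{T63}, but the underlying mechanism is identical.
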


The proofs are straightforward from \cite{cchlw}. We give the proof here for completeness.

\begin{proof}
Note that since $g^{T}$ is a transverse K\"{a}hler metric and basic. It is
evolved by the conic Sasaki-Ricci flow, then it follows from the standard
computations as in \cite{co3} and \cite{lz} that
\[%
\begin{array}
[c]{c}%
\frac{\partial}{\partial t}\mathrm{Rm}^{\mathrm{T}}=\Delta_{B}\mathrm{Rm}%
^{\mathrm{T}}+\mathrm{Rm}^{\mathrm{T}}\ast\mathrm{Rm}^{\mathrm{T}}%
+\mathrm{Rm}^{\mathrm{T}}.
\end{array}
\]
Now by Perelman's pseudolocality theorem (\cite{p1}, \cite{tz}) with
Proposition \ref{UP1}, there exists $\varepsilon_{0},$ $\delta_{0},$ $r_{0}$
which depend on $p$ as in the Theorem \ref{T61} such that for any
$(x_{0},t_{0})$, if
\begin{equation}%
\begin{array}
[c]{c}%
\mathrm{Vol}(B_{\xi,g_{i}^{T}(t_{0})}(x_{0},r))\geq(1-\varepsilon
_{0})\mathrm{Vol}(B(0,r))
\end{array}
\label{e1}%
\end{equation}
for some $r\leq r_{0},$ where $\mathrm{Vol}(B(0,r))$ denotes the volume of
Euclidean ball of radius $r$ in $\mathbb{R}^{2n},$ then we have the following
curvature estimate
\begin{equation}%
\begin{array}
[c]{c}%
|\mathrm{Rm}^{\mathrm{T}}|_{g_{i}^{T}}(x,t)\leq\frac{1}{t-t_{0}}%
\end{array}
\label{e2}%
\end{equation}
for all $x\in B_{\xi,g_{i}^{T}(t)}(x_{0},\varepsilon_{0}r)$ and $t_{0}<t\leq
t_{0}+\varepsilon_{0}^{2}r^{2}$ and the volume estimate
\begin{equation}%
\begin{array}
[c]{c}%
\mathrm{Vol}(B_{\xi,g_{i}^{T}(t)}(x_{0},\delta_{0}\sqrt{t-t_{0}}))\geq
(1-\eta)\mathrm{Vol}(B(0,\delta_{0}\sqrt{t-t_{0}}))
\end{array}
\label{e3}%
\end{equation}
for $t_{0}<t\leq t_{0}+\varepsilon_{0}^{2}r^{2}$ and $\eta\leq\varepsilon_{0}$
is the constant such that the $C^{\alpha}$ harmonic radius at $x_{0}$ is
bounded below by $\delta_{0}\sqrt{t-t_{0}}.$

As in Theorem \ref{T61}, the metric $g_{i}^{T}(0)$ converges to $g_{\infty
}^{T}$ in the $(C^{\alpha}\cap L_{B}^{2,p})$-topology on $\mathcal{R}$. Now it
is our goal to show that the metric $g_{i}^{T}(0)$ converges smoothly to
$g_{\infty}^{T}$. For $0<r\leq r_{0}$ and $t\geq-1,$ define
\[%
\begin{array}
[c]{c}%
\Omega_{r,i,t}:=\{x\in M\text{ }|\text{ (\ref{e1}) \textrm{holds on} }%
B_{\xi,g_{i}^{T}(t)}(x,t)\}.
\end{array}
\]
Then (\ref{e3}) implies that
\[%
\begin{array}
[c]{c}%
\Omega_{r,i,t}\subset\Omega_{\delta_{0}\sqrt{s},i,t+s}%
\end{array}
\]
for $0<s\leq\varepsilon_{0}^{2}r^{2}.$

Let $r_{j}$ to be a decreasing sequence of radii such that $r_{j}\rightarrow0$
and $t_{j}=-\varepsilon_{0}r_{j}$. Then by applying \cite[(3.42)]{tz}, we may
assume that
\[%
\begin{array}
[c]{c}%
\Omega_{r_{j},i,t_{j}}\subset\Omega_{r_{j+1},i,t_{j+1}}.
\end{array}
\]
Then by (\ref{e2})%
\begin{equation}%
\begin{array}
[c]{c}%
||\mathrm{Rm}^{\mathrm{T}}||_{g_{i}^{T}(t)}(x,t)\leq\frac{1}{t-t_{j}}%
\end{array}
\label{e4}%
\end{equation}
for all $(x,t)$ with
\[%
\begin{array}
[c]{c}%
d_{g_{i}^{T}(t)}^{T}(x,\Omega_{r_{j},i,t_{j}})\leq\varepsilon_{0}r_{j},\text{
}t_{j}<t\leq0.
\end{array}
\]
By Shi's derivative estimate \cite{shi} to the curvature, there exists a
sequence of constants $C_{k,j,i}$ such that%
\begin{equation}%
\begin{array}
[c]{c}%
||(\nabla^{T})^{k}\mathrm{Rm}^{\mathrm{T}}||_{g_{i}^{T}(0)}(x,t)\leq C_{k,j,i}%
\end{array}
\label{e5}%
\end{equation}
on $\Omega_{r_{j},i,t_{j}}.$ Then passing to a subsequence if necessary, one
can find a subsequence $\{i_{j}\}$ of $\{j\}$ such that
\[%
\begin{array}
[c]{c}%
(\Omega_{r_{j},i_{j},t_{j}},g_{i_{j}}^{T}(t_{j}))\overset{C^{\alpha}%
}{\rightarrow}(\overline{\Omega},g_{\overline{\Omega}}^{T})
\end{array}
\]
and
\[%
\begin{array}
[c]{c}%
(\Omega_{r_{j},i_{j},t_{j}},g_{i_{j}}^{T}(0))\overset{C^{\infty}}{\rightarrow
}(\Omega,g_{\Omega}^{T}),
\end{array}
\]
where $(\overline{\Omega},g_{\overline{\Omega}}^{T})$ and $(\Omega,g_{\Omega
}^{T})$ are smooth Riemannian manifolds and
\[
(\Omega,g_{\Omega}^{T})\text{ \textrm{is isometric to} }((M_{\infty}%
)_{reg},d_{\infty}^{T}).
\]
On the other hand, as in Theorem \ref{T61}, we may also have
\[%
\begin{array}
[c]{c}%
(M,g_{i_{j}}^{T}(t_{j}))\overset{d_{G,H}^{T}.}{\rightarrow}(\overline
{M}_{\infty},\overline{d}_{\infty}^{T})
\end{array}
\]
with $\overline{Z}_{\infty}=\overline{\mathcal{R}}\cup\overline{\mathcal{S}}.$
Then%
\begin{equation}
(M_{\infty})_{reg}\text{ is the }\mathbb{S}^{1}\text{-principle bundle over
}\mathcal{R} \label{f1}%
\end{equation}
and
\begin{equation}
(\overline{M}_{\infty})_{reg}\text{ is the }\mathbb{S}^{1}\text{-principle
bundle over }\overline{\mathcal{R}}. \label{f2}%
\end{equation}
Moreover, by the continuity of volume under the Cheeger-Gromov convergence
(\cite[Claim 3.7]{tz}), we have
\begin{equation}
(\overline{\Omega},g_{\overline{\Omega}}^{T})\text{ is isometric to
}((\overline{M}_{\infty})_{reg},\overline{d}_{\infty}^{T}) \label{f3}%
\end{equation}
and then follows from (\ref{e4}) as in \cite[Claim 3.8]{tz} that
\begin{equation}
(\overline{\Omega},g_{\overline{\Omega}}^{T})\text{ is also isometric to
}(\Omega,g_{\Omega}^{T}). \label{f4}%
\end{equation}
Finally (\ref{f1}), (\ref{f2}), (\ref{f3}) and (\ref{f4}) imply the metric
$g_{i}^{T}(0)$ converges smoothly to $g_{\infty}^{T}$ on $\mathcal{R}$.
\end{proof}

Now we work on the partial $C^{0}$-estimate under the conical Sasaki-Ricci
flow. For the solution $(M,\xi_{0},\eta_{0},\omega(t),g^{T}(t))$ of the
conical Sasaki-Ricci flow and the line bundle $(L,h(t)),$
\[
L:=(K_{M}^{T})^{-1}-(1-\beta)[D^{T}]
\]
with the basic Hermitian metric $h(t)=\omega^{n}(t),$ we work on the evolution
of the basic transverse holomorphic line bundle $(L^{m},h^{m}(t))$ for a large
integer $m$ such that $L^{m}$ is very ample. We consider the basic embedding
(\cite{cchlw}) which is $\mathbb{S}^{1}$-equivariant with respect to the
weighted $\mathbb{C}^{\ast}$-action in $\mathbb{C}^{N_{m}+1}$
\[
\Psi:M\rightarrow(\mathbb{CP}^{N_{m}},\omega_{FS})
\]
defined by the orthonormal basic transverse holomorphic section $\{s_{0}%
,s_{1},\cdots,s_{N}\}$ in $H_{B}^{0}(M,L^{m})$ with $N_{m}=\dim H_{B}%
^{0}(M,L^{m})-1$ with
\[%
\begin{array}
[c]{c}%
\int_{M}(S_{i},S_{j})_{h_{t}^{m}}\omega_{t}^{n}\wedge\eta_{0}=\delta_{ij}.
\end{array}
\]

Define
\begin{equation}%
\begin{array}
[c]{c}%
\mathcal{F}_{m}(x,t):=\sum_{\alpha=0}^{N_{m}}||S(x)||_{h_{t}^{m}}^{2}.
\end{array}
\label{58}%
\end{equation}
We say that, under the conic Sasaki-Ricci flow, $\{(M(t),(1-\beta)D^{T}%
,\omega(t),g^{T}(t)),0\leq t<\infty\}$ has a partial $C^{0}$-estimate if
$L^{m}$ is very ample for a large integer $m$ and $\mathcal{F}_{m}(x,t)$ is a
uniformly bounded below function on $M\times\lbrack0,\infty).$

As in our previous estimate (\cite{cchlw}), based by the uniform bound of the
Sobolev constant (\ref{1}) for the basic function along the conic Sasaki-Ricci
flow and H\"{o}rmander's $L^{2}$-estimate to $\overline{\partial}_{B}%
$-operator on basic $(0,1)$-forms (\cite[Proposition 2.1]{chll}) on $H_{B}%
^{0}(M,L^{m})$, it follows straightforward from Theorem \ref{T61}, Theorem
\ref{T64}, (\cite{ds}), (\cite[(5.4)]{t5}), (\cite{d1}) and (\cite{d2}) that

\begin{theorem}
\label{T65}(\cite{cchlw})\ Suppose (\ref{c2}) holds, we have
\begin{equation}%
\begin{array}
[c]{c}%
\inf_{t_{i}}\inf_{x\in M}\mathcal{F}_{m}(x,t_{i})>0
\end{array}
\label{d11}%
\end{equation}
for a sequence of $m\rightarrow\infty.$
\end{theorem}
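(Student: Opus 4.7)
The plan is to adapt the Donaldson-Sun-Tian peak-section construction to the conic Sasakian setting, following the strategy the authors already developed in \cite{cchlw} for the smooth case and combining it with the conic-angle $L^2$-theory of \cite{cj}. We argue by contradiction: assume that there exist sequences $m_k\to\infty$, $t_{i_k}\to\infty$, and points $x_{i_k}\in M$ with $\mathcal{F}_{m_k}(x_{i_k},t_{i_k})\to 0$. After passing to a subsequence, Theorems \ref{T61} and \ref{T64} yield a pointed Cheeger-Gromov limit $(M(t_{i_k}),g^T(t_{i_k}),(1-\beta)D^T,x_{i_k})\to (M_\infty,g_\infty^T,(1-\beta)D_\infty^T,x_\infty)$ with smooth convergence on the regular $\mathbb{S}^1$-principal bundle part over $\mathcal{R}\subset Z_\infty$, and $g_\infty^T$ satisfying the conic Sasaki-Ricci soliton equation \eqref{c1} with transverse cone angle $2\pi\beta$ along $D_\infty^T$.

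The first step is to build a peak section of $L^{m}$ concentrated near a well-chosen point of the limit. If $\pi(x_\infty)\in\mathcal{R}\setminus D_\infty$, one pulls back a local holomorphic frame of $L^m$ on a small transverse ball centered at $\pi(x_\infty)$, uses the $\mathbb{S}^1$-equivariance to extend it to a basic section, and multiplies by a smooth cutoff at scale $O(1/\sqrt{m})$ to obtain an almost-holomorphic basic section $\widetilde{S}_m$ with $\|\widetilde{S}_m(x_\infty)\|_{h^m}\sim 1$ and $\int_{M_\infty}\|\bar\partial_B\widetilde{S}_m\|_{h^m}^2\to 0$ as $m\to\infty$. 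If $x_\infty$ lies instead over $\overline{\mathcal{S}}$ or near $D_\infty$, one shifts to a nearby point $y_\infty$ in $\mathcal{R}\setminus D_\infty$ at uniformly bounded distance; this shift is harmless because $\overline{\mathcal{S}}$ has codimension two (Theorem \ref{T61}) so a standard capacity estimate keeps the cutoff energy small, while near $D_\infty$ the conic model $\omega_\beta$ admits the same peak construction as in \cite{cj}.

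The second step is to apply H\"ormander's $L^2$-estimate for the $\bar\partial_B$-operator on basic $(0,1)$-forms valued in $L^m$ (\cite[Proposition 2.1]{chll}) on $M_\infty$, using the positivity of the conic Sasaki-Ricci soliton curvature on $(M_\infty)_{\mathrm{reg}}$ to solve $\bar\partial_B v_m=\bar\partial_B\widetilde{S}_m$ with an $O(m^{-1})$ control of $\int\|v_m\|_{h^m}^2$. Setting $S_m:=\widetilde{S}_m-v_m$ gives a genuine holomorphic basic section of $L^m$. The uniform Sobolev inequality of Corollary \ref{C31} together with Moser iteration for $\|S_m\|^2$ converts this $L^2$-bound into a $C^0$-bound, yielding $\|S_m(x_\infty)\|_{h^m}\geq c_0>0$ independent of $m$. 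Transferring $S_m$ back to $M(t_{i_k})$ via the smooth convergence on the regular part (Theorem \ref{T64}) and re-normalizing with respect to the orthonormal basis of $H_B^0(M,L^{m_k})$ produces basic holomorphic sections of $L^{m_k}$ over $M(t_{i_k})$ with norm at $x_{i_k}$ uniformly bounded below, contradicting $\mathcal{F}_{m_k}(x_{i_k},t_{i_k})\to 0$.

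The main obstacle will be item (i): handling the conic divisor $D_\infty$ and the codimension-two orbifold singularities $\overline{\mathcal{S}}$ in $Z_\infty$, which prevent a direct application of H\"ormander on the full limit. This is resolved by coupling the codimension-two capacity estimate on $\overline{\mathcal{S}}$ with the explicit cone-angle model on $D_\infty$ from \cite{cj}. A secondary technical issue, item (ii), is ensuring that every auxiliary object (the cutoff, the frame of $L^m$, the correction $v_m$) is truly basic, so that $S_m$ lies in $H_B^0(M,L^{m_k})$ rather than only in $H^0$ of the K\"ahler cone; this is handled by performing each construction $\mathbb{S}^1$-equivariantly, which is legitimate because the Reeb flow is an isometry of $g^T(t)$ and preserves the transverse holomorphic structure along the entire conic Sasaki-Ricci flow.
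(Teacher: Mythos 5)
Your overall route—argue by contradiction, pass to the Cheeger--Gromov limit of Theorems \ref{T61} and \ref{T64}, build almost-peak basic sections, correct them with the H\"ormander $L^{2}$-estimate for $\overline{\partial}_{B}$ (\cite[Proposition 2.1]{chll}), and convert $L^{2}$ to $C^{0}$ via the uniform Sobolev bound of Corollary \ref{C31}—is exactly the machinery the paper invokes (it defers the details to \cite{cchlw}, \cite{ds} and \cite[(5.4)]{t5}). However, there is a genuine gap at the crucial point of the argument: the case where the bad points $x_{i_k}$ with $\mathcal{F}_{m_k}(x_{i_k},t_{i_k})\to 0$ converge to a point $x_\infty$ lying over $\overline{\mathcal{S}}\cup D_\infty$. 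Your remedy is to ``shift to a nearby regular point $y_\infty$,'' but this cannot close the contradiction: the section obtained by cutting off a local frame at scale $O(1/\sqrt{m})$ around $y_\infty$ and correcting by H\"ormander decays rapidly away from $y_\infty$, so a lower bound for $\|S_m(y_\infty)\|_{h^m}$ gives no lower bound for $\|S_m(x_{i_k})\|_{h^m}$, and it is a bound at $x_{i_k}$ itself (equivalently, at points arbitrarily close to the singular set) that is needed, since $\mathcal{F}_m(x,t)$, as defined in (\ref{58}), is the pointwise extremal quantity $\sup\{\|S(x)\|^{2}_{h^m_t}:\|S\|_{L^{2}}=1\}$. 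The codimension-two capacity estimate only produces cutoffs with small $\overline{\partial}_{B}$-energy vanishing near $\overline{\mathcal{S}}$; it does not let you relocate the evaluation point.

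What is missing is the analysis at the singular points themselves, which is the actual content of the partial $C^{0}$-estimate in \cite{ds}, \cite{t5} and its Sasaki version in \cite{cchlw}: one must construct almost-peak sections centered at $x_\infty$ using the tangent cone there—by Theorem \ref{T64} and Corollary \ref{C62} the singularities of $Z_\infty$ are klt quotient (orbifold) singularities, so the transverse model is $\mathbb{C}^{2}/\Gamma$ and one uses $\Gamma$-invariant Gaussian peak sections on the orbifold cone (and the explicit model $\mathbb{C}_\beta\times\mathbb{C}$ along $D_\infty$, as in \cite{cj}); these are transplanted to $M(t_{i_k})$ near $x_{i_k}$ using the smooth convergence on the regular part together with the cutoff argument, and the $\overline{\partial}_{B}$-equation is then solved on the smooth manifolds $M(t_{i_k})$—where $L^{m}$ is an honest basic line bundle and the bounds of Proposition \ref{UP1} supply the curvature positivity up to a small error—rather than on the incomplete regular part of $M_\infty$, where applying H\"ormander directly requires an additional completeness/density justification. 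This is also precisely why (\ref{d11}) is asserted only for a sequence of $m\to\infty$: invariant peak sections on the orbifold cone exist only for $m$ compatible with the local structure groups $\Gamma$. Your argument, if it worked as written, would yield the bound for all large $m$, which is a further indication that the singular-point case has been passed over rather than handled.
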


As a consequence of the first structure theorem for Sasakian manifolds and
Theorem \ref{T65}, the Gromov-Hausdorff limit $Z_{\infty}$ is a orbifold
variety embedded in some $\mathbb{CP}^{N}$ and the singular set $\overline
{\mathcal{S}}$ is a subvariety (\cite{ds}, \cite{tw1}). Then one can refine
the regularity of Theorem \ref{T64} as following:

\begin{corollary}
\label{C62} Let $(M,(1-\beta)D^{T},\xi,\eta_{0},g_{0})$ be a compact
transverse log Fano regular Sasakian manifold of dimension five and
$(Z_{0}=M/\mathcal{F}_{\xi},(1-\beta)D,h_{0},\omega_{h_{0}})$ denote the space
of leaves of the characteristic foliation which is a projective smooth
variety. Then under the conic Sasaki-Ricci flow, $(M_{\infty},(1-\beta)D^{T})$
is a $\mathbb{S}^{1}$-orbidbundle over $(Z_{\infty},(1-\beta)D)$ which is a
normal projective variety and the singular subvariety $\overline{\mathcal{S}}$
of $Z_{\infty}$ is a codimension two orbifold klt singularities.
\end{corollary}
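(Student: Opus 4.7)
The plan is to synthesize the structure results established earlier in the section and specialize them to the dimension-five setting. The proof is essentially an application of Theorem \ref{T61}, Theorem \ref{T64}, and Theorem \ref{T65}, combined with the Donaldson-Sun/Tian-Wang algebraic-geometric regularity of Gromov-Hausdorff limits equipped with a partial $C^{0}$-estimate.

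First, fix a sequence $t_{i}\to\infty$ along the conic Sasaki-Ricci flow. By the uniform $L^{4}$-bound of the transverse conic Ricci curvature from Theorem \ref{T31} together with Perelman's non-collapsing estimate in Proposition \ref{UP1}, the hypotheses of Theorem \ref{T61} hold with $p=4>n=2$. I would invoke Theorem \ref{T61} to extract a Cheeger-Gromov limit $(M_{\infty},(1-\beta)D_{\infty}^{T},\Phi_{\infty},d_{\infty})$ and the associated leaf-space limit $(Z_{\infty},(1-\beta)D_{\infty},J_{\infty},h_{\infty})$. This already provides the decomposition $Z_{\infty}=\mathcal{R}\cup\mathcal{S}$ with $\mathcal{S}=\overline{\mathcal{S}}\cup D_{\infty}$, identifies $M_{\infty}$ as an $\mathbb{S}^{1}$-orbibundle over $Z_{\infty}$, and delivers $(C_{B}^{\alpha}\cap L_{B}^{2,p})$-convergence on the regular part.

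Second, I would appeal to Theorem \ref{T64} to upgrade the convergence on $(M_{\infty})_{reg}$ to $C^{\infty}$-convergence via Perelman's pseudolocality theorem applied to the conic Sasaki-Ricci flow. This guarantees that $g_{\infty}^{T}$ is a smooth gradient conic Sasaki-Ricci soliton on the regular locus and that $\overline{\mathcal{S}}$ consists of codimension-two orbifold singularities of the leaf space. The klt property at those codimension-two singular points is intrinsic: they arise as cyclic quotient $\tfrac{1}{r}(1,a)$ surface singularities in the sense of the first structure theorem, whose log discrepancies lie in $(-1,\infty)$, so that $(Z_{\infty},(1-\beta)D_{\infty})$ is a normal log pair with klt singularities on $\overline{\mathcal{S}}$.

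Third, and this is the essential algebraic content, I would invoke Theorem \ref{T65}, which yields the partial $C^{0}$-estimate $\inf_{t_{i}}\inf_{x\in M}\mathcal{F}_{m}(x,t_{i})>0$ for some sequence $m\to\infty$. The orthonormal basic transverse holomorphic sections of $L^{m}$ produce $\mathbb{S}^{1}$-equivariant embeddings $\Psi_{i,m}:M\hookrightarrow\mathbb{CP}^{N_{m}}$ that descend along $\pi$ to projective embeddings of $Z_{0}$. Following the Donaldson-Sun theory as adapted by Tian-Wang to the conic setting, the uniform positive lower bound on $\mathcal{F}_{m}$ forces the Gromov-Hausdorff limit of $Z_{0}$ to coincide with the algebraic limit of the images $\Psi_{i,m}(Z_{0})\subset\mathbb{CP}^{N_{m}}$, so that $Z_{\infty}$ is realized as a normal projective variety and $\overline{\mathcal{S}}$ as a Zariski-closed subvariety of complex codimension at least two. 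Lifting this algebraic structure via the $\mathbb{S}^{1}$-orbibundle $M_{\infty}\to Z_{\infty}$ furnished by Proposition \ref{P21} finishes the statement.

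The main obstacle is this last step: verifying that the Bergman-type embeddings constructed from $H_{B}^{0}(M,L^{m})$ pass to the Gromov-Hausdorff limit and cut out a genuine normal projective variety with $\overline{\mathcal{S}}$ a subvariety. This rests on the basic H\"ormander $L^{2}$-estimate for $\overline{\partial}_{B}$ on a Sasakian manifold, the Sasaki analogue of the peak-section construction, and the uniform Sobolev inequality (\ref{1}) along the flow. The restriction to dimension five enters only through the regularity exponent $p>n$ available from the $L^{4}$-bound in Theorem \ref{T31}.
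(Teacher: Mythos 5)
Your proposal is correct and follows essentially the same route as the paper: the paper obtains Corollary \ref{C62} precisely by combining the structure and smooth-convergence results of Theorems \ref{T61} and \ref{T64} (whose hypotheses are verified by the $L^{4}$-bound of Theorem \ref{T31} and Perelman's estimates in Proposition \ref{UP1}) with the partial $C^{0}$-estimate of Theorem \ref{T65} and the Donaldson--Sun/Tian--Wang argument, yielding that $Z_{\infty}$ is a normal projective (orbifold) variety with $\overline{\mathcal{S}}$ a codimension-two klt subvariety, lifted to $M_{\infty}$ via the first structure theorem.
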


Finally, Theorem \ref{T66} follows easily from Theorem \ref{T61}, Theorem
\ref{T63}, and Corollary \ref{C62}.

\section{The Transverse Log K-Polystable}

\subsection{Log Sasaki-Donaldson-Futaki Invariant}

All transverse quantities on Sasakian manifolds such as log
Sasaki-Donaldson-Futaki invariant, log Sasaki-Mabuchi $K$-energy can be viewed
as their K\"{a}hler counterparts restricted on basic forms and transverse
K\"{a}hler structure. That is, all the integrands are only involved with the
transverse K\"{a}hler structure. Hence, under the twisted or conic
Sasaki-Ricci flow, the Reeb vector field and the transverse holomorphic
structure are both invariant, and the metrics are bundle-like. Furthermore,
when one applies integration by parts, the expressions involved behave
essentially the same as in the K\"{a}hler case.

We will adapt the notions as in \cite{d2}, \cite{li} and \cite{lz} in our
Sasakian setting. Recall that, for the Hamiltonian holomorphic vector field
$V,$ $d\pi_{\alpha}(V)$ is a holomorphic vector field on $V_{\alpha}$ and the
complex valued Hamiltonian function $u_{_{V}}:=\sqrt{-1}\eta(V)$ satisfies
\[%
\begin{array}
[c]{c}%
\overline{\partial}_{B}u_{_{V}}=-\sqrt{-1}i_{_{V}}(\frac{1}{2}d\eta).
\end{array}
\]

\begin{definition}
The ordinary Sasaki-Futaki invariant is defined by
\[%
\begin{array}
[c]{c}%
f_{M}(V):=-n\int_{M}u_{_{V}}(\mathrm{Ric}_{\omega}^{\mathrm{T}}-\omega
)\omega^{n-1}\wedge\eta
\end{array}
\]
or
\[%
\begin{array}
[c]{c}%
f_{M}(V):=-\int_{M}u_{_{V}}(\mathrm{R}_{\omega}^{\mathrm{T}}-n)\omega
^{n}\wedge\eta.
\end{array}
\]

\end{definition}

Let $(M,\eta,\left(  1-\beta\right)  D^{T},\widehat{\omega})$ be a regular
Sasakian $(2n+1)$-manifold with the transverse cone angle $2\pi\beta$ along
the divisor $D^{T}\thicksim-K_{M}^{T}.$ Then the scalar curvature of
$\widehat{\omega}$ on $M\backslash D^{T}$ is defined by
\[%
\begin{array}
[c]{c}%
\mathrm{R}^{\mathrm{T}}(\widehat{\omega})=\widehat{g}^{i\overline{j}}%
\widehat{R}_{i\overline{j}}^{T}=n\int_{M}\mathrm{Ric}^{\mathrm{T}}%
(\widehat{\omega})\wedge\widehat{\omega}^{n-1}\wedge\eta/\int_{M}%
\widehat{\omega}^{n}\wedge\eta
\end{array}
\]
with
\[%
\begin{array}
[c]{c}%
\widehat{\omega}^{n}\wedge\eta=\frac{\omega_{0}^{n}\wedge\eta}{||S^{T}%
||^{2(1-\beta)}}%
\end{array}
\]
for some smooth Sasaki metric $\omega_{0}$ in $c_{1}^{B}(M)$. Here $S^{T}$ is
the defining section of $D^{T}.$ Define a norm $||s^{T}||$ for any basic
section $s^{T}\in\Gamma(M,L^{T},h)$ given by
\[%
\begin{array}
[c]{c}%
||s^{T}||_{h}=\sqrt{h_{\alpha}}|s_{\alpha}^{T}|
\end{array}
\]
for the orthonormal CR-holomorphic basic section $\{s_{\alpha}^{T}\}$ in
$H_{B}^{0}(M,L^{T})$ on $U_{\alpha}$ and
\[%
\begin{array}
[c]{c}%
c_{1}^{B}(L^{T},h)=-\sqrt{-1}\partial_{B}\overline{\partial}_{B}\log
h_{\alpha}\text{\ }\mathrm{on}\text{ }U_{\alpha}.
\end{array}
\]
Hence by the CR Poinc\'{a}re-Lelong formula%
\[%
\begin{array}
[c]{c}%
\sqrt{-1}\partial_{B}\overline{\partial}_{B}\log||s^{T}||_{h}^{2}=-c_{1}%
^{B}(L^{T},h)+[Z_{s^{T}}]
\end{array}
\]
for any CR-holomorphic line bundle $(L^{T},h).$ In particular%
\[%
\begin{array}
[c]{c}%
\sqrt{-1}\partial_{B}\overline{\partial}_{B}\log||s^{T}||^{2}=-c_{1}%
^{B}([D^{T}])+|D^{T}|.
\end{array}
\]
Hence
\begin{equation}%
\begin{array}
[c]{ccl}%
\mathrm{Ric}^{\mathrm{T}}(\widehat{\omega}) & = & \mathrm{Ric}^{\mathrm{T}%
}(\omega_{0})+\sqrt{-1}\partial_{B}\overline{\partial}_{B}\log||S^{T}%
||^{2(1-\beta)}\\
& = & \mathrm{Ric}^{\mathrm{T}}(\omega_{0})-(1-\beta)c_{1}^{B}([D^{T}%
])+(1-\beta)|D^{T}|
\end{array}
\label{2022-3}%
\end{equation}
and the scalar curvature of $\widehat{\omega}$ on $M\backslash D^{T}$
\[%
\begin{array}
[c]{c}%
\mathrm{R}^{\mathrm{T}}(\widehat{\omega})=n\int_{M}[\mathrm{Ric}^{\mathrm{T}%
}(\omega_{0})-(1-\beta)c_{1}^{B}([D^{T}])]\wedge\widehat{\omega}^{n-1}%
\wedge\eta/\int_{M}\widehat{\omega}^{n}\wedge\eta.
\end{array}
\]
Set
\[%
\begin{array}
[c]{c}%
\kappa:=\int_{M}[c_{1}^{B}(M)-(1-\beta)c_{1}^{B}([D^{T}])]\wedge c_{1}%
^{B}(M)^{n-1}\wedge\eta/\int_{M}c_{1}^{B}(M)^{n}\wedge\eta
\end{array}
\]
which is only depends on basic cohomological classes. We define
\[%
\begin{array}
[c]{c}%
\mathrm{Vol}(M):=\frac{1}{n!}\int_{M}c_{1}^{B}(M)^{n}\wedge\eta\text{
\ }\mathrm{and}\text{ \ }\mathrm{Vol}(D^{T}):=\frac{1}{(n-1)!}\int_{D^{T}%
}c_{1}^{B}(M)^{n-1}\wedge\eta.
\end{array}
\]
Moreover, since $\mathrm{Vol}(D^{T})=n\mathrm{Vol}(M)$, it follows from
\cite{li} that if $\mathrm{R}^{T}(\widehat{\omega})$ is constant $n\kappa$:
\[%
\begin{array}
[c]{ccl}%
n\kappa & = & n\int_{M}[c_{1}^{B}(M)-(1-\beta)c_{1}^{B}([D^{T}])]\wedge
c_{1}^{B}(M)^{n-1}\wedge\eta/\int_{M}c_{1}^{B}(M)^{n}\wedge\eta\\
& = & \{n\int_{M}c_{1}^{B}(M)^{n}\wedge\eta-(1-\beta)n\int_{M}[c_{1}%
^{B}([D^{T}])\wedge c_{1}^{B}(M)^{n-1}\wedge\eta]\}/\int_{M}c_{1}^{B}%
(M)^{n}\wedge\eta\\
& = & n-\frac{(1-\beta)\mathrm{Vol}(D^{T})}{\mathrm{Vol}(M)}%
\end{array}
\]
and then
\[
\kappa=\beta\text{.}%
\]

\begin{definition}
If $\widehat{\omega}$ is a conic Sasaki metric with the transverse conic angle
$2\pi\beta$ along the divisor $D^{T}\thicksim-K_{M}^{T}$, one can define log
Sasaki-Futaki invariant%
\[%
\begin{array}
[c]{c}%
f_{M,(1-\beta)D^{T}}(V):=-n\int_{M}\widehat{u}_{_{V}}(\mathrm{Ric}%
_{\widehat{\omega}}^{\mathrm{T}}-\beta\widehat{\omega})\wedge\widehat{\omega
}^{n-1}\wedge\eta,
\end{array}
\]
or
\[%
\begin{array}
[c]{c}%
f_{M,(1-\beta)D^{T}}(V):=-\int_{M}\widehat{u}_{_{V}}(\mathrm{R}_{\widehat
{\omega}}^{\mathrm{T}}-n\beta)\wedge\widehat{\omega}^{n}\wedge\eta,
\end{array}
\]
or%
\[%
\begin{array}
[c]{c}%
f_{M,(1-\beta)D^{T}}(V):=\int_{M}V(u_{\widehat{\omega}})\widehat{\omega}%
^{n}\wedge\eta.
\end{array}
\]

\end{definition}

One can also calculate the ordinary Futaki invariant using the conic metric
$\widehat{\omega}$. Indeed, for choosing the smooth transverse K\"{a}hler
metric $\omega$, then it follows from (\ref{2022-3}) that we can recapture the
log Sasaki-Futaki invariant defined by Donaldson:

\begin{definition}
(\cite{d2}, \cite{li}) The log Sasaki-Donaldson-Futaki invariant is defined by%
\[%
\begin{array}
[c]{l}%
f_{M,(1-\beta)D^{T}}(V):=f_{M}(V)+(1-\beta)[\int_{D^{T}}u_{_{V}}\omega
^{n-1}\wedge\eta-n\int_{M}u_{_{V}}\omega^{n}\wedge\eta].
\end{array}
\]

\end{definition}

\subsection{Log Sasaki-Mabuchi $K$-energy}

For $d\mu_{\phi}=(d\eta_{\phi})^{n}\wedge\eta_{0}$ and $\eta_{\phi}=\eta
_{0}+d_{B}^{C}\phi$ with $\phi_{t}$ is any path with $\phi_{0}=c$ and
$\phi_{1}=\phi,$ we recall that
\[%
\begin{array}
[c]{c}%
I_{\eta_{0}}(\phi):=\frac{1}{V}\int_{M}\phi(d\mu_{0}-d\mu_{\phi})
\end{array}
\]
and
\[%
\begin{array}
[c]{c}%
J_{\eta_{0}}(\phi):=\frac{1}{V}\int_{0}^{1}\int_{M}\overset{\centerdot}{\phi
}_{t}(d\mu_{0}-d\mu_{\phi_{t}})dt.
\end{array}
\]

\begin{definition}
Let $u_{\omega_{0}}$ be the transverse twisted Ricci potential of $\omega_{0}%
$, that is%
\[%
\begin{array}
[c]{c}%
-\mathrm{Ric}^{\mathrm{T}}(\omega_{0})+\beta\omega_{0}+\theta=\sqrt
{-1}\partial_{B}\overline{\partial}_{B}u_{\omega_{0}}.
\end{array}
\]

\begin{enumerate}
\item We recall that the twisted Sasaki-Mabuchi $K$-energy $K_{\omega
_{0},\theta}(\phi)$ on a compact transverse log Fano Sasakian $(2n+1)$%
-manifold $(M,D^{T})$ is defined by%
\[%
\begin{array}
[c]{ccl}%
K_{\omega_{0},\theta}(\phi) & := & -\beta(I_{\eta_{0}}(\phi)-J_{\eta_{0}}%
(\phi))-\frac{1}{V}\int_{M}u_{\omega_{0}}(d\mu_{0}-d\mu_{\phi})\\
&  & +\frac{1}{V}\int_{M}\log\frac{\omega_{_{\phi}}^{n}\wedge\eta_{0}}%
{\omega_{0}^{n}\wedge\eta_{0}}d\mu_{\phi}.
\end{array}
\]

\item We also denote that $K_{\omega_{0},\theta}(\phi)=K_{\omega_{0},\theta
}(\omega_{1}).$ As in (\ref{2023-5}), we also denote that
\[%
\begin{array}
[c]{c}%
K_{\omega_{1},\theta}(\omega_{2}):=K_{\omega_{1},(1-\beta)D^{T}}(\omega_{2}).
\end{array}
\]

\end{enumerate}
\end{definition}

It follows easily from the definition that

\begin{lemma}
\label{L21}(\cite{lz}, \cite{li}, \cite{cj})

\begin{enumerate}
\item $K_{\omega_{0},(1-\beta)D^{T}}$ is independent of the path $\phi_{t}.$
Furthermore, it satisfies the cocycle condition%
\[%
\begin{array}
[c]{c}%
K_{\omega_{1},(1-\beta)D^{T}}(\omega_{2})+K_{\omega_{2},(1-\beta)D^{T}}%
(\omega_{3})=K_{\omega_{1},(1-\beta)D^{T}}(\omega_{3}).
\end{array}
\]

\item We also can integrate the log Sasaki-Futaki invariant to get the log
Sasaki-Mabuchi $K$-energy. That is, for $\theta=(1-\beta)D^{T}$ and
$\omega_{t}=\omega+\sqrt{-1}\partial_{B}\overline{\partial}_{B}\phi_{t}$%
\begin{equation}%
\begin{array}
[c]{ccl}%
\frac{\partial}{\partial t}K_{\eta_{0},(1-\beta)D^{T}}(\phi_{t}) & = &
-\int_{M}\overset{\centerdot}{\phi}_{t}[\mathrm{R}^{\mathrm{T}}(\omega
_{t})-n]\omega_{t}^{n}\wedge\eta\\
&  & +(1-\beta)\int_{D^{T}}\overset{\centerdot}{\phi}_{t}\omega_{t}%
^{n-1}\wedge\eta\\
&  & -n(1-\beta)\int_{M}\overset{\centerdot}{\phi}_{t}\omega_{t}^{n}\wedge
\eta\\
& = & -\int_{M}\overset{\centerdot}{\phi}_{t}(\mathrm{R}_{\phi_{t}%
}^{\mathrm{T}}-\beta n-tr_{\phi_{t}}\theta)\omega_{t}^{n}\wedge\eta.
\end{array}
\label{2023-5}%
\end{equation}

\item
\[%
\begin{array}
[c]{c}%
\frac{\partial}{\partial t}I_{\eta_{0}}(\phi_{t})=\frac{1}{V}\int_{M}%
\overset{\centerdot}{\phi}_{t}(d\mu_{0}-d\mu_{\phi_{t}})-\frac{1}{V}\int
_{M}\phi_{t}\Delta_{B}\overset{\centerdot}{\phi}_{t}d\mu_{\phi_{t}}.
\end{array}
\]

\item
\[%
\begin{array}
[c]{c}%
\frac{\partial}{\partial t}J_{\eta_{0}}(\phi_{t})=\frac{1}{V}\int_{M}%
\overset{\centerdot}{\phi}_{t}(d\mu_{0}-d\mu_{\phi_{t}}).
\end{array}
\]

\end{enumerate}
\end{lemma}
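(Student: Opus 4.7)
The plan is to establish parts (3) and (4) by direct differentiation under the integral sign, combine them to derive (2), and then obtain (1) as an immediate corollary. For (4), the stated identity is just the fundamental theorem of calculus applied to the defining integral of $J_{\eta_0}(\phi_t)$. For (3), I would differentiate $I_{\eta_0}(\phi_t)=\frac{1}{V}\int_M\phi_t(d\mu_0-d\mu_{\phi_t})$ under the integral, using the basic variation formula $\frac{\partial}{\partial t}d\mu_{\phi_t}=\Delta_B\dot\phi_t\,d\mu_{\phi_t}$, which is the standard trace identity $\mathrm{tr}_{\omega_{\phi_t}}(\sqrt{-1}\partial_B\overline{\partial}_B\dot\phi_t)=\Delta_B\dot\phi_t$ read off in foliation coordinates.

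For (2), I would differentiate each of the three summands in the definition of $K_{\omega_0,\theta}(\phi_t)$. Parts (3) and (4) combine to give $\frac{\partial}{\partial t}[-\beta(I_{\eta_0}-J_{\eta_0})]=\frac{\beta}{V}\int_M\phi_t\,\Delta_B\dot\phi_t\,d\mu_{\phi_t}$. The term $-\frac{1}{V}\int_M u_{\omega_0}(d\mu_0-d\mu_{\phi_t})$ contributes $\frac{1}{V}\int_M u_{\omega_0}\,\Delta_B\dot\phi_t\,d\mu_{\phi_t}$. Differentiating $\frac{1}{V}\int_M\log(\omega_{\phi_t}^n/\omega_0^n)\,d\mu_{\phi_t}$ produces two pieces; the piece from the logarithm is $\frac{1}{V}\int_M\Delta_B\dot\phi_t\,d\mu_{\phi_t}$, which vanishes by transverse integration by parts, leaving $\frac{1}{V}\int_M\log(\omega_{\phi_t}^n/\omega_0^n)\,\Delta_B\dot\phi_t\,d\mu_{\phi_t}$. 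Summing yields $\frac{\partial}{\partial t}K=\frac{1}{V}\int_M[u_{\omega_0}+\beta\phi_t+\log(\omega_{\phi_t}^n/\omega_0^n)]\Delta_B\dot\phi_t\,d\mu_{\phi_t}$. The key identification, obtained by applying $\sqrt{-1}\partial_B\overline{\partial}_B$ to the bracketed quantity and using both the defining equation for $u_{\omega_0}$ and the Poincar\'e--Lelong-type identity $\sqrt{-1}\partial_B\overline{\partial}_B\log(\omega_{\phi_t}^n/\omega_0^n)=-\mathrm{Ric}^{\mathrm{T}}(\omega_{\phi_t})+\mathrm{Ric}^{\mathrm{T}}(\omega_0)$, is that the bracket equals the twisted transverse Ricci potential $u_{\omega_{\phi_t}}$ of $\omega_{\phi_t}$ up to a $t$-dependent constant; since $\int_M\Delta_B\dot\phi_t\,d\mu_{\phi_t}=0$, the constant drops out, and one further integration by parts yields $\frac{\partial}{\partial t}K=-\frac{1}{V}\int_M\dot\phi_t(\mathrm{R}^{\mathrm{T}}_{\phi_t}-\beta n-\mathrm{tr}_{\phi_t}\theta)\,d\mu_{\phi_t}$. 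The equivalent first expression in (2) then follows by splitting $\mathrm{R}^{\mathrm{T}}_{\phi_t}-\beta n=(\mathrm{R}^{\mathrm{T}}_{\phi_t}-n)+n(1-\beta)$ and converting $\mathrm{tr}_{\phi_t}\theta\cdot\omega_{\phi_t}^n$ into an integral along $D^T$ via the Poincar\'e--Lelong formula with $\theta=(1-\beta)[D^T]$.

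Part (1) is then immediate: the derivative formula just established expresses $\partial_t K\,dt$ as the pullback along $\phi_t$ of the 1-form $\psi\mapsto-\frac{1}{V}\int_M\psi(\mathrm{R}^{\mathrm{T}}_\omega-\beta n-\mathrm{tr}_\omega\theta)\,\omega^n\wedge\eta$ on the space of transverse K\"{a}hler potentials, which is manifestly closed by the standard mixed-partial (or two-parameter homotopy) argument; hence $K_{\omega_0,\theta}(\phi)$ depends only on the endpoint $\omega_\phi$, and the cocycle identity $K_{\omega_1,\theta}(\omega_2)+K_{\omega_2,\theta}(\omega_3)=K_{\omega_1,\theta}(\omega_3)$ follows by concatenating paths. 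The principal technical obstacle is that $\theta=(1-\beta)[D^T]$ is a positive current rather than a smooth form, so the above computations must first be carried out for the smooth regularization $\theta_\varepsilon$ of (\ref{2023-4})---where all the classical identities apply directly---and one then passes to the limit $\varepsilon\to 0$ using the convergence framework of Liu-Zhang \cite{lz} and Collins-Jacob \cite{cj}.
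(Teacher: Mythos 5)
Your proposal is correct and is exactly the standard computation the paper relies on: the paper gives no proof of Lemma \ref{L21} beyond citing \cite{lz}, \cite{li}, \cite{cj} and the remark that the transverse quantities behave as their K\"ahler counterparts on basic forms, and your differentiation of $I_{\eta_0}$, $J_{\eta_0}$ and the three terms of $K_{\omega_0,\theta}$, the identification of the bracket with the twisted transverse Ricci potential up to a constant, and the closedness/path-concatenation argument for the cocycle property reproduce that intended argument. Your closing caveat about first working with the smooth regularization $\theta_\varepsilon$ of (\ref{2023-4}) and passing to the limit is likewise consistent with how the paper treats the conic case via the twisted flow.
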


Now by definition and Lemma \ref{L21} with the straightforward computation as
same as in \cite{cj}, we have

\begin{proposition}
\label{P41} Under the twisted Sasaki-Ricci flow (\ref{2023-1A}), for
$\phi_{\varepsilon}(t)=\varphi_{\varepsilon}(t)+k\chi_{\epsilon}(\epsilon
^{2}+\left\Vert S\right\Vert _{h}^{2})$

\begin{enumerate}
\item
\[%
\begin{array}
[c]{c}%
\frac{d}{dt}K_{\eta_{0},\theta_{\varepsilon}}(\phi_{\varepsilon}(t))=-\frac
{1}{V}\int_{M}|\nabla^{T}u_{\varepsilon}(t)|_{g_{\varepsilon}(t)}^{2}%
d\mu_{\phi_{\varepsilon}},
\end{array}
\]

\item
\begin{equation}%
\begin{array}
[c]{c}%
\frac{1}{nV}\int_{M}\phi_{\varepsilon}(t)d\mu_{\phi_{\varepsilon t}}-C\leq
J_{\eta_{0}}(\phi_{\varepsilon}(t))\leq\frac{1}{V}\int_{M}\phi_{\varepsilon
}(t)d\mu_{0}+C,
\end{array}
\label{5}%
\end{equation}

\item
\begin{equation}%
\begin{array}
[c]{c}%
\frac{1}{V}\int_{M}\phi_{\varepsilon}(t)d\mu_{0}\leq\frac{n}{V}\int_{M}%
(-\phi_{\varepsilon}(t))d\mu_{\phi_{\varepsilon t}}-(n+1)K_{\eta_{0}%
,\theta_{\varepsilon}}(\phi_{\varepsilon}(t))+C.
\end{array}
\label{6}%
\end{equation}

\end{enumerate}
\end{proposition}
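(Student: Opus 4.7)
The plan is to treat the three parts separately, each reducing to a Sasakian analog of a standard K\"ahler identity transferred through the transverse K\"ahler structure on the leaf space, with careful tracking of the conic regularization $\theta_{\varepsilon}$.

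For Part (1), I would combine the variational formula for the twisted Sasaki-Mabuchi $K$-energy recorded in Lemma \ref{L21}(2),
\[
\tfrac{d}{dt}K_{\eta_{0},\theta_{\varepsilon}}(\phi_{\varepsilon}(t))=-\tfrac{1}{V}\int_{M}\dot{\phi}_{\varepsilon}(\mathrm{R}^{\mathrm{T}}(\omega_{\varepsilon})-\beta n-tr_{\omega_{\varepsilon}}\theta_{\varepsilon})\,\omega_{\varepsilon}^{n}\wedge\eta,
\]
with the parabolic Monge-Amp\`ere equation (\ref{2023-1A}), which together with (\ref{U1}) forces $\dot{\phi}_{\varepsilon}(t)=u_{\varepsilon}(t)+c(t)$ for a purely time-dependent constant $c(t)$. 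Taking the transverse trace of the twisted Ricci-potential equation yields $\mathrm{R}^{\mathrm{T}}(\omega_{\varepsilon})-\beta n-tr_{\omega_{\varepsilon}}\theta_{\varepsilon}=-\Delta_{B}u_{\varepsilon}$. Substituting into the variational formula, the constant $c(t)$ drops out after integrating against $\Delta_{B}u_{\varepsilon}$, and one more integration by parts against $u_{\varepsilon}\Delta_{B}u_{\varepsilon}$ produces exactly $-V^{-1}\int_{M}|\nabla^{T}u_{\varepsilon}|_{g_{\varepsilon}(t)}^{2}d\mu_{\phi_{\varepsilon}}$.

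For Part (2), I would invoke the transverse Aubin-Yau inequalities $\frac{1}{n+1}I_{\eta_{0}}\leq J_{\eta_{0}}\leq \frac{n}{n+1}I_{\eta_{0}}$, which descend from their K\"ahler counterparts on the leaf space because $\phi_{\varepsilon}$ is basic and every integrand is transverse. Writing $I_{\eta_{0}}(\phi_{\varepsilon})=V^{-1}\int_{M}\phi_{\varepsilon}(d\mu_{0}-d\mu_{\phi_{\varepsilon}})$, the upper Aubin bound combined with the uniform $C_{B}^{0}$-bound on $\dot{\phi}_{\varepsilon}$ from Corollary \ref{C31} (which via the Monge-Amp\`ere equation controls the $L^{1}$-defect between $\int\phi_{\varepsilon}d\mu_{0}$ and $\int\phi_{\varepsilon}d\mu_{\phi_{\varepsilon}}$ up to a constant) yields $J_{\eta_{0}}(\phi_{\varepsilon})\leq V^{-1}\int_{M}\phi_{\varepsilon}\,d\mu_{0}+C$; the symmetric lower Aubin bound $J\geq I/(n+1)$ together with the same $L^{1}$-control produces $\frac{1}{nV}\int_{M}\phi_{\varepsilon}\,d\mu_{\phi_{\varepsilon}}-C\leq J_{\eta_{0}}(\phi_{\varepsilon})$.

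For Part (3), the strategy is to start from the definition of $K_{\eta_{0},\theta_{\varepsilon}}$ and apply Jensen's inequality to the log density term: applying Jensen to $-\log$ against the probability density $\omega_{0}^{n}/\omega_{\phi_{\varepsilon}}^{n}$ with respect to the probability measure $V^{-1}d\mu_{\phi_{\varepsilon}}$, and using $\int d\mu_{0}=V$, gives $V^{-1}\int_{M}\log(\omega_{\phi_{\varepsilon}}^{n}/\omega_{0}^{n})\,d\mu_{\phi_{\varepsilon}}\geq 0$. Combined with the uniform $C_{B}^{0}$-bound on $u_{\omega_{0}}$, this yields $K_{\eta_{0},\theta_{\varepsilon}}(\phi_{\varepsilon})\geq -\beta(I_{\eta_{0}}-J_{\eta_{0}})(\phi_{\varepsilon})-C$. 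Rewriting $I-J$ in terms of the two mean values $V^{-1}\int\phi_{\varepsilon}d\mu_{0}$ and $V^{-1}\int\phi_{\varepsilon}d\mu_{\phi_{\varepsilon}}$ using the Aubin inequalities of Part (2), and solving the resulting linear inequality for $V^{-1}\int\phi_{\varepsilon}d\mu_{0}$, produces exactly the stated bound.

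The main obstacle is ensuring that all these K\"ahler-style identities transfer \emph{uniformly in $\varepsilon$} in the presence of the transverse conic divisor $(1-\beta)D^{T}$ and the smoothing $\theta_{\varepsilon}$ defined in (\ref{2023-4}); in particular, boundary terms produced by integration by parts against $\theta_{\varepsilon}$ must be controlled by the $L^{4}$-integral bound from Theorem \ref{T31} rather than by a pointwise bound, and the Sobolev/non-collapsing estimates of Proposition \ref{UP1} together with Corollary \ref{C31} are needed to ensure the $\varepsilon$-dependent constants remain bounded as $\varepsilon\to 0$.
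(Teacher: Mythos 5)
Your part (1) is correct and is essentially the computation the paper delegates to Lemma \ref{L21} and \cite{cj}: tracing (\ref{U1}) gives $\mathrm{R}^{\mathrm{T}}-tr_{\omega^{\epsilon}}\theta_{\epsilon}-\beta n=-\Delta_{B}u_{\epsilon}$, the flow gives $\dot{\phi}_{\varepsilon}=u_{\varepsilon}+c(t)$, the constant integrates away against $\Delta_{B}u_{\varepsilon}$, and one integration by parts yields the claim. The routes you propose for (\ref{5}) and (\ref{6}), however, do not close. For (\ref{5}): the right-hand inequality $J_{\eta_{0}}(\phi)\leq\frac{1}{V}\int_{M}\phi\,d\mu_{0}+C$ is false as an abstract functional inequality (take $\phi\equiv-A$ with $A\gg1$: the left side is $0$, the right side is $-A+C$), so it cannot follow from the Aubin bounds $\frac{1}{n+1}I\leq J\leq\frac{n}{n+1}I$ together with any pointwise bound; it is a statement about the flow. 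Your bridging claim, that the $C_{B}^{0}$-bound on $\dot{\phi}_{\varepsilon}$ ``controls the $L^{1}$-defect between $\int\phi_{\varepsilon}d\mu_{0}$ and $\int\phi_{\varepsilon}d\mu_{\phi_{\varepsilon}}$ up to a constant,'' is precisely the assertion $I_{\eta_{0}}(\phi_{\varepsilon}(t))\leq C$, which is not available --- the possible divergence of $I$ and $J$ is the whole point of the properness hypothesis in Proposition \ref{P42} --- and a bound on $\dot{\phi}_{\varepsilon}$ only controls time derivatives of these functionals, not the functionals themselves. (Also, that $C^{0}$-bound is not Corollary \ref{C31}, which is the Sobolev inequality; it comes from the Perelman estimates of Proposition \ref{UP1}.) The missing input is the Monge--Amp\`ere equation itself: writing (\ref{2023-1A}) as $\beta\phi_{\varepsilon}=\dot{\varphi}_{\varepsilon}-\log\frac{\omega_{\phi_{\varepsilon}}^{n}\wedge\eta_{0}}{\omega_{\varepsilon}^{n}\wedge\eta_{0}}-F_{\varepsilon}$ and integrating against $d\mu_{\phi_{\varepsilon}}$, Jensen plus $\Vert\dot{\varphi}_{\varepsilon}\Vert_{C^{0}}+\Vert F_{\varepsilon}\Vert_{C^{0}}\leq C$ give $\frac{1}{V}\int\phi_{\varepsilon}\,d\mu_{\phi_{\varepsilon}}\leq C$, which settles the left half of (\ref{5}) since $J\geq0$; the right half needs in addition a monotone quantity along the flow (a Ding-type functional, as in the computation of \cite{cj} and \cite{lz} that the paper is quoting), not Aubin alone.

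For (\ref{6}) your Jensen step points the wrong way. Jensen bounds the entropy term from below, $\frac{1}{V}\int\log\frac{\omega_{\phi}^{n}}{\omega_{0}^{n}}d\mu_{\phi}\geq0$, hence bounds $K_{\eta_{0},\theta_{\varepsilon}}$ from below; combining $K\geq-\beta(I-J)-C$ with $I-J\leq\frac{n}{n+1}I$ and solving gives $\frac{1}{V}\int\phi\,d\mu_{\phi}\leq\frac{1}{V}\int\phi\,d\mu_{0}+\frac{n+1}{n\beta}K+C$, i.e.\ a lower bound on $\frac{1}{V}\int\phi\,d\mu_{0}$, not the stated upper bound. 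Since (\ref{6}) is equivalent to $(n+1)K_{\eta_{0},\theta_{\varepsilon}}(\phi)+\frac{1}{V}\int\phi\,d\mu_{0}+\frac{n}{V}\int\phi\,d\mu_{\phi}\leq C$, what is required is an upper bound on the entropy term, and that again comes only from the flow: substitute $\log\frac{\omega_{\phi_{\varepsilon}}^{n}\wedge\eta_{0}}{\omega_{\varepsilon}^{n}\wedge\eta_{0}}=\dot{\varphi}_{\varepsilon}-F_{\varepsilon}-\beta\phi_{\varepsilon}$ into the definition of $K_{\eta_{0},\theta_{\varepsilon}}$, use Perelman's bounds, and then apply $I-J\geq\frac{1}{n+1}I$. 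Finally, the ``uniform $C^{0}$-bound on $u_{\omega_{0}}$'' you invoke does not exist uniformly in $\varepsilon$: the twisted Ricci potential of $\omega_{0}$ relative to $\theta_{\varepsilon}$ contains $(1-\beta)\log(\Vert S\Vert_{h}^{2}+\varepsilon^{2})$, which degenerates along $D^{T}$ as $\varepsilon\to0$; its singular part must be cancelled against the matching term in $F_{\varepsilon}$ (this is exactly the bookkeeping carried out in \cite{lz}), rather than estimated pointwise, and the $L^{4}$-bound of Theorem \ref{T31} is not the relevant tool at this step.
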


\subsection{Transverse Log K-Polystable}

By the CR Kodaira embedding theorem for transverse log Fano quasi-regular
Sasakian manifolds, there exists an embedding
\[
\Psi:M\rightarrow(\mathbb{CP}^{N},\omega_{FS})
\]
defined by the basic transverse holomorphic section $\{s_{0},s_{1}%
,\cdots,s_{N}\}$ of $H_{B}^{0}(M,(L^{T})^{m})$ which is $\mathbb{S}^{1}%
$-equivariant with respect to the weighted $\mathbb{C}^{\ast}$-action in
$\mathbb{C}^{N+1}$ with $N=\dim H_{B}^{0}(M,(L^{T})^{m})-1$ for a large
positive integer $m.$ Here%
\[
L^{T}:=(K_{M}^{T})^{-1}-(1-\beta)D^{T}.
\]
Since $Z$ is also log Fano, there is an embedding
\[
\psi_{|mL_{\emph{Z}}|}:Z\rightarrow\mathcal{P(}H^{0}(Z,(L_{\emph{Z}})^{m})).
\]

Define
\[
\Psi_{|m(L^{T})|}=\psi_{|mL_{\emph{Z}}|}\circ\pi
\]
such that
\[
\Psi_{|m(L^{T})|}:M\rightarrow\mathcal{P(}H_{B}^{0}(M,(L^{T})^{m}).
\]
We define
\[
\mathrm{Diff}^{\mathrm{T}}(M)=\{\sigma\in\mathrm{Diff}(M)\text{\ }|\text{
}\sigma_{\ast}\xi=\xi\text{\ \textrm{and} }\sigma^{\ast}g^{T}=(\sigma^{\ast
}g)^{T}\}
\]
and
\[
SL^{T}(N+1,\mathbb{C})=SL(N+1,\mathbb{C})\cap\mathrm{Diff}^{\mathrm{T}}(M).
\]

Any other basis of $H_{B}^{0}(M,(L^{T})^{m})$ gives an embedding of the form
$\sigma^{T}\circ\Psi_{|m(L^{T})|}$ with $\sigma^{T}\in SL^{T}(N+1,\mathbb{C}%
).$ Now for any subgroup of the weighted $\mathbb{C}^{\ast}$-action
$G_{0}=\{\sigma^{T}(t)\}_{t\in\mathbb{C}^{\ast}}$ of $SL^{T}(N+1,\mathbb{C}),$
there is a unique limiting%
\[%
\begin{array}
[c]{c}%
M_{\infty}=\lim_{t\rightarrow0}\sigma^{T}(t)(M)\subset\mathbb{CP}^{N}%
\end{array}
\]
and
\[%
\begin{array}
[c]{c}%
D_{\infty}^{T}=\lim_{t\rightarrow0}\sigma^{T}(t)(D^{T}).
\end{array}
\]

If $M_{\infty}$ has its leave space $Z_{\infty}=M_{\infty}/\mathcal{F}_{\xi}$
which is a log Fano projective K\"{a}hler orbifold with at worst codimension
two orbifold singularities $\overline{\mathcal{S}}.$ then for an integer $l>0$
sufficiently large such that $(L_{\infty}^{T})^{l}$ is very-ample with the
$\mathbb{S}^{1}$-equivariant embedding with respective the weighted
$\mathbb{C}^{\ast}$-action in $\mathbb{C}^{N+1}$
\[
i:M_{\infty}\rightarrow(\mathbb{CP}^{N},\omega_{FS})
\]
with the Bergman metric $\overline{\omega}_{\infty}=\frac{1}{l}i^{\ast}%
(\omega_{FS}).$

Let $V$ be a Hamiltonian holomorphic vector field whose real part generates
the action by $\sigma^{T}(e^{-s})$. As the previous discussion $(M_{\infty
},(1-\beta)D_{\infty}^{T})$ is transverse log Fano, there is a log
Sasaki-Futaki invariant defined by%
\[%
\begin{array}
[c]{l}%
f_{M_{\infty},(1-\beta)D_{\infty}^{T}}(V):=f_{M_{\infty}}(V)+(1-\beta
)[\int_{D_{\infty}^{T}}u_{_{V}}\overline{\omega}_{\infty}^{n-1}\wedge\eta
_{0}-n\int_{M_{\infty}}u_{_{V}}\overline{\omega}_{\infty}^{n}\wedge\eta_{0}].
\end{array}
\]
Thus one can introduce the Sasaki analogue of the log $K$-polystable on
K\"{a}hler manifolds. In particular, if $M$ admits an Sasaki-Einstein, then
$\operatorname{Re}f_{M_{\infty},(1-\beta)D_{\infty}^{T}}(V)$ is nonnegative.

\begin{definition}
\label{d61}Let $(M,\xi,\eta,g,\omega,(1-\beta)D^{T})$ be a compact transverse
log Fano regular Sasakian manifold and its leave space $(Z,h,\omega
_{h},(1-\beta)D)$ be a log Fano smooth projective variety. We say that $M$ is
transverse log $K$-polystable with respect to $(L^{T})^{m}$ if the log
Sasaki-Donaldson-Futaki invariant
\[%
\begin{array}
[c]{c}%
\operatorname{Re}f_{M_{\infty},(1-\beta)D_{\infty}^{T}}(V)\geq
0\text{\ \ \textrm{or} \ }\operatorname{Re}f_{Z_{\infty},(1-\beta)D_{\infty}%
}(X)\geq0
\end{array}
\]
for any weighted $\mathbb{C}^{\ast}$-action $G_{0}=\{\sigma^{T}(t)\}_{t\in
\mathbb{C}^{\ast}}$ of $SL^{T}(N+1,\mathbb{C})$ with a log Fano normal variety
$Z_{\infty}=M_{\infty}/\mathcal{F}_{\xi}$ and the equality holds if and only
if $M_{\infty}$ is transverse biholomorphic to $M.$ We say that $M$ is
transverse log $K$-polystable if it is transverse log $K$-polystable for all
large positive integer $m$.
\end{definition}

With Definition \ref{d61} in mind, we are ready to show that the log
Sasaki-Marbuchi $K$-energy is bounded from below under the conic Sasaki-Ricci flow.

\begin{theorem}
\label{t2022} Let $(M,\xi,\eta_{0},g_{0},(1-\beta)D^{T})$ be a compact regular
log transverse Fano Sasakian manifold of dimension five and $(Z_{0}%
=M/\mathcal{F}_{\xi},h_{0},\omega_{h_{0}},(1-\beta)D)$ denote the space of
leaves of the characteristic foliation which is a log Fano projective
K\"{a}hler manifold. If $(M,(1-\beta)D^{T},L^{T})$ is transverse log
$K$-polystable, then the log Sasaki-Marbuchi $K$-energy is bounded from below
under the conic Sasaki-Ricci flow%
\begin{equation}%
\begin{array}
[c]{c}%
K_{\omega_{0},(1-\beta)D^{T}}(\omega_{t})\geq-C(g_{0}).
\end{array}
\label{54-b}%
\end{equation}

\end{theorem}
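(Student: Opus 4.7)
The plan is to combine the monotonicity of the log Sasaki-Mabuchi $K$-energy along the conic Sasaki-Ricci flow with the Cheeger-Gromov convergence to a conic Sasaki-Ricci soliton on a limit $M_\infty$ and the transverse log $K$-polystability hypothesis, following the parabolic strategy of Liu-Zhang \cite{lz} and Tian-Wang \cite{tw1}. By Proposition \ref{P41}(1),
\begin{equation*}
\frac{d}{dt}\,K_{\omega_0,(1-\beta)D^T}(\omega_t) \;=\; -\frac{1}{V}\int_M |\nabla^T u_\varepsilon(t)|^2_{g_\varepsilon(t)}\, d\mu_{\varphi_\varepsilon} \;\leq\; 0,
\end{equation*}
so $t\mapsto K(\omega_t)$ is non-increasing. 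It therefore suffices to bound $\lim_{t\to\infty} K(\omega_t)$ from below, which I would argue by contradiction: assume this limit equals $-\infty$.

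Under this assumption, the convergence results (Theorem \ref{T66}, Theorem \ref{T63}, Corollary \ref{C62}) produce a subsequence $t_i\to\infty$ along which $(M,g^T(t_i))$ converges in the Cheeger-Gromov sense to a gradient conic Sasaki-Ricci soliton $(M_\infty,(1-\beta)D^T_\infty,g^T_\infty,u_\infty)$ satisfying
\begin{equation*}
\mathrm{Ric}^{\mathrm{T}}(g^T_\infty) - (1-\beta)[D^T_\infty] + \mathrm{Hess}^T(u_\infty) \;=\; \beta\, g^T_\infty, \qquad \nabla^T\nabla^T u_\infty \;=\; 0,
\end{equation*}
on $(M_\infty)_{\mathrm{reg}}$, so that $V:=\nabla^T u_\infty$ is a Hamiltonian holomorphic vector field. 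The partial $C^0$-estimate (Theorem \ref{T65}), together with the $\mathbb{S}^1$-equivariant basic Kodaira embedding $\Psi_{|mL^T|}:M\hookrightarrow\mathbb{CP}^{N_m}$, promotes this metric convergence to convergence inside the Hilbert scheme, producing a one-parameter subgroup $\sigma(s)\in SL^T(N_m+1,\mathbb{C})$ whose infinitesimal generator is (up to scaling) $V$ and for which $\lim_{s\to 0}\sigma(s)\cdot\Psi(M) = \Psi_\infty(M_\infty)$. This is precisely the transverse log test configuration of Definition \ref{d61}.

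The remaining step is to identify the asymptotic behaviour of $K(\omega_t)$ with the log Sasaki-Donaldson-Futaki invariant of this test configuration. Integrating Lemma \ref{L21}(2) along $\sigma(s)$ and using the soliton relation $\nabla^T\nabla^T u_\infty=0$ (which makes $V$ a genuine Hamiltonian holomorphic generator) will yield a Sasaki analogue of Paul-Tian's asymptotic slope formula, schematically
\begin{equation*}
\limsup_{t\to\infty}\frac{K_{\omega_0,(1-\beta)D^T}(\omega_t)}{t} \;\leq\; -\operatorname{Re} f_{M_\infty,(1-\beta)D^T_\infty}(V).
\end{equation*}
Transverse log $K$-polystability then forces $\operatorname{Re} f_{M_\infty,(1-\beta)D^T_\infty}(V)\geq 0$, with equality only if $M_\infty$ is transverse biholomorphic to $M$. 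Reconciling this with the hypothesis $K(\omega_t)\to-\infty$ and the equality case of Definition \ref{d61} rules out the non-trivial soliton scenario and delivers the uniform lower bound $K(\omega_t)\geq -C(g_0)$.

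The principal obstacle is exactly this identification of the asymptotic Mabuchi slope with the log Sasaki-Donaldson-Futaki invariant in the singular setting. Three sources of singularities must be controlled simultaneously: the conic degeneration of $\omega_t$ along $D^T$; the at-worst codimension-two klt orbifold singularities $\overline{\mathcal{S}}\subset Z_\infty$ provided by Corollary \ref{C62}; and the basic, foliated nature of every integrand appearing in the definition of $K$. Justifying integration by parts and the passage to the limit for the transverse curvature integrands through these singular strata will rely on the $L^4$-bound on the transverse conic Ricci curvature (Theorem \ref{T31}), the $(C_B^\alpha\cap L_B^{2,p})$-convergence on the regular part (Theorem \ref{T61}), and the fact that $\overline{\mathcal{S}}$ has real codimension at least four so that boundary contributions from the singular set vanish in the limit.
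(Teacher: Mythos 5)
Your reduction via Proposition \ref{P41} (monotonicity of $K_{\omega_0,(1-\beta)D^T}$ along the flow) matches the paper, but after that the argument has a genuine gap at its central step, and you acknowledge as much: the ``Sasaki analogue of Paul--Tian's asymptotic slope formula'' relating $\limsup_{t\to\infty}K(\omega_t)/t$ to $-\operatorname{Re}f_{M_\infty,(1-\beta)D_\infty^T}(V)$ is never established, and neither is the existence of the one-parameter subgroup $\sigma(s)\subset SL^T(N_m+1,\mathbb{C})$ degenerating $\Psi(M)$ to $\Psi_\infty(M_\infty)$ with infinitesimal generator $V=\nabla^T u_\infty$. The latter does not follow from the partial $C^0$-estimate plus Cheeger--Gromov convergence alone; realizing the soliton limit as the central fibre of a single $\mathbb{C}^*$-degeneration inside the orbit closure is the hard Chen--Sun--Wang-type step, which this paper does not carry out and which you cannot simply invoke. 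Moreover, even granting both ingredients, the contradiction is not tight: Perelman's estimate only gives $\frac{d}{dt}K\geq -C$, so $K(\omega_t)$ could diverge to $-\infty$ sublinearly, in which case your slope inequality yields $\operatorname{Re}f_{M_\infty,(1-\beta)D_\infty^T}(V)\leq 0$, hence $=0$ by polystability, hence $M_\infty$ transverse biholomorphic to $M$; to conclude you would then still need a separate argument (a conic Sasaki analogue of Bando--Mabuchi/Berndtsson: existence of a conic Sasaki--Einstein metric implies $K$ is bounded below) to contradict $K\to-\infty$, and your closing ``reconciling'' sentence does not supply it.

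For comparison, the paper's proof is more direct and avoids the limit soliton entirely. After the monotonicity reduction, it uses the partial $C^0$-estimate (\ref{d11}) to produce Bergman metrics $\overline{\omega}_i=\frac{1}{m}\Psi_i^*(\omega_{FS})$ with $\Psi_i=G_i\circ\Psi_1$, $G_i\in SL^T(N_m+1,\mathbb{C})$, so all $\overline{\omega}_i$ lie in a single group orbit; transverse log $K$-polystability enters only through a theorem of S.~Paul giving $K_{\overline{\omega}_1,(1-\beta)D^T}(\overline{\omega}_i)\geq-C$ on that orbit. The cocycle identity of Lemma \ref{L21} then reduces everything to the upper bound $K_{\omega_i,(1-\beta)D^T}(\overline{\omega}_i)\leq C$, which is proved from the explicit expression (\ref{57a}) of the log Sasaki--Mabuchi energy, Perelman's bound $|H_{\omega_i}|\leq C$ on the conic Ricci potential, and the estimate $\overline{\omega}_i\leq C\omega_i$ coming from the partial $C^0$-estimate and the gradient estimate. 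If you want to salvage your route, you would have to prove the degeneration and slope formula in the foliated conic setting and add the Bando--Mabuchi-type step; as written, the proposal does not constitute a proof.
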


\begin{proof}
Firstly, it follows from Proposition \ref{P41} that $K_{\omega_{0}%
,(1-\beta)D^{T}}(\omega_{t})$ is non-increasing in $t$. So it suffices to show
a uniform lower bound of
\begin{equation}%
\begin{array}
[c]{c}%
K_{\omega_{0},(1-\beta)D^{T}}(\omega_{t_{i}})\geq-C.
\end{array}
\label{56}%
\end{equation}
Now if $(M,(1-\beta)D^{T},L^{T})$ is transverse log $K$-polystable, we fix an
integer $m>0$ sufficiently large such that $(L^{T})^{m}$ is very-ample. We can
define the $\mathbb{S}^{1}$-equivariant embedding $\Psi_{i}:M\rightarrow
(\mathbb{CP}^{N_{m}},\omega_{FS})$ with respective to the weighted
$\mathbb{C}^{\ast}$-action in $\mathbb{C}^{N_{m}+1}$ with $\overline{\omega
}_{i}=\frac{1}{m}\Psi_{i}^{\ast}(\omega_{FS})$ so that for any $i\geq1,$ there
exists a $G_{i}\in SL^{T}(N_{m}+1,\mathbb{C})$ such that $\Psi_{i}=G_{i}%
\circ\Psi_{1}.$ It follows from a result by S. Paul that%
\begin{equation}%
\begin{array}
[c]{c}%
K_{\overline{\omega}_{1},(1-\beta)D^{T}}(\overline{\omega}_{i})\geq-C.
\end{array}
\label{566}%
\end{equation}

By the cocycle condition of the log Sasaki-Mabuchi $K$-energy,%
\[%
\begin{array}
[c]{l}%
K_{\omega_{0},(1-\beta)D^{T}}(\omega_{i})+K_{\omega_{i},(1-\beta)D^{T}%
}(\overline{\omega}_{i})\\
=K_{\omega_{0},(1-\beta)D^{T}}(\overline{\omega}_{i})\\
=K_{\omega_{0},(1-\beta)D^{T}}(\overline{\omega}_{1})+K_{\overline{\omega}%
_{1},(1-\beta)D^{T}}(\overline{\omega}_{i})
\end{array}
\]
and then
\[%
\begin{array}
[c]{c}%
K_{\omega_{0},(1-\beta)D^{T}}(\omega_{i})+K_{\omega_{i},(1-\beta)D^{T}%
}(\overline{\omega}_{i})\geq-C.
\end{array}
\]
Hence, to show (\ref{56}), we only need to get an upper bound for
\begin{equation}%
\begin{array}
[c]{c}%
K_{\omega_{i},(1-\beta)D^{T}}(\overline{\omega}_{i})\leq C.
\end{array}
\label{57}%
\end{equation}
Let $H_{\omega_{0}}$ be the conic Ricci potential of $\omega_{0}$, that is,
$-\mathrm{Ric}^{\mathrm{T}}(\omega_{0})+\beta\omega_{0}+(1-\beta)D^{T}%
=\sqrt{-1}\partial_{B}\overline{\partial}_{B}H_{\omega_{0}}.$ We recall that
the log Sasaki-Mabuchi $K$-energy $K_{\omega_{0},(1-\beta)D^{T}}(\phi)$ on a
compact transverse log Fano Sasakian $(2n+1)$-manifold $(M,(1-\beta)D^{T})$ is
defined by%
\[%
\begin{array}
[c]{ccl}%
K_{\omega_{0},(1-\beta)D^{T}}(\phi) & := & -\beta(I_{\eta_{0}}(\phi
)-J_{\eta_{0}}(\phi))-\frac{1}{V}\int_{M}H_{\omega_{0}}(d\mu_{0}-d\mu_{\phi
})\\
&  & +\frac{1}{V}\int_{M}\log\frac{\omega_{_{\phi}}^{n}\wedge\eta_{0}}%
{\omega_{0}^{n}\wedge\eta_{0}}d\mu_{\phi}.
\end{array}
\]
For a fixed $m,$ we define
\[%
\begin{array}
[c]{c}%
\overline{\rho}_{i}(x):=\frac{1}{m}\rho_{t_{i},m}(x):=\frac{1}{m}%
\mathcal{F}_{m}(x,t_{i}).
\end{array}
\]
Here $\mathcal{F}_{m}(x,t_{i})$ as in (\ref{58}). Then
\[%
\begin{array}
[c]{c}%
\omega_{i}=\overline{\omega}_{i}+\sqrt{-1}\partial_{B}\overline{\partial}%
_{B}\overline{\rho}_{i}.
\end{array}
\]
It follows that the log Sasaki-Mabuchi $K$-energy has the following explicit
expression%
\begin{equation}%
\begin{array}
[c]{ll}
& K_{\omega_{i},(1-\beta)D^{T}}(\overline{\omega}_{i})\\
= & \int_{M}\log\frac{\overline{\omega}_{i}^{n}}{\omega_{i}^{n}}%
\overline{\omega}_{i}^{n}\wedge\eta_{0}+\int_{M}H_{\omega_{i}}(\overline
{\omega}_{i}^{n}-\omega_{i}^{n})\wedge\eta_{0}\\
& -\sqrt{-1}\sum_{k=0}^{n-1}\frac{n-k}{n+1}\int_{M}(\partial_{B}\rho_{i}%
\wedge\overline{\partial}_{B}\overline{\rho}_{i}\wedge\omega_{i}^{k}%
\wedge\overline{\omega}_{i}^{n-k-1})\wedge\eta_{0}\\
\leq & \int_{M}\log\frac{\overline{\omega}_{i}^{n}}{\omega_{i}^{n}}%
\overline{\omega}_{i}^{n}\wedge\eta_{0}+\int_{M}H_{\omega_{i}}(\overline
{\omega}_{i}^{n}-\omega_{i}^{n})\wedge\eta_{0}.\\
\leq & \int_{M}\log\frac{\overline{\omega}_{i}^{n}}{\omega_{i}^{n}}%
\overline{\omega}_{i}^{n}\wedge\eta_{0}+C.
\end{array}
\label{57a}%
\end{equation}
Here $H_{\overline{\omega}_{i}}$ is the transverse conic Sasaki-Ricci
potential under the conic Sasaki-Ricci flow and we have used the Perelman
estimate that $|H_{\omega_{i}}|\leq C.$ On the other hand, it follows from the
partial $C^{0}$-estimate and applying the gradient estimate that
$\overline{\omega}_{i}\leq C\omega_{i}.$Therefore (\ref{57a}) implies
(\ref{57}) and then we are done.
\end{proof}

Now in view of the K\"{a}hler counterparts restricted on basic functions and
transverse K\"{a}hler structure, we can have some a priori estimates for the
minimizing solution $f_{t}$ of (\ref{53}) under the conic Sasaki-Ricci\ flow.
We refer to the details estimates as in \cite{l}, \cite[Proposition
4.2]{tzhu2} and \cite{psswe} which improved the estimates as in Proposition
\ref{UP1} if the transverse Sasaki-Mabuchi $K$-energy is bounded from below.

\begin{lemma}
(\cite{l}) If in addition the log Sasaki-Marbuchi $K$-energy is bounded from
below on the space of transverse K\"{a}hler potentials, then we have

\begin{enumerate}
\item
\[%
\begin{array}
[c]{c}%
\lim_{t\rightarrow\infty}|u(t)|_{C^{0}(M\backslash(1-\beta)D^{T})}=0,
\end{array}
\]

\item
\[%
\begin{array}
[c]{c}%
\lim_{t\rightarrow\infty}||\nabla^{T}u(t)||_{C_{B}^{0}(M\backslash
(1-\beta)D^{T},g_{t}^{T})}=0,
\end{array}
\]

\item
\[%
\begin{array}
[c]{c}%
\lim_{t\rightarrow\infty}|\Delta_{B}u(t)|_{C^{0}(M\backslash(1-\beta)D^{T}%
)}=0.
\end{array}
\]

\end{enumerate}
\end{lemma}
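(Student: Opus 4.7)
The plan is to combine the monotonicity of the log Sasaki-Mabuchi $K$-energy with the parabolic regularity estimates of Section~3 to promote $L^{2}$-decay of the conic transverse Ricci potential to $C^{0}$-decay, adapting the K\"ahler-Ricci flow arguments of Phong-Song-Sturm-Weinkove and Tian-Zhu to the transverse conic setting through the twisted approximation \eqref{2023-1}.

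First I would exploit Proposition \ref{P41}(1): along the conic Sasaki-Ricci flow,
\[
\frac{d}{dt}K_{\omega_{0},(1-\beta)D^{T}}(\phi(t)) \;=\; -\frac{1}{V}\int_{M}|\nabla^{T} u(t)|_{g(t)}^{2}\,d\mu_{\phi(t)}.
\]
Since the $K$-energy is non-increasing and bounded from below by hypothesis, the right-hand side is integrable in $t$, so $\int_{0}^{\infty}\!\int_{M}|\nabla^{T} u|^{2}\,d\mu\,dt<\infty$. Combined with the normalization $\frac{1}{V}\int_{M}e^{-u}d\mu=1$ and the uniform transverse Poincar\'e inequality (a consequence of the Sobolev inequality in Corollary \ref{C31}), this already forces $\|u(t)\|_{L^{2}(M,g(t))}\to 0$ along the full flow after one interpolates using the uniform $C^{1}$-bound of Proposition \ref{UP1}(3).

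Next, to pass from $L^{2}$-decay of $u$ to higher-order integral decay, I would feed the step-one integrability into Proposition \ref{UP2},
\[
\tfrac{1}{2}\square_{B}\bigl((\Delta_{B} u)^{2}\bigr) \;=\; -|\nabla^{T}\Delta_{B} u|^{2} + \Delta_{B} u\,\langle \rho^{T}_{\omega^{\epsilon}}-\theta_{\epsilon},\sqrt{-1}\partial_{B}\overline{\partial}_{B} u\rangle_{\omega^{\epsilon}(t)},
\]
and combine it with the integral bound on the conic transverse Ricci curvature from Theorem \ref{T31} and with the evolution computations for $|\nabla^{T} u|^{2}$ and $|\nabla^{T}\nabla^{T} u|^{2}$ in the style of Proposition \ref{UP4} and \eqref{U2}. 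Iterated integration in $t$ then yields $\int_{M}\bigl(|\nabla^{T} u|^{2}+|\Delta_{B} u|^{2}+|\nabla^{T}\nabla^{T} u|^{2}\bigr)\,d\mu \to 0$ as $t\to\infty$.

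Finally, to upgrade the integral decay to pointwise decay, I would run Moser iteration on the parabolic inequalities satisfied by $u^{2}$, $|\nabla^{T} u|^{2}$ and $(\Delta_{B} u)^{2}$, using the uniform Sobolev inequality \eqref{1} together with the scalar curvature and diameter bounds in Proposition \ref{UP1}. The main obstacle will be the conic singularity along $D^{T}$: standard Moser iteration requires $L^{\infty}$-bounded coefficients, while the twist $\theta_{\epsilon}$ blows up along $D^{T}$ as $\epsilon\to 0$. The way around this, as in the proofs of Section~3, is to carry out the iteration on the twisted flow $\omega^{\epsilon}$ with constants independent of $\epsilon$, exploiting the positivity $\sqrt{-1}\partial_{B}\overline{\partial}_{B}\log(\|S\|^{2}+\epsilon^{2})\geq 0$ in \eqref{2023-4} and the uniform integral bound on $\theta_{\epsilon}$ from Proposition \ref{UP3}, and only afterwards letting $\epsilon\to 0$ to obtain the three claimed $C^{0}$-limits on the smooth locus $M\setminus(1-\beta)D^{T}$.
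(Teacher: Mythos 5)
The paper never proves this lemma: it is quoted from Liu \cite{l}, and the surrounding text explicitly defers the detailed estimates to \cite{l}, \cite[Proposition 4.2]{tzhu2} and \cite{psswe}, transplanted to the transverse conic setting via the twisted flow. Measured against those sources, your outline is essentially their strategy: monotonicity of the log Sasaki--Mabuchi $K$-energy (Proposition \ref{P41}) plus the lower bound gives $\int_0^\infty\int_M|\nabla^T u|^2\,d\mu\,dt<\infty$; Perelman-type uniform estimates (Proposition \ref{UP1}), the uniform Sobolev inequality (Corollary \ref{C31}) and the normalization $\frac{1}{V}\int_M e^{-u}d\mu=1$ convert this into decay of $u$; and parabolic smoothing/Moser-type arguments upgrade the integral decay of $u$, $\nabla^T u$, $\Delta_B u$ to $C^0$ decay, everything being carried out on the twisted flow (\ref{2023-1}) with constants independent of $\epsilon$ and then passed to the limit away from $D^T$. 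Your identification of $\epsilon$-uniformity as the essential conic difficulty, together with the positivity of $\sqrt{-1}\partial_B\overline{\partial}_B\log(\Vert S\Vert_h^2+\epsilon^2)$ and the integral control of $\theta_\epsilon$ (Proposition \ref{UP3}, Theorem \ref{T31}), is exactly how \cite{lz} and \cite{l} handle it, so the route is sound.

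One step should be made precise rather than waved at: integrability in time of $Y(t):=\frac{1}{V}\int_M|\nabla^T u|^2\,d\mu$ only yields $Y(t_i)\to 0$ along a sequence, and ``interpolating with the uniform $C^1$-bound'' does not by itself promote this to a full limit. What the cited proofs actually establish is a derivative bound $|\frac{d}{dt}Y(t)|\le C$ (or $\frac{d}{dt}Y\le CY$) uniform in $\epsilon$ and $t$, which combined with $\int_0^\infty Y\,dt<\infty$ forces $Y(t)\to0$. The needed ingredients are available to you: from the trace of (\ref{U1}) one has $\Delta_B u_\epsilon=\beta n-(\mathrm{R}^{\mathrm{T}}-tr_{g}\theta_\epsilon)$, hence $|\Delta_B u_\epsilon|\le C$ by Proposition \ref{UP1}, and together with $\Vert u_\epsilon\Vert_{C^1}\le C$ and the uniform $L^2$-bound on $\nabla^T\overline{\nabla}^T u_\epsilon$ (Proposition \ref{UP3}) the time derivative of $Y$ is uniformly bounded. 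With that inserted, the rest of your scheme (Poincar\'e from the uniform Sobolev constant, the normalization forcing the average of $u$ to zero via the gradient bound and non-collapsing, and the parabolic iteration for $\nabla^T u$ and $\Delta_B u$) goes through as in \cite{psswe}, \cite{tzhu2} and \cite{l}.
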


\begin{corollary}
\label{c2022} If the log Sasaki-Marbuchi $K$-energy is bounded from below on
the space of transverse K\"{a}hler potentials, then there exists a sequence of
$t_{i}\in\lbrack i,i+1]$ such that

\begin{enumerate}
\item
\[%
\begin{array}
[c]{c}%
\lim_{t_{i}\rightarrow\infty}|\Delta_{B}f_{t_{i}}|_{L_{B}^{2}(M\backslash
(1-\beta)D^{T},g_{t_{i}}^{T})}=0,
\end{array}
\]

\item
\[%
\begin{array}
[c]{c}%
\lim_{t_{i}\rightarrow\infty}||\nabla^{T}f_{t_{i}}||_{L_{B}^{2}(M\backslash
(1-\beta)D^{T},g_{t_{i}}^{T})}=0,
\end{array}
\]

\item
\[%
\begin{array}
[c]{c}%
\lim_{t_{i}\rightarrow\infty}\int_{M}f_{t_{i}}e^{-f_{t_{i}}}\omega_{g_{t_{i}%
}^{T}}^{n}\wedge\eta_{0}=0.
\end{array}
\]
Moreover, we have
\begin{equation}%
\begin{array}
[c]{c}%
\lim_{t\rightarrow\infty}\lambda^{T}(g_{t}^{T})=(2\pi)^{-n}(2n)V=\sup
\{\lambda^{T}(\overline{g}^{T}):\overline{g}^{T}\in c_{1}^{B}(M,(1-\beta
)D^{T})\}.
\end{array}
\label{55}%
\end{equation}

\end{enumerate}
\end{corollary}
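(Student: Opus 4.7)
The plan is to exploit the Perelman-type monotonicity of $\lambda^T(g^T_t)$ along the conic Sasaki-Ricci flow, together with the $C^0$-decay of the transverse conic Ricci potential $u(t)$ supplied by the preceding lemma. The hypothesis that the log Sasaki-Mabuchi $K$-energy is bounded below, combined with Proposition \ref{P2024}, guarantees that $\lambda^T(g^T_t)$ is non-decreasing and bounded above by the Sasaki-Einstein value $(2\pi)^{-n}(2n)V$, hence converges. Consequently $\int_1^\infty \dot\lambda^T(g^T_t)\,dt < \infty$, and since the integrand is a sum of non-negative squared norms, the mean value theorem applied on each $[i, i+1]$ produces $t_i \in [i, i+1]$ with $\dot\lambda^T(g^T_{t_i}) \to 0$.

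The formula for $\dot\lambda^T$ in Proposition \ref{P2024} then yields, along this subsequence, the two weighted $L^2$-limits $\int_M |\mathrm{Ric}^T(\omega_{t_i}) - \theta + \overline{\nabla}^T \nabla^T f_{t_i} - \beta g^T_{t_i}|^2 e^{-f_{t_i}} d\mu_{t_i} \to 0$ and $\int_M |\nabla^T \nabla^T f_{t_i}|^2 e^{-f_{t_i}} d\mu_{t_i} \to 0$. Using the definition $\mathrm{Ric}^T(\omega_{t_i}) - \theta - \beta g^T_{t_i} = -\overline{\nabla}^T \nabla^T u_{t_i}$ of the transverse conic Ricci potential, the first limit becomes $\int_M |\overline{\nabla}^T \nabla^T (f_{t_i} - u_{t_i})|^2 e^{-f_{t_i}} d\mu_{t_i} \to 0$. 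Taking the transverse trace and invoking the $C^0$-decay $\Delta_B u_{t_i} \to 0$ from the preceding lemma establishes item $(1)$, once the uniform $C^0$ bound on $f_{t_i}$ (arising from the Euler-Lagrange equation for $\mathcal{W}^T_\theta$ together with Perelman's scalar curvature estimate in Proposition \ref{UP1}) is used to replace the weighted measure $e^{-f_{t_i}} d\mu_{t_i}$ with $\omega^n_{t_i} \wedge \eta_0$. Item $(2)$ then follows from the identity $\int_M |\nabla^T f_{t_i}|^2 e^{-f_{t_i}} d\mu_{t_i} = \int_M (\Delta_B f_{t_i})\, e^{-f_{t_i}} d\mu_{t_i}$, Cauchy-Schwarz, and the uniform volume bound.

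For item $(3)$ and the final assertion (\ref{55}), I would insert the estimates just obtained into the explicit formula $\lambda^T(g^T_{t_i}) = \mathcal{W}^T_\theta(g^T_{t_i}, f_{t_i})$: the gradient, Laplacian, and Hessian contributions all vanish in the limit, and the transverse scalar curvature converges via Proposition \ref{UP1} and the Ricci potential decay, so the only residual term is $\beta (2\pi)^{-n}\int_M f_{t_i} e^{-f_{t_i}} d\mu_{t_i}$, which is therefore pinned down by the already-known convergence $\lambda^T(g^T_{t_i}) \to (2\pi)^{-n}(2n)V$, yielding simultaneously item $(3)$ and the identification of the limit with $\sup \lambda^T$ over $c_1^B(M, (1-\beta)D^T)$. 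The principal technical obstacle is justifying the integration by parts and the passage from weighted to unweighted $L^2$-norms uniformly across the conic divisor $(1-\beta)D^T$, where neither $f_{t_i}$ nor $u_{t_i}$ is smooth; this is to be handled by first carrying out the argument on the regularized twisted Sasaki-Ricci flow of Section 3.1, where the $\epsilon$-uniform estimates established there legitimize all the manipulations, and then passing to the limit $\epsilon \to 0$ using the convergence result of the preceding subsection.
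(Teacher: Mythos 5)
Your treatment of items (1) and (2) follows the route the paper intends (it gives no written proof, deferring to \cite{l}, \cite[Proposition 4.2]{tzhu2} and \cite{psswe}): monotonicity of $\lambda^{T}$ from Proposition \ref{P2024}, boundedness above, the mean-value choice of $t_{i}\in[i,i+1]$, the substitution $\mathrm{Ric}^{\mathrm{T}}-\theta-\beta g^{T}=-\overline{\nabla}^{T}\nabla^{T}u$, the trace together with the $C^{0}$-decay of $\Delta_{B}u$ from the preceding lemma, and the weighted integration by parts $\int|\nabla^{T}f|^{2}e^{-f}d\mu=\int(\Delta_{B}f)e^{-f}d\mu$. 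Two caveats there: the uniform $C^{0}$-bound on the minimizers $f_{t_{i}}$, which you need to drop the weight $e^{-f_{t_{i}}}$, is a nontrivial input (it is exactly the content of the cited estimates for minimizers of the $\mathcal{W}$-functional, not an immediate consequence of Proposition \ref{UP1}); and the upper bound on $\lambda^{T}$ is not supplied by Proposition \ref{P2024} as you assert --- it requires a test-function computation, e.g.\ evaluating $\mathcal{W}^{T}_{\theta}$ at the normalized Ricci potential $u_{t}$ and using the decay lemma, or at a constant using $|\mathrm{R}^{\mathrm{T}}-tr\,\theta|\leq C$.

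The genuine gap is in item (3) and the identification (\ref{55}): your argument is circular. Monotonicity plus an upper bound only gives convergence of $\lambda^{T}(g_{t}^{T})$ to some limit $\lambda_{\infty}\leq(2\pi)^{-n}(2n)V$; the statement $\lambda_{\infty}=(2\pi)^{-n}(2n)V$ is not ``already known'' at the point where you invoke it. Indeed, once (1), (2) and $\mathrm{R}^{\mathrm{T}}-tr\,\theta\rightarrow n\beta$ are in hand, the explicit formula gives $\lambda^{T}(g_{t_{i}}^{T})=(2\pi)^{-n}(2n)V+\beta(2\pi)^{-n}\int_{M}f_{t_{i}}e^{-f_{t_{i}}}d\mu_{t_{i}}+o(1)$ with $\int f_{t_{i}}e^{-f_{t_{i}}}d\mu_{t_{i}}\leq0$ by Jensen under the normalization $\int e^{-f}d\mu=V$; so the vanishing of this entropy term and the identification of $\lambda_{\infty}$ are logically equivalent, and neither can be used to prove the other. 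What is missing is an independent proof that $\int_{M}f_{t_{i}}e^{-f_{t_{i}}}d\mu_{t_{i}}\rightarrow0$, for instance a Poincar\'{e}-type inequality with uniform constant (available from the uniform Sobolev bound of Corollary \ref{C31} together with the uniform bounds on $f_{t}$) applied with item (2) and the constraint $\int e^{-f_{t_{i}}}d\mu_{t_{i}}=V$ to force both the oscillation and the average of $f_{t_{i}}$ to zero; this is how the cited references (\cite{tzhu2}, \cite{psswe}, \cite{l}) close the argument. Only after that does $\lambda_{\infty}=(2\pi)^{-n}(2n)V$ follow, and the sup-characterization in (\ref{55}) then needs the further (easy but unaddressed) remark that for any $\overline{g}^{T}$ in the class one runs the flow from $\overline{g}^{T}$ and uses monotonicity to get $\lambda^{T}(\overline{g}^{T})\leq\lambda_{\infty}$. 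Your closing paragraph on regularization across $D^{T}$ is the right way to legitimize the integrations by parts, and matches the paper's strategy.
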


It follows from (\ref{53}), Proposition \ref{P42}, Corollary \ref{c2022} and
Theorem \ref{t2022} that $M_{\infty}$ must be conic Sasaki-Einstein. Then

\begin{theorem}
$(M,\xi,\eta_{0},g_{0},(1-\beta)D^{T})$ be a compact transverse log Fano
regular Sasakian manifold of dimension five and $(Z_{0}=M/\mathcal{F}_{\xi
},h_{0},\omega_{h_{0}},(1-\beta)D)$ be the space of leaves of the
characteristic foliation which is a log Fano projective K\"{a}hler manifold.
If $(M,(1-\beta)D^{T},L^{T})$ is transverse log $K$-polystable, then there
exists a conic Sasaki-Einstein metric on $(M,(1-\beta)D^{T})$.
\end{theorem}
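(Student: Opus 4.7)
The plan is to combine the compactness and regularity machinery built up in Sections~3--4 for the conic Sasaki-Ricci flow (\ref{2023}) with the transverse log $K$-polystability hypothesis to rule out any non-trivial soliton vector field on the limit. Start with the long-time solution $(M(t),\xi,\eta(t),g(t),(1-\beta)D^{T})$ produced by Theorem~\ref{T66}: along a suitable sequence $t_{i}\to\infty$, the flow converges in the Cheeger-Gromov sense, and smoothly on the regular set, to a gradient conic Sasaki-Ricci soliton orbifold $(M_{\infty},(1-\beta)D_{\infty}^{T})$ satisfying $\mathrm{Ric}^{\mathrm{T}}(g_{\infty}^{T}) - (1-\beta)[D_{\infty}^{T}] + \mathrm{Hess}^{T}(u_{\infty}) = \beta g_{\infty}^{T}$ with Hamiltonian holomorphic field $V_{\infty} = \nabla^{T}u_{\infty}$ on the regular part. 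Using the partial $C^{0}$-estimate of Theorem~\ref{T65} together with Corollary~\ref{C62}, $M_{\infty}$ embeds into $\mathbb{CP}^{N_{m}}$ as the central fiber of a transverse test configuration generated by a weighted $\mathbb{C}^{\ast}$-action $\sigma^{T}(t)\in SL^{T}(N_{m}+1,\mathbb{C})$ whose holomorphic generator is (up to scaling) the soliton field $V_{\infty}$.

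The second step is to activate the transverse log $K$-polystability. By Theorem~\ref{t2022} the hypothesis forces $K_{\omega_{0},(1-\beta)D^{T}}(\omega_{t})$ to be uniformly bounded from below along the flow; Corollary~\ref{c2022} then upgrades the basic estimates of Proposition~\ref{UP1} to
\[
\lim_{t\to\infty}\lambda^{T}(g_{t}^{T}) = (2\pi)^{-n}(2n)V,
\]
together with $L_{B}^{2}$-decay of $\nabla^{T}f_{t}$ and $\Delta_{B}f_{t}$ along a subsequence. Passing these to the Cheeger-Gromov limit identifies $g_{\infty}^{T}$ as a maximizer of Perelman's transverse $\lambda^{T}$-functional in $c_{1}^{B}(M,(1-\beta)D^{T})$, so that $u_{\infty}$ (up to an additive constant) realizes the Perelman minimizer $f_{\infty}$ of the $\mathcal{W}_{\theta_{\epsilon}}^{T}$-functional and satisfies the shrinking soliton equation (\ref{53}).

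The critical step is the Futaki obstruction. Using the third form of the log Sasaki-Donaldson-Futaki invariant, namely $f_{M_{\infty},(1-\beta)D_{\infty}^{T}}(V_{\infty}) = \int_{M_{\infty}} V_{\infty}(u_{\widehat{\omega}_{\infty}})\,\widehat{\omega}_{\infty}^{n}\wedge\eta_{\infty}$, and the identification of the conic transverse Ricci potential of the soliton with $u_{\infty}$ itself, a direct Sasakian adaptation of the classical soliton identity reduces this invariant to
\[
f_{M_{\infty},(1-\beta)D_{\infty}^{T}}(V_{\infty}) = \int_{M_{\infty}}|\nabla^{T}u_{\infty}|_{g_{\infty}^{T}}^{2}\,\widehat{\omega}_{\infty}^{n}\wedge\eta_{\infty},
\]
and applying the identity to $-V_{\infty}$ yields the negative of the same quantity. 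Transverse log $K$-polystability (Definition~\ref{d61}) applied to the one-parameter subgroup generated by $V_{\infty}$ in both orientations thus forces $|\nabla^{T}u_{\infty}|\equiv 0$; hence $u_{\infty}$ is constant and $g_{\infty}^{T}$ is a conic Sasaki-Einstein metric, $\mathrm{Ric}^{\mathrm{T}}(g_{\infty}^{T}) = \beta g_{\infty}^{T}+(1-\beta)[D_{\infty}^{T}]$. The equality clause of Definition~\ref{d61} further produces a transverse biholomorphism $M_{\infty}\simeq M$ which pulls this metric back to a conic Sasaki-Einstein metric on $(M,(1-\beta)D^{T})$.

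The principal obstacle is the rigorous matching, in the Sasakian setting, between the Cheeger-Gromov limit of the conic Sasaki-Ricci flow and the central fiber of a weighted $\mathbb{C}^{\ast}$-test configuration acting $\mathbb{S}^{1}$-equivariantly on the CR Kodaira embedding $\Psi_{|m(L^{T})|}$. One must argue, in the spirit of Donaldson-Sun and Tian-Wang~\cite{tw1}, but with the additional book-keeping imposed by the characteristic foliation and by the codimension-two orbifold singular set $\overline{\mathcal{S}}$, that the algebraic log Sasaki-Donaldson-Futaki invariant evaluated on this configuration coincides with the analytic soliton pairing above, and that its vanishing produces a genuine transverse biholomorphism $M_{\infty}\simeq M$ rather than merely a birational correspondence. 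All other ingredients, namely lower semicontinuity of $\lambda^{T}$, smooth convergence on the regular set, the uniform Sobolev inequality and the partial $C^{0}$-estimate, are already in place by the preceding sections.
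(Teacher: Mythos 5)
Your overall skeleton agrees with the paper at the ends: the flow limit from Theorem \ref{T66}, the partial $C^{0}$-estimate and Corollary \ref{C62} to make $M_{\infty}$ algebraic, and the final use of Definition \ref{d61} to force $M_{\infty}\simeq M$ once the log Sasaki--Futaki invariant of the central fiber vanishes. The gap is in your \emph{middle} step, where you try to prove that the limit soliton is Einstein by evaluating the Futaki invariant on $\pm V_{\infty}$. Two things are unjustified there. First, you assume that the degeneration of $M$ to $M_{\infty}$ is generated (up to scale) by the soliton field $V_{\infty}=\nabla^{T}u_{\infty}$; nothing in Theorems \ref{T61}--\ref{T65} identifies the generator of the weighted $\mathbb{C}^{\ast}$-action with the soliton field, and this identification is a deep point (of Chen--Sun--Wang type) that the paper never uses and does not establish. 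Second, and more fatally, Definition \ref{d61} only bounds $\operatorname{Re}f_{M_{\infty},(1-\beta)D_{\infty}^{T}}(V)$ for the central fiber of a degeneration of $M$; the reversed one-parameter subgroup degenerates $M$ (as $t\rightarrow 0$) to the $t\rightarrow\infty$ limit of the original orbit, which is in general a \emph{different} variety from $M_{\infty}$, so polystability does not give you $\operatorname{Re}f_{M_{\infty},(1-\beta)D_{\infty}^{T}}(-V_{\infty})\geq 0$. The ``both orientations'' trick is only legitimate for product configurations, i.e.\ precisely when $M_{\infty}=M$ --- which is what you are trying to prove. As written, the conclusion $|\nabla^{T}u_{\infty}|\equiv 0$ does not follow.

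The paper closes this step analytically rather than algebraically, and you already have the ingredients in your own outline: transverse log $K$-polystability gives the lower bound of the log Sasaki--Mabuchi $K$-energy along the flow (Theorem \ref{t2022}, via Paul's result, the cocycle identity and the partial $C^{0}$-estimate), and then the Lemma preceding Corollary \ref{c2022} yields $|u(t)|_{C^{0}}+\Vert\nabla^{T}u(t)\Vert_{C^{0}}+|\Delta_{B}u(t)|_{C^{0}}\rightarrow 0$, so the potential in the limiting soliton equation (\ref{c1})/(\ref{53}) is constant and $g_{\infty}^{T}$ is conic Sasaki--Einstein with no Futaki computation at all. The Futaki invariant and polystability are then used exactly once, as in the paper's proof: an Einstein central fiber has vanishing $\operatorname{Re}f_{M_{\infty},(1-\beta)D_{\infty}^{T}}$, while $M_{\infty}\neq M$ would force strict positivity, a contradiction, hence $M_{\infty}=M$. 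Replace your $\pm V_{\infty}$ argument by this $C^{0}$-decay step (which you only quoted in weakened $L^{2}$ form) and your proposal becomes essentially the paper's proof; keep it, and the argument has a genuine hole.
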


\begin{proof}
The Lie algebra of all Hamiltonian holomorphic vector fields is reductive
(\cite{fow}, \cite{cds1}, \cite{cds2}, \cite{cds3}, \cite{t5}). If $M_{\infty
}$ is not equal to $M$, there is a family of the weighted $\mathbb{C}^{\ast}%
$-action $\{G(s)\}_{s\in\mathbb{C}^{\ast}}\subset SL^{T}(N_{m}+1,\mathbb{C})$
such that\ $\Psi_{s}=G(s)\circ\Psi_{1}$ converges to the $\mathbb{S}^{1}%
$-equivariant embedding of $M_{\infty}$ with respect to the weighted
$\mathbb{C}^{\ast}$-action in $(\mathbb{CP}^{N_{m}},\omega_{FS}).$ Then the
log Sasaki-Donaldson-Futaki invariant $f_{M_{\infty},(1-\beta)D_{\infty}^{T}%
}(V)$ of $M_{\infty}$ vanishes. On the other hand, by the assumption that $M$
is transverse log $K$-polystable, if $M_{\infty}$ is not equal to $M$, then
\[%
\begin{array}
[c]{c}%
\operatorname{Re}f_{M_{\infty},(1-\beta)D_{\infty}^{T}}>0.
\end{array}
\]
This is a contradiction. Hence $M_{\infty}=M$. Therefore there is a conic
Sasaki-Einstein metric on $(M,(1-\beta)D^{T}).$
\end{proof}

\end{document}